\documentclass[twoside,11pt,reqno]{amsart}


%
%
%
%
%
%
%
%
%

\usepackage[margin=1.1in]{geometry}
\usepackage{amsmath} 
\usepackage{amssymb} 
\usepackage{amsopn}
\usepackage{amsthm} 
\usepackage{amsfonts} 
\usepackage{mathrsfs}
\usepackage{stmaryrd}
\usepackage{colonequals}
\usepackage{tikz-cd}
\usepackage{enumitem}


\usepackage{cite}

\usepackage{verbatim}

\usepackage[pdfpagelabels,pdftex]{hyperref}


\usepackage{setspace}
\setstretch{1.1}
\usepackage{amssymb}
\usepackage{euscript}
\usepackage{cite}
\usepackage[all]{xy}
\usepackage{tabularx}

\theoremstyle{plain}
\usepackage{amsfonts}
\usepackage{graphicx}
\usepackage{amsmath}




\hypersetup{
  pdftitle={},
  pdfauthor={},
  pdfsubject={},
  pdfkeywords={},
  colorlinks=false,
  breaklinks=true,
  bookmarksopen=true,
  bookmarksnumbered=true,
  pdfpagemode=UseOutlines,
  plainpages=false}


\DeclareMathOperator{\GL}{GL}
\DeclareMathOperator{\SL}{SL}
\DeclareMathOperator\ord{ord}

\DeclareMathOperator\Gal{Gal}

\DeclareMathOperator\Ind{Ind}
\DeclareMathOperator\Inf{Inf}

\DeclareMathOperator\Tr{Tr}

\DeclareMathOperator\Aut{Aut}

\newcommand{\sm}{{\,\smallsetminus\,}}

\newcommand\from{\colon}
\newcommand\bG{\mathbb G}
\newcommand\bQ{\mathbb Q}
\newcommand\bU{\mathbb U}
\newcommand\QQ{\mathbb Q}

\newcommand\bR{\mathbb R}
\newcommand\bT{\mathbb T}

\newcommand\bW{\mathbb W}
\newcommand\FF{\mathbb F}
\newcommand\cA{\mathcal A}

\newcommand\bZ{\mathbb Z}
\newcommand\bK{\mathbb K}

\newcommand\cT{\mathcal T}
\newcommand\sB{\mathscr{B}}
\newcommand\fX{\mathfrak{X}}
\newcommand\fp{\mathfrak{p}}
\newcommand\sA{\mathscr{A}}

\DeclareMathOperator\height{ht}
\newcommand\bN{\mathbb N}
\newcommand\bX{\mathbb X}
\newcommand\bB{\mathbb B}

\newcommand\bx{\mathbf{x}}
\newcommand\bH{\mathbb{H}}
\newcommand\rar{\rightarrow}

\newcommand\tar{\twoheadrightarrow}
\newcommand\cool{\overline{\bQ}_{\ell}}

\newcommand\cO{\mathcal O}

\makeatletter
\newtheorem*{rep@theorem}{\rep@title}
\newcommand{\newreptheorem}[2]{%
\newenvironment{rep#1}[1]{%
 \def\rep@title{#2 \ref{##1}}%
 \begin{rep@theorem}}%
 {\end{rep@theorem}}}
\makeatother

\newtheorem{thm}{Theorem}[section]
\newtheorem*{thm*}{Theorem}

\newtheorem{prop}[thm]{Proposition}

\newtheorem{cor}[thm]{Corollary}

\newtheorem*{cor*}{Corollary}
\newtheorem{lm}[thm]{Lemma}

\newtheorem{theorem}[thm]{Theorem}
\newtheorem{lemma}[thm]{Lemma}
\newtheorem{corollary}[thm]{Corollary}
\newtheorem{proposition}[thm]{Proposition}
\newtheorem{proposition-definition}[thm]{Proposition-Definition}
\newtheorem*{conj*}{Conjecture}

\theoremstyle{remark}
\theoremstyle{remark}\newtheorem*{claim*}{Claim}

\newreptheorem{thm}{Theorem}
\newreptheorem{prop}{Proposition}
\newreptheorem{cor}{Corollary}

\theoremstyle{definition}
\newtheorem{Def}[thm]{Definition}

\newtheorem{definition}[thm]{Definition}

\theoremstyle{remark}
\newtheorem{rem}[thm]{Remark}

\newenvironment{pro*}[1][Proof]{{\it{#1:}} }{}



\newcounter{absatzcounter}[section]
\setcounter{absatzcounter}{0}




\numberwithin{equation}{section}


\setcounter{tocdepth}{2}

\begin{document}

\title{Cohomological representations of parahoric subgroups}
\author{Charlotte Chan and Alexander Ivanov}
\address{Department of Mathematics \\
Princeton University \\
Fine Hall, Washington Road \\
Princeton, NJ 08544-1000 USA}
\email{charchan@math.princeton.edu}
\address{Mathematisches Institut \\ Universit\"at Bonn \\ Endenicher Allee 60 \\ 53115 Bonn, Germany}
\email{ivanov@math.uni-bonn.de}

\maketitle

\begin{abstract}
We generalize a cohomological construction of representations due to Lusztig from the hyperspecial case to  arbitrary parahoric subgroups of a reductive group over a local field which splits over an unramified extension. We compute the character of these representations on certain very regular elements.
\end{abstract}


\section{Introduction}

Let $k$ be a non-archimedean local field with finite residue field. Let $G$ be a reductive group over $k$, and let $T \subseteq G$ be a maximal torus defined over $k$ and split over an unramified extension of $k$. Let $P$ be a parahoric model of $G$, defined over the integers $\cO_k$. We denote the schematic closure of $T$ in $P$ again by $T$. We will construct and study a tower of varieties over an algebraic closure of the residue field $\FF_q$ of $k$ whose cohomology realizes interesting representations of $P(\cO_k)$ parametrized by characters of $T(\cO_k)$. This construction generalizes classical Deligne--Lusztig theory \cite{DeligneL_76} (for reductive groups over finite fields), as well as the work of Lusztig \cite{Lusztig_04} and Stasinski \cite{Stasinski_09} (for reductive groups over henselian rings). Further, we give an explicit formula for the character on certain very regular elements, generalizing a special case of the character formula for representations of reductive groups over finite fields \cite[Theorem 4.2]{DeligneL_76}.

More precisely, choose a Borel subgroup of $G$ containing $T$ (defined over some unramified extension of $k$) with unipotent radical $U$. Fix a Moy--Prasad filtration quotient $\bG$ of $P$, regarded as a smooth affine group scheme of finite type over $\FF_q$. As such, one has a Frobenius $\sigma \from \bG \to \bG$ and the corresponding Lang map $\bG \to \bG$, $g \mapsto g^{-1} \sigma(g)$. In $\bG$ we have the subgroups $\bT$ and $\bU$, corresponding to the closures of $T$ and $U$ in $P$.  Consider the subscheme $S_{T,U} \subset \bG$ defined as the preimage of $\bU$ under the Lang map. By construction, $S_{T,U}$ has a natural action of $P(\cO_k) \times T(\cO_k)$ given by left and right multiplication. For a smooth character $\theta \colon T(\cO_k) \rar \cool^\times$, we define $R_{T,U}^\theta$ to be the $\theta$-isotypic component of the alternating sum of the cohomology groups of $S_{T,U}$ with $\cool$-coefficients. This is a virtual $P(\cO_k)$-representation.

\begin{thm}[cf.\ Corollary \ref{cor:indep_of_U_irreduciblity}]\label{thm:1}
If $\theta$ is sufficiently generic, then $R_{T,U}^\theta$ is independent of the choice of $U$. Moreover, if the stabilizer of $\theta$ in the Weyl group of the special fiber of $P$ is trivial, then $\pm R_{T,U}^\theta$ is an irreducible representation of $P(\cO_k)$.
\end{thm}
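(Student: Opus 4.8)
The plan is to mimic the classical Deligne–Lusztig argument for irreducibility, transplanted to the parahoric setting via the Moy–Prasad quotient $\bG$. The first step is to reduce the independence-of-$U$ statement to a comparison between $S_{T,U}$ and $S_{T,U'}$ for two Borels containing $\bT$. Since $\bU$ and $\bU'$ are unipotent radicals of Borels of $\bG$ with common maximal torus $\bT$, I would interpolate via a sequence of Borels related by simple reflections, exactly as in \cite[\S1]{DeligneL_76}, and show that the corresponding Lang preimages have $P(\cO_k)\times T(\cO_k)$-equivariantly isomorphic cohomology after passing to $\theta$-isotypic parts, provided $\theta$ does not factor through the relevant coroot (this is the "sufficiently generic" hypothesis). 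The key geometric input is that for adjacent Borels the difference between $S_{T,U}$ and $S_{T,U'}$ is fibered in affine lines or $\bG_m$-bundles on which $T(\cO_k)$ acts through a nontrivial character, so the $\theta$-isotypic cohomology vanishes; this is where I expect to invoke a variant of the standard fibration lemma for Deligne–Lusztig varieties.

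For the irreducibility statement, the standard criterion is that a virtual representation $R$ is $\pm$(irreducible) iff $\langle R, R\rangle = 1$. So the second step is to compute the inner product $\langle R_{T,U}^\theta, R_{T,U}^\theta\rangle_{P(\cO_k)}$. By the usual adjunction / Frobenius reciprocity manipulations this reduces to computing the cohomology of a quotient of $S_{T,U} \times S_{T,U}$ by the diagonal $P(\cO_k)$-action, which one identifies with a Lang-type preimage inside $\bG$ governing the "intersection" of two Deligne–Lusztig-type varieties. Concretely, $\langle R_{T,U}^\theta, R_{T,U}^\theta\rangle$ should be expressible as a sum over the Weyl group $W = W(\bG, \bT)$ of contributions $\langle \theta, {}^w\theta\rangle$ weighted by the cohomology of certain pieces $S_w$; when the stabilizer of $\theta$ in $W$ is trivial, only the $w = 1$ term survives, and that term contributes exactly $1$. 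This is the heart of the matter and the step I expect to be the main obstacle: one must set up the right double-coset decomposition of $\bG$ refining the Bruhat decomposition, control the $T(\cO_k)$-action on each stratum, and show the non-identity strata contribute $0$ to the $\theta$-isotypic inner product — the subtlety compared to the finite-field case is that $\bT$ is not a torus but a Moy–Prasad quotient, so "genericity of $\theta$" must be quantified carefully to kill all the extra unipotent directions.

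Finally I would assemble the pieces: independence of $U$ (first step) lets me choose $U$ conveniently, the inner product computation (second step) gives $\langle R_{T,U}^\theta, R_{T,U}^\theta\rangle = 1$ under the triviality of the stabilizer, and hence $\pm R_{T,U}^\theta$ is irreducible. I would also note that the "sufficiently generic" hypothesis in the independence statement and the "trivial stabilizer" hypothesis are of the same flavor (the latter implies a suitable version of the former), so the two conclusions fit together into the single Corollary \ref{cor:indep_of_U_irreduciblity} referenced above.
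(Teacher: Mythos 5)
Your proposed step~2 --- computing $\langle R_{T,U}^\theta, R_{T,U}^\theta\rangle$ via a Bruhat-type decomposition of a quotient of $S_{T,U}\times S_{T,U}$ and showing the non-identity strata die on the $\theta$-isotypic parts --- is exactly the engine of the paper's proof (there it is Theorem~\ref{thm:altsum_coh_Sigma}, proved via $\Sigma = \bigsqcup_w \Sigma_w$ and the further refinement $\widehat\Sigma_w = \widehat\Sigma_w' \sqcup \widehat\Sigma_w''$). You correctly identify this as the hard part, and you also correctly flag the obstruction specific to the parahoric case (that $\bG_r^1$ is no longer abelian), which is indeed what drives Sections~\ref{sec:filtration_general}--\ref{sec:stratification_on_Kr1}.

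However, your step~1 is a misdirection. You do not need a separate interpolation through adjacent Borels with a fibration-in-lines argument; in fact it is unclear how to make that run in the parahoric setting, since for $r\geq 2$ the difference between $\bU_r$ and $\bU_r'$ is governed by the full Moy--Prasad filtration rather than by a single root line, and the paper does not attempt it. The point you are missing is that the inner product formula, proved in the generality of \emph{two distinct} pairs $(T,U)$ and $(T',U')$ as in Proposition~\ref{prop:Lusztig_23}, already gives independence of $U$ for free: the right-hand side $\#\{w\in W_{\bf x}(T,T')^\sigma : \theta\circ{\rm Ad}(w)=\theta'\}$ does not see $U$ or $U'$, so taking $T'=T$, $\theta'=\theta$, one gets $\langle R_{T,U}^\theta, R_{T,U}^\theta\rangle = \langle R_{T,U}^\theta, R_{T,U'}^\theta\rangle = \langle R_{T,U'}^\theta, R_{T,U'}^\theta\rangle$, hence $\langle R_{T,U}^\theta - R_{T,U'}^\theta,\, R_{T,U}^\theta - R_{T,U'}^\theta\rangle = 0$ and the two virtual representations coincide. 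So you should prove the inner product formula in the asymmetric form first (your step~2, but with two tori and two Borels), and then \emph{derive} both independence of $U$ and irreducibility from it, rather than trying to establish independence beforehand. This is precisely Lusztig's original route and the one the paper takes in Corollary~\ref{cor:indep_of_U_irreduciblity}.
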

The proof of Theorem \ref{thm:1} mainly follows the original method of Lusztig \cite{Lusztig_04}, which treated the special case where $P$ is reductive over $\cO_k$. Some technical issues arise in the general setting; these are treated in Sections \ref{sec:Preliminaries} and \ref{sec:Sigma}, especially Sections \ref{sec:filtration_general}, \ref{sec:preparations_coh_Sigmaprime} and \ref{sec:stratification_on_Kr1}.

Our second result is the computation of traces of \textit{unramified very regular} elements of $P(\cO_k)$ acting on $R_{T,U}^\theta$ (Definition \ref{def:veryRegularElements}). The proof is based on the Deligne--Lusztig fixed point formula \cite[Theorem 3.2]{DeligneL_76} and adapts ideas of \cite[Theorem 4.2]{DeligneL_76}.

\begin{thm}[cf.\ Theorem \ref{thm:traces}]\label{thm:2}
For any character $\theta \from T(\cO_k) \to \cool^\times$ and any unramified very regular element $g \in P(\cO_k)$,
\begin{equation*}
\Tr(g, R_{T,U}^\theta) = \sum_{w \in W_{\bx}(T, Z^\circ(g))^\sigma} (\theta \circ {\rm Ad}(w^{-1}))(g),
\end{equation*}
where the sum ranges over the finite set of $\sigma$-invariant elements in the principal homogeneous space $W_{\bx}(T, Z^\circ(g))$ under the Weyl group of the special fiber of $T$ in the special fiber of $P$ (Section \ref{sec:Weyl_Bruhat}).
\end{thm}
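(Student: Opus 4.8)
The plan is to compute $\Tr(g, R_{T,U}^\theta)$ via the Deligne–Lusztig fixed point formula applied to the action of the element $g \in P(\cO_k)$ on the variety $S_{T,U} \subset \bG$, together with its commuting action of $T(\cO_k)$ through which we extract the $\theta$-isotypic part. Concretely, since $g$ acts by left multiplication on $S_{T,U}$ and has finite order on the relevant Moy–Prasad quotient $\bG$ (being unramified very regular, its image is a semisimple element of $\bG(\overline{\FF}_q)$), one applies \cite[Theorem 3.2]{DeligneL_76} to the pair consisting of the semisimple part of $g$ and a topological generator of the group generated by $\theta$, reducing $\Tr(g, R_{T,U}^\theta)$ to a sum over the fixed-point locus $S_{T,U}^{g}$, weighted by $\theta$. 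So the first step is to identify this fixed locus explicitly.

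The second step, which is the technical heart, is the analysis of $S_{T,U}^g$. Here I would use the very regularity hypothesis on $g$: it forces the connected centralizer $Z^\circ(g)$ to be a torus-like subgroup whose special fiber is a maximal torus $\bT' = \bT(Z^\circ(g))$ of $\bG$, and any $h \in \bG$ with $h^{-1} g h \in \bT$ (up to the Lang condition landing in $\bU$) must conjugate $\bT'$ into $\bT$. The structure of such $h$ is governed precisely by the Weyl-coset $W_{\bx}(T, Z^\circ(g))$ from Section \ref{sec:Weyl_Bruhat}: the fixed points $S_{T,U}^g$ decompose into pieces indexed by $\sigma$-stable classes, and on each piece the residual variety is (after the unipotent part of the fixed-point analysis collapses, using that $\bU$ is unipotent and $g$ semisimple) cohomologically trivial — a point, or an affine space, contributing Euler characteristic $1$. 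This mirrors exactly the argument of \cite[Theorem 4.2]{DeligneL_76}, except that one must work with the parahoric Moy–Prasad quotient $\bG$ rather than a connected reductive group, so one needs the structure theory for $\bG$, $\bT$, $\bU$ developed in Sections \ref{sec:Preliminaries}–\ref{sec:Weyl_Bruhat}, particularly the description of the Weyl group and Bruhat-type decomposition of the special fiber.

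The third step is bookkeeping of the $\theta$-weights: on the fixed-point piece indexed by $w \in W_{\bx}(T, Z^\circ(g))^\sigma$, the element $g$ acts on the corresponding one-dimensional contribution through the character $\theta$ precomposed with the twist by $w$, i.e.\ by $(\theta \circ \mathrm{Ad}(w^{-1}))(g)$; summing over all such $w$ yields the stated formula. The fixed-point formula guarantees that the alternating sum over $i$ of traces on $H^i_c$ collapses to this sum with no cancellation, because each piece has trivial higher cohomology.

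The main obstacle I anticipate is Step 2: verifying that $S_{T,U}^g$ really has the clean stratified form described, in the generality of an arbitrary parahoric Moy–Prasad quotient. In the hyperspecial (reductive) case this is classical, but for general $\bG$ one must control the interaction between the Lang map, the unipotent subgroup $\bU$, and the non-reductive structure of $\bG$; in particular one must ensure that the pieces of the fixed locus coming from the filtration subquotients (the "$K_r$" pieces of Sections \ref{sec:stratification_on_Kr1}, \ref{sec:preparations_coh_Sigmaprime}) contribute trivially, which is where the very regularity of $g$ and the unramifiedness (so that $\sigma$ acts compatibly and $Z^\circ(g)$ is an unramified torus) are essential. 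Once the fixed locus is correctly identified, the rest is a direct application of \cite[Theorem 3.2]{DeligneL_76} and the combinatorics of $W_{\bx}(T, Z^\circ(g))$.
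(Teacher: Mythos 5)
Your overall strategy matches the paper's: apply the Deligne--Lusztig fixed point formula to the commuting $\breve G_r^\sigma \times \breve T_r^\sigma$-action on $S_{T,U}$ and identify the fixed locus as a disjoint union indexed by $w \in W_\bx(T, Z^\circ(g))^\sigma$. However, there are two substantive issues with how you propose to carry this out.

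First, your ``topological generator of the group generated by $\theta$'' does not make sense as stated and is not what the fixed point formula requires. The formula \cite[Theorem 3.2]{DeligneL_76} applies to a pair of \emph{commuting elements of coprime order}, one prime to $p$ and one of $p$-power order, acting on a scheme. The correct setup, as in the paper, is to write $\Tr(g, R_{T,U,r}^\theta)$ as an average $\frac{1}{\#\breve T_r^\sigma}\sum_{\tau \in \breve T_r^\sigma} \theta(\tau)^{-1}\Tr\bigl((g,\tau)^*;\,\sum_i(-1)^i H^i_c(S_{T,U})\bigr)$, and then, for each $\tau$, decompose the single element $(g,\tau) \in \breve G_r^\sigma \times \breve T_r^\sigma$ into a prime-to-$p$ part $(s,\zeta)$ and a $p$-power part $(t_1,\tau_1)$. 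This decomposition is available because the short exact sequence $1 \to (\breve T_r'{}^1)^\sigma \to \breve T_r'{}^\sigma \to \breve T_1'{}^\sigma \to 1$ splits, with $(\breve T_r'{}^1)^\sigma$ a $p$-group and $\breve T_1'{}^\sigma$ of order prime to $p$; applying this to $T' = Z^\circ(g)$ and $T' = T$ gives the decomposition. Also beware: it is not quite right to say the image of $g$ in $\bG_r$ is ``semisimple'' for $r\geq 2$, since $\bG_r$ is not reductive; the relevant dichotomy is prime-to-$p$ order versus $p$-power order.

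Second, and more importantly, you point to the wrong machinery for the fixed-point analysis. The stratification $\bK_r^1\smallsetminus\{1\} = \bigsqcup_{a,I}\bK_r^{a,*,I}$ and the commutator pairings of Sections \ref{sec:preparations_coh_Sigmaprime}--\ref{sec:stratification_on_Kr1} serve the proof of Theorem \ref{thm:altsum_coh_Sigma} (vanishing of the $z\neq 1$ stratum in $\widehat\Sigma_w$); they play no role in the trace formula. What is actually needed is a ``Borel-like'' statement: if $s\in\breve G_r$ is unramified very regular and $x\in\breve G_r$ satisfies $s\in x\breve B_r x^{-1}$, then $x$ lies in a \emph{unique} coset $\dot w \breve B_r$ with $w\in W_\bx(T,Z^0(s))$. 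This is proved by induction on the level $r$: the base case $r=1$ is classical Deligne--Lusztig (\cite[Proposition 4.4(ii)]{DeligneL_76}); the inductive step uses the Iwahori decomposition of the Moy--Prasad filtration subgroup $\breve G_r^{r-1}$ (\cite[Theorem 4.2]{MoyP_96}) together with the observation that for $t$ unramified very regular and $h\in\breve U_r$, the equation $tht^{-1}=h$ forces $h=1$ (this is where condition (iii) of Definition \ref{def:veryRegularElements} enters, via $\alpha(t)\not\equiv 1 \bmod \fp$). With this in hand one finds that the fixed locus $S_{T,U}^{(s,\zeta)}$, for each $w\in W_\bx(T,Z^0(g))^\sigma$ with $\zeta = {\rm Ad}(w^{-1})(s^{-1})$, is exactly the finite set $\dot w \breve T_r^\sigma$, not an affine space as your proposal suggests; the Euler characteristic of each stratum is $\#\breve T_r^\sigma$, which is cancelled by the averaging factor, and the remaining trace computation gives $(\theta\circ{\rm Ad}(w^{-1}))(g)$.
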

\enlargethispage*{\baselineskip}

When $G$ is any inner form of $\GL_n$ over $k$ and $T$ is an unramified maximal elliptic torus, we prove in \cite{CI_ADLV} that the semi-infinite Deligne--Lusztig set of Lusztig \cite{Lusztig_79} is a scheme and its cohomology realizes the compact induction to $G(k)$ of (an extension of) the $P(\cO_k)$-representations $R_{T,U}^\theta$. Already in this setting, it is not enough to study $R_{T,U}^\theta$ for reductive $P$; for example, when $G$ is an anisotropic modulo center inner form of $\GL_n$, the relevant parahoric is an Iwahori subgroup. This can occur even if $G$ is split: if $G = {\rm Sp}_4$, then there is a conjugacy class of maximal elliptic tori in $G$, such that the relevant $P$ is non-reductive, with the reductive quotient of the special fiber being isomorphic to $\SL_2 \times \SL_2$.

As such, we expect this work to be closely related to the problem of geometrically constructing representations of $p$-adic groups in general. More specifically, we expect that if $T$ is elliptic and $\theta \colon T(k) \rar \cool^\times$ is a sufficiently generic character, then the compact induction to $G(k)$ of (an extension of) the $P(\cO_k)$-representation $R_{T,U}^{\theta}$ is related to the supercuspidal representations constructed by Yu \cite{Yu_01}. Both the irreducibility of and the character formula for $R_{T,U}^\theta$ are crucial ingredients to understanding the corresponding $G(k)$-representation within the context of the local Langlands correspondence.





\subsection*{Acknowledgements}
The first author was partially supported by an NSF Postdoctoral Research Fellowship (DMS-1802905) and by the DFG via the Leibniz Prize of Peter Scholze. The second author was supported by the DFG via the Leibniz Preis of Peter Scholze.

\section{Preliminaries}\label{sec:Preliminaries}

\subsection{Notation}\label{sec:gen_not} We denote by $k$ a non-archimedean local field with residue field $\FF_q$ of prime characteristic $p$, and by $\breve{k}$ the completion of a maximal unramified extension of $k$. We denote by $\mathcal{O}_k$, $\mathfrak{p}_k$ (resp.\ $\cO$, $\mathfrak{p}$) the integers and the maximal ideal of $k$ (resp.\ $\breve k$). The residue field of $\breve k$ is an algebraic closure $\overline{\FF}_q$ of $\FF_q$. We write $\sigma$ for the Frobenius automorphism of $\breve k$, which is the unique $k$-automorphism of $\breve k$, lifting the $\FF_q$-automorphism $x \mapsto x^q$ of $\overline{\FF}_q$. Finally, we denote by $\varpi$ a uniformizer of $k$ (and hence of $\breve k$) and by $\ord = \ord_{\breve k}$ the valuation of $\breve k$, normalized such that $\ord(\varpi) = 1$.

If $k$ has positive characteristic, we let $\bW$ denote the ring scheme over $\FF_q$ where for any $\FF_q$-algebra $A$, $\bW(A) = A[\![\varpi]\!]$. If $k$ has mixed characteristic, we let $\bW$ denote the $k$-ramified Witt ring scheme over $\FF_q$ so that $\bW(\FF_q) = \cO_k$ and $\bW(\overline \FF_q) = \cO$. 
As the Witt vectors are only well behaved on perfect $\FF_q$-algebras, algebro-geometric considerations when $k$ has mixed characteristic are taken up to perfection. We fix the following convention. 

\medskip

\noindent \textbf{Convention.} If $k$ has mixed characteristic, whenever we speak of a scheme over its residue field $\FF_q$, we mean a \emph{perfect scheme}, that is a functor a set-valued functor on perfect $\FF_q$-algebras. 

\medskip


For results on perfect schemes we refer to \cite{Zhu_17, BhattS_17}. Note that passing to perfection does not affect the $\ell$-adic \'etale cohomology; thus for purposes of this paper, we could in principle pass to perfection in all cases. However, in the equal characteristic case working on non-perfect rings does not introduce complications, and we prefer to work in this slightly greater generality.

Fix a prime $\ell \neq p$ and an algebraic closure $\overline \QQ_\ell$ of $\QQ_\ell$. The field of coefficients of all representations is assumed to be $\overline \QQ_\ell$ and all cohomology groups throughout are compactly supported $\ell$-adic \'etale cohomology groups. 



\subsection{Group-theoretic data}\label{sec:groupth_data}

We let $G$ be a connected reductive group over $k$, such that the base change $G_{\breve k}$ to $\breve k$ is split. Let $T$ be a $k$-rational, $\breve k$-split maximal torus in $G$. 
Let $\sB_{\breve k}$ and $\sB_k$ denote the Bruhat--Tits building of the adjoint group of $G$ over $\breve k$ and over $k$, and let $\sA_{T, \breve k} \subseteq \sB_{\breve k}$ denote the apartment of $T$. Note that there is a natural action of $\Aut(\breve k/k) = \langle \sigma \rangle$ on $\sB_{\breve k}$ and on $\sA_{T,\breve k}$, and that $\sB_k = \sB_{\breve k}^{\langle\sigma\rangle}$. 

Let $X^{\ast}(T)$ and $X_{\ast}(T)$ denote the group of characters and cocharacters of $T$. We denote by $\langle \cdot , \cdot \rangle \colon X^{\ast}(T) \times X_{\ast}(T) \rightarrow \bZ$ the natural $\bZ$-linear pairing between them. We extend it to the uniquely determined $\bR$-linear pairing $\langle \cdot , \cdot \rangle \colon X^{\ast}(T)_{\bR} \times X_{\ast}(T)_{\bR} \rightarrow \bR$, where we write $M_{\bR} = M \otimes_{\bZ} \bR$ for a $\bZ$-module $M$.

Denote by $\Phi$ the set of roots of $T$ in $G_{\breve k}$ and for a root $\alpha \in \Phi$ let $U_{\alpha} \subseteq G_{\breve k}$ denote the corresponding root subgroup. There is an action of $\langle \sigma \rangle$ on $\Phi$. Fix a Chevalley system $u_{\alpha} \colon \bG_a \stackrel{\sim}{\rightarrow} U_{\alpha}$ for $G_{\breve k}$ (cf. e.g. \cite[4.1.3]{BruhatT_84}). To any root $\alpha \in \Phi$ we can attach the valuation $\varphi_{\alpha} \colon U_{\alpha}(\breve k) \rightarrow \bZ$ given by $\varphi_{\alpha}(u_{\alpha}(y)) = \ord(y)$. The set of valuations $\{\varphi_{\alpha}\}_{\alpha \in \Phi}$ defines a point ${\bf x}_0$ in the apartment $\sA_{T, \breve k}$. Moreover $\sA_{T,\breve k}$ is an affine space under $X_{\ast}(T)_{\bR}$ and the point ${\bf x}_0 + v \in \sA_{T,\breve k}$ for $v \in X_{\ast}(T)_{\bR}$ corresponds to the valuations $\{ \widetilde \varphi_{\alpha} \}_{\alpha \in \Phi}$ of the root datum given by $\widetilde\varphi_{\alpha}(u) = \varphi_{\alpha}(u) + \langle \alpha, v \rangle$ (see \cite[6.2]{BruhatT_72}).

We let $U, U^-$ be the unipotent radicals of two opposite $\breve k$-rational Borel subgroups of $G_{\breve k}$ containing $T$. 

\subsection{Affine roots and filtration on the torus}
We have the set $\Phi_{\rm aff}$ of affine roots of $T$ in $G_{\breve k}$. It is the set of affine functions of $\sA_{T,\breve k}$ defined as 
\[
\Phi_{\rm aff} = \{ {\bf x} \mapsto \alpha({\bf x} - {\bf x}_0) + m \colon \alpha \in \Phi, m \in \bZ \}.
\]
Denote the affine root $(\alpha,m) \from {\bf x} \mapsto \alpha({\bf x} - {\bf x}_0) + m$ and call $\alpha$ its vector part. We have the affine root subgroups $\breve U_{\alpha,m} \subseteq U_{\alpha}(\breve k)$, defined by 
\[
\breve U_{\alpha,m} = \{ u \in U_{\alpha}(\breve k) \colon u = 1 \text{ or } \varphi_{\alpha}(u) \geq m \}
\]
They define a descending separated filtration of $U_{\alpha}(\breve k)$. There is a natural action of Frobenius $\sigma$ on the set of affine roots. We make it explicit:
\begin{lm}\label{lm:FrobeniusOnAffineRoots}
Let $(\alpha,m) \in \Phi_{\rm aff}$. Then $\sigma(\alpha,m) = (\sigma(\alpha), m - \langle \alpha , \sigma({\bf x}_0) - {\bf x}_0 \rangle )$. 
\end{lm}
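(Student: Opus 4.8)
The plan is to compute $\sigma(\breve U_{\alpha,m})$ directly and to read off the two data in $\sigma(\alpha,m)$ --- its vector part and its constant term --- from how $\sigma$ moves the valuation $\varphi_\alpha$. The vector part is immediate: $\sigma$ permutes the root subgroups of $T$ in $G_{\breve k}$ with $\sigma(U_\alpha(\breve k)) = U_{\sigma(\alpha)}(\breve k)$, so $\sigma(\breve U_{\alpha,m}) \subseteq U_{\sigma(\alpha)}(\breve k)$ and the vector part of $\sigma(\alpha,m)$ is $\sigma(\alpha)$. All the content is in the constant $c$ for which $\varphi_{\sigma(\alpha)}(\sigma(u)) = \varphi_\alpha(u) + c$ for all $u \in U_\alpha(\breve k)$; such a $c$ exists independently of $u$ and of $m$, and then $\sigma(\breve U_{\alpha,m}) = \breve U_{\sigma(\alpha),\, m+c}$, i.e.\ $\sigma(\alpha,m) = (\sigma(\alpha),\, m+c)$.

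To find $c$ I would compare Chevalley parametrizations. Applying $\sigma$ coefficient-wise to $u_\alpha \colon \bG_a \stackrel{\sim}{\rightarrow} U_\alpha$ yields an isomorphism $\bG_a \stackrel{\sim}{\rightarrow} U_{\sigma(\alpha)}$ equivariant for the $T$-action via the character $\sigma(\alpha)$; since $u_{\sigma(\alpha)}$ has the same equivariance (and the only such isomorphisms differ by a scalar), we get $\sigma(u_\alpha(y)) = u_{\sigma(\alpha)}(c_\alpha\,\sigma(y))$ for some $c_\alpha \in \breve k^\times$. Because $\sigma$ preserves $\ord$, this gives $\varphi_{\sigma(\alpha)}(\sigma(u)) = \varphi_\alpha(u) + \ord(c_\alpha)$, hence $c = \ord(c_\alpha)$ and $\sigma(\alpha,m) = (\sigma(\alpha),\, m + \ord(c_\alpha))$. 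It then remains to identify $\ord(c_\alpha)$ with $-\langle \alpha, \sigma({\bf x}_0) - {\bf x}_0\rangle$. For this I use the description of $\sA_{T,\breve k}$ recalled in Section~\ref{sec:groupth_data}: ${\bf x}_0$ is by definition attached to the valuation datum $\{\varphi_\alpha\}_\alpha$, and ${\bf x}_0 + v$ is attached to $\{\varphi_\alpha + \langle \alpha, v\rangle\}_\alpha$. On the other hand, since $\sigma$ acts on the building compatibly with its action on $G(\breve k)$, the point $\sigma({\bf x}_0)$ is attached to the $\sigma$-transport of $\{\varphi_\alpha\}_\alpha$, and the relation $\varphi_{\sigma(\alpha)}(\sigma(u)) = \varphi_\alpha(u) + \ord(c_\alpha)$ expresses this transported datum in terms of the $\ord(c_\alpha)$. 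Writing $\sigma({\bf x}_0) = {\bf x}_0 + (\sigma({\bf x}_0) - {\bf x}_0)$ and equating the two descriptions of its valuation datum pins down $\ord(c_\alpha)$ and yields the formula.

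The step I expect to be the main obstacle is precisely this last matching: one has to carry the semilinear $\sigma$ through carefully, and in particular keep straight whether the pairing that finally appears is against $\alpha$ or against $\sigma(\alpha)$ --- these differ by $\langle(\sigma-1)\alpha,\ \sigma({\bf x}_0)-{\bf x}_0\rangle$, which one handles using the adjointness of the $\sigma$-actions on $X^{\ast}(T)$ and $X_{\ast}(T)$ together with the fact that $\sigma$ acts on $\sA_{T,\breve k}$ as an affine map with linear part the standard action on $X_{\ast}(T)_{\bR}$. That last fact also gives a shorter alternative: since $\Phi_{\rm aff}$ consists of affine functions on $\sA_{T,\breve k}$ and $\sigma$ acts on them by precomposition with $\sigma^{-1}$, one can substitute $\sigma^{-1}({\bf x})$ into ${\bf x}\mapsto\alpha({\bf x}-{\bf x}_0)+m$ and expand via $\sigma^{-1}({\bf x})-{\bf x}_0=\sigma^{-1}\bigl({\bf x}-\sigma({\bf x}_0)\bigr)$; then the only remaining point is the compatibility of this action on $\Phi_{\rm aff}$ with the one on affine root subgroups used above.
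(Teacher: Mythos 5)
Your proposal is correct, and the ``shorter alternative'' you sketch at the end is precisely the paper's proof: treat $(\alpha,m)$ as an affine function on $\sA_{T,\breve k}$, act by precomposition with $\sigma^{-1}$, expand $\sigma^{-1}({\bf x})-{\bf x}_0=\sigma^{-1}\bigl({\bf x}-\sigma({\bf x}_0)\bigr)$, and use $\langle\alpha,\sigma^{-1}(v)\rangle=\langle\sigma(\alpha),v\rangle$ to read off the constant. Your primary route via Chevalley parametrizations is a genuinely different (and more explicit) path to the same fact: it pins down the scalar $c_\alpha$ with $\sigma(u_\alpha(y))=u_{\sigma(\alpha)}(c_\alpha\sigma(y))$ from $T$-equivariance, converts it to $\varphi_{\sigma(\alpha)}(\sigma(u))=\varphi_\alpha(u)+\ord(c_\alpha)$, and then compares the $\sigma$-transport of the valuation datum $\{\varphi_\alpha\}$ with the datum of ${\bf x}_0+v$ to identify $\ord(c_\alpha)$. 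What that buys you is a concrete description of $\sigma$ on each affine root subgroup (which is sometimes handy, e.g.\ in verifying $\sigma$-stability of $\breve P^r_{\bf x}$), at the cost of an extra comparison of parametrizations; the paper's one-line substitution is shorter when one only needs the affine-root formula.

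One substantive point you should settle rather than flag as an ``obstacle'': carrying either argument through, the constant you get is
\[
m' \;=\; m - \langle \sigma(\alpha),\, \sigma({\bf x}_0) - {\bf x}_0\rangle \;=\; m + \langle \alpha,\, \sigma^{-1}({\bf x}_0) - {\bf x}_0\rangle,
\]
i.e.\ the pairing in the constant term is against $\sigma(\alpha)$, not against $\alpha$. That is indeed what the paper's own proof concludes. The difference $\langle(\sigma-1)\alpha,\ \sigma({\bf x}_0)-{\bf x}_0\rangle$ you mention is not something one can ``handle'' away---it is simply nonzero in general, unless one chooses the Chevalley system (hence ${\bf x}_0$) to be $\sigma$-invariant. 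So the displayed statement of the lemma, which has $\langle\alpha,\sigma({\bf x}_0)-{\bf x}_0\rangle$, is at odds with its own proof and with both of your computations; you should trust the version with $\sigma(\alpha)$ and regard the displayed formula as a typo.
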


\begin{proof}
We have $\sigma(\alpha,m) = (\sigma(\alpha),m')$ for some $m' \in \bZ$. The evaluation of the affine-linear form $(\alpha,m)$ on the apartment $\cA_{T,\breve k}$ is $\sigma$-linear, thus we have for all ${\bf x} \in \cA_{T,\breve k}$: 
\[
\sigma(\alpha,m)({\bf x}) = (\alpha,m)(\sigma^{-1}({\bf x})) = \langle \alpha, \sigma^{-1}({\bf x}) - {\bf x}_0 \rangle + m
= \langle \sigma(\alpha), {\bf x} - {\bf x}_0 \rangle + m - \langle \sigma(\alpha), \sigma({\bf x}_0) - {\bf x}_0 \rangle. \]
On the other side, $(\sigma(\alpha),m')({\bf x}) = \langle \sigma(\alpha), {\bf x} - {\bf x}_0 \rangle + m'$, whence the lemma.
\end{proof}

Let $\widetilde{\bR} = \bR \cup \{ r+ \colon r \in \bR\} \cup \{\infty \}$ denote the ordered monoid as in \cite[6.4.1]{BruhatT_72}. Let $\breve T^0 \subseteq T(\breve k)$ be the maximal bounded subgroup. 
For $r \in \widetilde{\bR}_{\geq 0} \sm \{\infty \}$, we have a descending separated filtration of $\breve T^0$ given by
\[
\breve{T}^r = \{t \in \breve T^0 \colon \ord(\chi(t) - 1) \geq r \,\,\forall \chi \in X^{\ast}(T) \}. \]

\subsection{Parahoric subgroups, Moy--Prasad filtration and integral models}\label{sec:parahorics}
Fix a point ${\bf x} \in \sA_{T,\breve k}$. Following Bruhat and Tits \cite[5.2.6]{BruhatT_84}, there is a parahoric group scheme $P_{\bf x}$ over $\cO$ attached to ${\bf x}$, with generic fiber $G$, and with connected special fiber.
The group $\breve P_{\bf x} \colonequals P_{\bf x}(\cO)$ is generated by $\breve T^0$ and $\breve U_{\alpha,m}$ for all $(\alpha,m) \in \Phi_{\rm aff}$ such that $\langle \alpha, {\bf x} - {\bf x_0} \rangle \geq -m$ (that is, $(\alpha,m)({\bf x}) \geq 0$). The schematic closure of $T$ in $P_{{\bf x}}$ is the connected N\'eron model of $T$. We denote it again by $T$. We have $T(\cO) = \breve T^0$. (As $G_{\breve k}$ is split, condition (T) of \cite[8.1]{Yu_02} is satisfied. The claim about the closure of $T$ in $P_{\bf x}$ follows e.g. from \cite[Corollary 8.6(ii)]{Yu_02}. Again, because $G_{\breve k}$ is split, it also follows (\cite[4.6.1]{BruhatT_84}) that the connected N\'eron model of $T$ is equal to the maximal subgroup scheme of finite type of the lft model of $T$. The $\cO$-points of the latter are equal to $\breve T^0$, hence we indeed have $T(\cO) = \breve T^0$.)

The Moy--Prasad filtration on $\breve P_{\bf x}$ is given by the series of normal subgroups $\breve P_{{\bf x}}^r \subseteq \breve P_{{\bf x}}$ $(r \in \widetilde{\bR}_{\geq 0} \sm \{\infty\})$, generated by $\breve{T}^r$ and $\breve{U}_{(\alpha,m)}$ for all $(\alpha,m) \in \Phi_{\rm aff}$ such that $\langle \alpha, {\bf x} - {\bf x}_0 \rangle \geq r-m$. By \cite[8.6 Corollary]{Yu_02}, there is an unique smooth $\cO$-model $P_{{\bf x}}^r$ of $G$, such that $P_{{\bf x}}^r(\cO) = \breve P_{{\bf x}}^r$. Moreover, part (ii) of the same corollary describes the schematic closures of $U_{\alpha}$, $T$ in $P_{{\bf x}}^r$, and in particular, we have
\begin{equation}\label{eq:intersection_of_P_and_root} 
\breve P_{\bf x}^r \cap U_{\alpha}(\breve k) = \breve U_{\alpha,\lceil r - \langle \alpha, {\bf x} - {\bf x}_0 \rangle \rceil} \quad \text{and} \quad \breve P_{\bf x}^r \cap T(\breve k) = \breve T^r.
\end{equation}
Note that for $r\in \bR_{\geq 0}$, we have $\breve P_{{\bf x}}^{r+} = \bigcup_{s \in \bR, s>r} \breve P_{{\bf x}}^s$. For further properties of the Moy--Prasad filtration we refer to \cite[\S2.6]{MoyP_94}, and for further properties of the smooth models $P_{{\bf x}}^r$ we refer to \cite{Yu_02}.

Assume now that ${\bf x} \in \sA_{T,\breve k} \cap \sB_k$. Then all group schemes $P_{\bf x}$, $P_{\bf x}^r$ descend to smooth group schemes over $\cO_k$, again denoted by $P_{\bf x}$, $P_{\bf x}^r$ (cf. \cite[\S9.1]{Yu_02}). In particular, all groups $\breve P_{{\bf x}}^r$ ($r \geq 0$) are $\sigma$-stable (this can also be deduced from Lemma \ref{lm:FrobeniusOnAffineRoots}, which shows that $\sigma$ maps $\breve U_{\alpha,\lceil r - \langle \alpha, {\bf x} - {\bf x}_0 \rangle \rceil}$ isomorphically onto $\breve U_{\sigma(\alpha),\lceil r - \langle \sigma(\alpha), {\bf x} - {\bf x}_0 \rangle \rceil}$), and $P_{\bf x}(\cO_k) = \breve P_{\bf x}^\sigma$ and $P_{\bf x}^r(\cO_k) = (\breve P_{\bf x}^r)^\sigma$. 

\subsection{Moy--Prasad quotients}\label{sec:groups_positive_loops}
For a scheme $\fX$ over $\cO_k$ (resp. over $\cO$), the functor of positive loops $L^+ \fX$ is the functor on $\FF_q$-algebras (resp. $\overline{\FF}_q$-algebras) given by 
\[
L^+\fX(R) = \fX(\bW(R)) 
\]
If $\fX$ is affine and of finite type, then $L^+\fX$ is represented by an affine scheme. 

Let ${\bf x} \in \sA_{T,\breve k} \cap \sB_k$ be as in Section \ref{sec:parahorics}. We have the infinite-dimensional affine $\FF_q$-group scheme $L^+P_{\bf x}$, and will now introduce convenient finite-dimensional quotients of it. Let $r\in \bZ_{\geq 1}$. We consider the fpqc quotient sheaf $\bG_r := L^+P_{\bf x}/L^+P_{\bf x}^{(r-1)+}$. By \cite[Proposition 4.2(ii)]{CI_ADLV} it is representable by a smooth affine group scheme over $\FF_q$ of finite type, which we again denote by $\bG_r$. From \cite[Theorem 8.8]{Yu_02}, along with the fact that $L^+P_{\bf x}^{(r-1)+}$ is pro-unipotent, it follows by taking Galois cohomology,
\[
\breve G_r \colonequals \bG_r(\overline{\FF}_q) = \breve P_{{\bf x}}/\breve{P}_{{\bf x}}^{(r-1)+} \quad \text{and } \quad \breve G_r^\sigma = \bG_r(\FF_q) = (\breve P_{{\bf x}}/\breve P_{{\bf x}}^{(r-1)+})^\sigma.
\]

For $r\geq s\geq 1$ we have natural surjections of $\FF_q$-groups $L^+P_{\bf x} \rar \bG_r \rar \bG_s$.  We write $\bG_r^s = \ker(\bG_r \tar \bG_s)$ and $\breve G_r^s \colonequals \bG_r^s(\overline \FF_q)$. Moreover, we also have natural surjections $\bG_2 \rar P_{\bf x} \otimes_{\cO_k} \FF_q \rar (P_{\bf x} \otimes_{\cO_k} \FF_q)^{\rm red } = \bG_1$ identifying $\bG_1$ with the reduced quotient of the special fiber of $P_{\bf x}$ and of each $\bG_r$. 


\subsection{Subgroups of $\bG_r$}\label{sec:subgroups_of_bGr}
Let $H \subseteq G_{\breve k}$ be a closed subgroup scheme. Let $r \in \bZ_{\geq 1}$. We will attach to $H$ the subgroup $\bH_r \subseteq \bG_{r, \overline{\FF}_q}$ as follows. The schematic closure $H_{\bf x}$ of $H$ in $P_{{\bf x}, \cO}$ is flat (by \cite[1.2.6]{BruhatT_72} as $\cO$-flat is equivalent to $\cO$-torsion free). It follows that $H_{\bf x}$ is a closed subgroup scheme of $P_{{\bf x}, \cO}$ (\cite[1.2.7]{BruhatT_72}). Apply $L_r^+$ to the inclusion $H_{\bf x} \subseteq  P_{\bf x,\cO}$ to obtain the subgroup scheme $L_r^+H_{\bf x} \subseteq L_r^+P_{\bf x,\cO}$. The last inclusion is a closed immersion (e.g. by \cite[Corollary 2 on p. 639]{Greenberg_61}). We define the closed subgroup scheme $\bH_r \subseteq \bG_{r,\overline{\FF}_q}$ as the image of $L_r^+H_{\bf x}$ under $L_r^+P_{\bf x,\cO} \tar \bG_{r,\overline{\FF}_q}$. We write $\bH_r^s \colonequals \ker(\bH_r \rightarrow \bH_s)$ and $\bH_r^{s,\ast} \colonequals \bH_r^s \smallsetminus \bH_r^{s+1}$. 

Suppose now additionally that $H_{\bf x}$ is smooth. Then $L_r^+H_{\bf x}$ is reduced (one could e.g. use \cite[Corollary 2 on p. 264]{Greenberg_63}), and hence $\bH_r$ is too. If $H$ is already defined over the finite subextension of $\breve k/k$ of degree $d$, then $H_{\bf x}$ is defined over the integers of this subextension. This implies that  $\bH_r(\overline{\FF}_q)$ is stable under the action of $\sigma^d$. Hence $\bH_r$ is defined over $\FF_{q^d}$ (here we use that $\bH_r$ is reduced separated scheme of finite type over $\FF_q$). 

Using the procedure described above we obtain the closed $\FF_q$-subgroup $\bT_r \subseteq \bG_r$ attached to $T \subseteq G$. Analogously, we have the subgroups $\bU_r, \bU_r^- \subseteq \bG_{r,\overline{\FF}_q}$ corresponding to $U,U^- \subseteq G_{\breve k}$ and for any root $\alpha \in \Phi$ the subgroup $\bU_{r,\alpha} \subseteq \bG_{r,\overline{\FF}_q}$ corresponding to $U_{\alpha}$. Note that all these are reduced connected closed subgroups of $\bG_{r,\overline{\FF}_q}$. Moreover, $\bU_{r,\alpha}$ is defined over $\FF_{q^d}$ where $d \in \bZ_{\geq 1}$ is the smallest positive integer such that $\sigma^d(\alpha) = \alpha$ in $\Phi$ (indeed the group $U_{\alpha,{\bf x}}$ is smooth by \cite[8.3 Theorem (ii)]{Yu_02}), and a similar statement holds for $\bU_r, \bU_r^-$.

For any reduced $\overline{\FF}_q$-subscheme $\bX_r \subseteq \bG_{r, \overline \FF_q}$, we define $\breve X_r \colonequals \bX_r(\overline \FF_q) \subseteq \bG_r(\overline \FF_q) = \breve G_r$. Thus for example we write $\breve U_{\alpha,r}^a = \bU_{\alpha,r}^a(\overline{\FF}_q)$ for $\alpha \in \Phi$ and $1\leq a \leq r-1$. Following Lusztig, we denote by $\cT$ the groups $\bT_r^{r-1}$. For $\alpha \in \Phi$, let $T^\alpha \subset T_{\breve k} \subset G_{\breve k}$ be the unique $1$-dimensional torus contained in the subgroup of $G_{\breve k}$ generated by $U_\alpha$ and $U_{-\alpha}$; let $\bT_r^\alpha$ be the corresponding subgroup scheme of $\bG_{r,\overline{\FF}_q}$ and write $\cT^\alpha \colonequals \bT_r^{\alpha,r-1}$.

\begin{lm}\label{lm:GGr_generated_by_roots} Let $r \in \bZ_{\geq 1}$ and $1\leq a \leq r-1$. 
\begin{itemize}
\item[(i)] The group $\bG_r$ is generated by $\bT_r$ and all $\bU_{\alpha,r}$ ($\alpha \in \Phi$). 
\item[(ii)] The group $\bG_r^{a+1}$ is generated by $\bT_r^{a+1}$ and all $\bU_{\beta,r}^{a+1}$ ($\alpha \in \Phi$)
\end{itemize}
\end{lm}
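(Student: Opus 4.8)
The plan is to reduce both statements to the structure of $\bG_r$ as an iterated extension along the Moy--Prasad filtration, using the fact (from Section \ref{sec:subgroups_of_bGr}) that each $\bG_r^{s}/\bG_r^{s+1}$ is a commutative group on which the root-subgroup pieces $\bU_{\alpha,r}^{s}$ and the torus piece $\bT_r^{s}$ act compatibly with the corresponding decomposition coming from \eqref{eq:intersection_of_P_and_root}. Concretely, for part (i), I would first treat $r=1$: here $\bG_1$ is the reduced special fiber of the parahoric $P_{\bf x}$, which is a connected reductive group (or rather its reductive quotient together with the split torus $\bT_1$), and the claim that it is generated by a maximal torus and the root subgroups is the standard Bruhat decomposition / big-cell statement for connected reductive groups over an algebraically closed field. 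This is where Lemma \ref{lm:GGr_generated_by_roots}(i) is genuinely a theorem about reductive groups rather than a filtration bookkeeping exercise.

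Next, for the inductive step on $r$, observe there is a short exact sequence $1 \to \bG_r^{r-1} \to \bG_r \to \bG_{r-1} \to 1$. By induction, $\bG_{r-1}$ is generated by (the images of) $\bT_{r-1}$ and the $\bU_{\alpha,r-1}$, and these lift to $\bT_r$ and $\bU_{\alpha,r}$ inside $\bG_r$; hence the subgroup $H \subseteq \bG_r$ generated by $\bT_r$ and all $\bU_{\alpha,r}$ surjects onto $\bG_{r-1}$. Therefore it suffices to show $\bG_r^{r-1} \subseteq H$. But $\bG_r^{r-1} = \cT$ is exactly the kernel, and by Lemma \ref{lm:GGr_generated_by_roots}(ii) (applied with $a = r-1$, where $\bG_r^{a+1}$ should be read as $\bG_r^{r}=1$ in the degenerate case, so more precisely one uses the $a=r-1$ instance giving that $\bG_r^{r-1}$ is generated by $\bT_r^{r-1}$ and the $\bU_{\alpha,r}^{r-1}$) it is generated by $\cT = \bT_r^{r-1}$ and the $\bU_{\alpha,r}^{r-1}$, all of which lie in $H$. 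So (i) follows from (ii) together with the reductive base case; I would present (ii) first, or at least interleave the inductions, to avoid circularity.

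For part (ii), the point is that $\bG_r^{a+1}$ has a filtration $\bG_r^{a+1} \supseteq \bG_r^{a+2} \supseteq \cdots \supseteq \bG_r^{r} = 1$ whose successive quotients $\bG_r^{s}/\bG_r^{s+1}$ are commutative unipotent groups, and each such quotient decomposes (via the Moy--Prasad grading, i.e.\ the associated graded of \eqref{eq:intersection_of_P_and_root}) as a product of the images of $\bT_r^{s}/\bT_r^{s+1}$ and the $\bU_{\alpha,r}^{s}/\bU_{\alpha,r}^{s+1}$ over $\alpha \in \Phi$ — this is the Lie-algebra-level root space decomposition $\mathrm{Lie}(\bG) = \mathrm{Lie}(\bT) \oplus \bigoplus_\alpha \mathrm{Lie}(\bU_\alpha)$ read off at level $s$. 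By downward induction on $s$ from $s=r-1$, the subgroup generated by $\bT_r^{a+1}$ and the $\bU_{\alpha,r}^{a+1}$ surjects onto each $\bG_r^{a+1}/\bG_r^{s+1}$; taking $s = r-1$ gives all of $\bG_r^{a+1}$.

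The main obstacle I anticipate is purely foundational rather than structural: justifying that ``generated by'' in the world of (perfect) affine group schemes behaves as expected — i.e.\ that the fppf-subgroup-sheaf generated by a collection of closed subgroups is represented by a closed subgroup, that surjectivity on $\overline{\FF}_q$-points plus smoothness/connectedness lets one conclude equality of groups, and that the multiplication map from a product of root subgroups and the torus onto the relevant graded piece is an isomorphism of schemes (not just a bijection on points). In the mixed-characteristic case one must also check this is compatible with passage to perfection. Once these representability and ``schematic big cell'' facts are in place, both (i) and (ii) are formal consequences of the reductive base case and the filtration, so I would quote them (e.g.\ via the references to \cite{Yu_02} and standard reductive group theory) rather than reprove them, and spend the bulk of the written proof on setting up the induction cleanly.
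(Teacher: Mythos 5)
Your approach is genuinely different from the paper's, and it's worth noting why the paper's is much shorter. The paper observes that $\bG_r$ and $\bG_r^{a+1}$ are smooth affine $\FF_q$-groups, so the assertion ``generated by'' may be checked on $\overline{\FF}_q$-points — this one observation dissolves essentially all of the foundational concerns you flag in your last paragraph about fppf-generation, schematic big cells, and perfection. Having passed to $\overline{\FF}_q$-points, the paper invokes \cite[Theorem 8.3]{Yu_02} applied to the smooth models $P_{\bf x}$ and $P_{\bf x}^{a+}$, which states precisely that the groups $G(\breve k)_{\bx,f}$ are generated by the root-subgroup pieces $U_\alpha(\breve k)_{\bx,f(\alpha)}$. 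So both (i) and (ii) are a direct citation, no induction required. Your proposal instead rebuilds this from scratch: the $r=1$ case is the classical generation of a connected reductive group by a maximal torus and root subgroups, then an induction on $r$ via the extension $1\to\bG_r^{r-1}\to\bG_r\to\bG_{r-1}\to1$, with (ii) coming from a downward induction over the graded pieces $\bG_r^s/\bG_r^{s+1}$ and their Moy--Prasad root-space decomposition. This is correct in substance, and it buys a self-contained argument not relying on Yu's Theorem 8.3, at the cost of re-deriving a chunk of that theorem.

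Two small precision issues in your reduction of (i) to (ii). First, the instance of (ii) you invoke to handle the kernel $\bG_r^{r-1}$ is off by one: with $a=r-1$ statement (ii) concerns $\bG_r^r=1$ and says nothing; you want $a=r-2$, giving that $\bG_r^{r-1}$ is generated by $\bT_r^{r-1}$ and the $\bU_{\alpha,r}^{r-1}$. Second, since (ii) is stated only for $1\le a\le r-1$, the case $a=r-2$ requires $r\ge 3$, so for $r=2$ the kernel $\bG_2^1$ is not covered by any allowed instance of (ii); you must either extend your graded-piece argument to cover $\bG_r^1/\bG_r^2$ directly (which it does, since the argument doesn't actually use $a\geq 1$) or handle $r=2$ separately. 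Neither is a serious obstruction, but as written the induction does not quite close.
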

\begin{proof}
As $\bG_r$, $\bG_r^a$ are smooth affine $\FF_q$-groups, the assertions can be checked on $\overline{\FF}_q$-points. Now both cases follow from \cite[Theorem 8.3]{Yu_02} applied to the smooth models $P_{\bf x}$ and $P_{\bf x}^{a+}$ of $G$ respectively (note that with notations as in \emph{loc.~cit.}, the group $G(k)_{\bx,f}$ is by definition the one generated by all $U_a(k)_{\bx,f(a)}$).
\end{proof}


\begin{rem} Let $U'$ be the unipotent radical of some other Borel subgroup of $G_{\breve k}$ containing $T$. Although $U$ and $U'$ are conjugate by an element of $G(\breve k)$, the groups $\bU_r(\overline{\FF}_q)$ and $\bU'_r(\overline{\FF}_q)$ need not be isomorphic. For example, let $G$ be the anisotropic modulo center inner form of $\GL_3$ (it splits over $\breve k$ and its $k$-points are isomorphic to the units of a division algebra over $k$). Let ${\bf x}$ be the unique point in $\mathscr{B}_k$. Then $\bG_1 = \bT_1$ is a torus and (after an appropriate choice of ${\bf x}_0$) one has $\bG_2(\overline{\FF}_q) = \left(\begin{smallmatrix} \bW_2(\overline{\FF}_q)^\times & \overline{\FF}_q & \overline{\FF}_q \\ \varpi \overline{\FF}_q & \bW_2(\overline{\FF}_q)^\times & \overline{\FF}_q \\ \varpi \overline{\FF}_q & \varpi \overline{\FF}_q & \bW_2(\overline{\FF}_q)^\times\end{smallmatrix} \right)$ with obvious multiplication.  Now, let $U$ and $U'$ be the group of upper- and lower-triangular unipotent matrices in $G$. Then $\bU_2 = \bU_2^1$ is non-abelian, whereas $\bU_2' = \bU_2^{\prime,1}$ is abelian. \hfill $\Diamond$
\end{rem}

\subsection{The groups $\bU_{\alpha,r}$}

We now give explicit formulas for $\bU_{\alpha,r} \subseteq \bG_r$. 

\begin{Def}\label{def:reductive_nonreductive_roots}
Let ${\bf x} \in \sA_{T,\breve k}$. We call a root $\alpha \in \Phi$ 
\begin{align*} \text{reductive} \qquad & \qquad \text{if $\langle \alpha, {\bf x} - {\bf x}_0 \rangle \in \bZ$} \\ \text{non-reductive} \qquad & \qquad \text{otherwise}.
\end{align*} 
For any $\alpha \in \Phi$, we may uniquely write $\langle \alpha, {\bf x} - {\bf x}_0 \rangle = -m_{\alpha} + \varepsilon_{\alpha}$ with $m_{\alpha} \in \bZ$ and $0 \leq \varepsilon_{\alpha} < 1$. We have $m_\alpha = - \lfloor \langle \alpha, {\bf x} - {\bf x}_0 \rangle  \rfloor$.
\end{Def}
Note that $\alpha \in \Phi$ is reductive if and only if $\bU_{\alpha,1} \neq 1$.

\begin{lm}\label{lm:structure_of_rootgroup_subquotients}
Let ${\bf x} \in \sA_{T,\breve k}$ and let $r \in \bZ_{\geq 1}$. Let $\alpha \in \Phi$. We have 
\[m_\alpha + m_{-\alpha} = 
\begin{cases} 
0 & \text{if $\alpha$ is reductive} \\ 1 & \text{otherwise.} 
\end{cases}\]
Moreover, the natural map $\breve P_{\bf x} \tar \bG_r(\overline{\FF}_q)$ induces 
\[
\bU_{\alpha,r}(\overline{\FF}_q) = 
\begin{cases} 
\breve U_{\alpha,m_\alpha} / \breve U_{\alpha,m_\alpha +  r} & \text{if $\alpha$ reductive,} \\ 
\breve U_{\alpha,m_\alpha} / \breve U_{\alpha,m_\alpha +  r - 1} & \text{otherwise.}
\end{cases}
\]
Thus for $a \in \bZ$, $r \geq a \geq 1$, the same map induces
\[
\bU_{\alpha,r}^a(\overline{\FF}_q) = 
\begin{cases} 
\breve U_{\alpha,m_\alpha + a} / \breve U_{\alpha,m_\alpha +  r} & \text{if $\alpha$ reductive,} \\ 
\breve U_{\alpha,m_\alpha + a - 1} / \breve U_{\alpha,m_\alpha +  r - 1} & \text{otherwise.}
\end{cases}
\]
Finally, we have $\bT_r(\overline{\FF}_q) = \breve T^0/\breve T^r$ and $\bT_r^a = \breve T^a/\breve T^r$.
\end{lm}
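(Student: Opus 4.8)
The plan is to reduce the statement to an explicit description of the schematic closures $U_{\alpha,{\bf x}}$ and $T_{\bf x}$ of $U_\alpha$ and $T$ in $P_{{\bf x},\cO}$, and then to read off $\bU_{\alpha,r}$, $\bT_r$ and their subquotients on $\overline{\FF}_q$-points. The claim about $m_\alpha + m_{-\alpha}$ is immediate from the definitions: writing $\langle\alpha,{\bf x}-{\bf x}_0\rangle = -m_\alpha+\varepsilon_\alpha$ with $m_\alpha\in\bZ$ and $0\le\varepsilon_\alpha<1$, one has $\langle-\alpha,{\bf x}-{\bf x}_0\rangle = m_\alpha-\varepsilon_\alpha$, which reads as $-(-m_\alpha)+0$ if $\varepsilon_\alpha = 0$ (so $m_{-\alpha} = -m_\alpha$) and as $-(m_\alpha - 1) + (1-\varepsilon_\alpha)$ with $0 < 1-\varepsilon_\alpha < 1$ if $\varepsilon_\alpha\ne 0$ (so $m_{-\alpha} = 1-m_\alpha$); this is exactly the asserted dichotomy, and it also shows that "reductive" is symmetric in $\pm\alpha$.

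For the root groups I would first pin down $U_{\alpha,{\bf x}}$. By \eqref{eq:intersection_of_P_and_root} with $r=0$ — equivalently, by the generation of $\breve P_{\bf x}$ by affine root subgroups recalled in Section \ref{sec:parahorics} — we have $U_{\alpha,{\bf x}}(\cO) = \breve P_{\bf x}\cap U_\alpha(\breve k) = \breve U_{\alpha,\lceil m_\alpha - \varepsilon_\alpha\rceil} = \breve U_{\alpha,m_\alpha}$, in both the reductive and the non-reductive case. Since $U_\alpha\cong\bG_a$ via the Chevalley parametrization $u_\alpha$, which carries $\varpi^{m_\alpha}\cO$ onto $\breve U_{\alpha,m_\alpha}$, and since $U_{\alpha,{\bf x}}$ is smooth by \cite[8.3 Theorem (ii)]{Yu_02}, this forces $U_{\alpha,{\bf x}}\cong\bG_{a,\cO}$ via $y\mapsto u_\alpha(\varpi^{m_\alpha}y)$. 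Consequently, and because $\bU_{\alpha,r}$ is reduced and equal to the image of $L_r^+U_{\alpha,{\bf x}}$ in $\bG_{r,\overline\FF_q}$ (Section \ref{sec:subgroups_of_bGr}), its $\overline{\FF}_q$-points are the image of $U_{\alpha,{\bf x}}(\cO)=\breve U_{\alpha,m_\alpha}$ under $\breve P_{\bf x}\twoheadrightarrow\breve G_r = \breve P_{\bf x}/\breve P_{\bf x}^{(r-1)+}$, i.e.\ $\bU_{\alpha,r}(\overline{\FF}_q) = \breve U_{\alpha,m_\alpha}/(\breve U_{\alpha,m_\alpha}\cap\breve P_{\bf x}^{(r-1)+})$.

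It then remains to compute $\breve U_{\alpha,m_\alpha}\cap\breve P_{\bf x}^{(r-1)+}$. Using $\breve P_{\bf x}^{(r-1)+} = \bigcup_{s > r-1}\breve P_{\bf x}^s$ together with \eqref{eq:intersection_of_P_and_root}, this intersection equals $\bigcup_{s>r-1}\breve U_{\alpha,\lceil s + m_\alpha - \varepsilon_\alpha\rceil}$; computing the ceiling for $s$ slightly larger than $r-1$ gives $\breve U_{\alpha,m_\alpha+r}$ when $\varepsilon_\alpha = 0$ (here $r-1+m_\alpha\in\bZ$) and $\breve U_{\alpha,m_\alpha+r-1}$ when $0<\varepsilon_\alpha<1$ (here $r-1+m_\alpha-\varepsilon_\alpha$ lies strictly between $m_\alpha+r-2$ and $m_\alpha+r-1$). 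This yields the two displayed formulas for $\bU_{\alpha,r}(\overline{\FF}_q)$; the formulas for $\bU_{\alpha,r}^a = \ker(\bU_{\alpha,r}\to\bU_{\alpha,a})$ follow at once by taking the kernel of the reduction $\breve U_{\alpha,m_\alpha}/\breve U_{\alpha,m_\alpha+r}\to\breve U_{\alpha,m_\alpha}/\breve U_{\alpha,m_\alpha+a}$ in the reductive case and the analogous map in the non-reductive case. For the torus one runs the same recipe: since $T_{\breve k}$ is split, $T_{\bf x}$ is the split torus $\bG_{m,\cO}^{\dim T}$, so $T_{\bf x}(\cO) = \breve T^0$, $\ker(T_{\bf x}(\cO)\to T_{\bf x}(\cO/\varpi^r)) = (1+\varpi^r\cO)^{\dim T} = \breve T^r$, and passing to the image in $\breve G_r$ gives $\bT_r(\overline{\FF}_q) = \breve T^0/(\breve T^0\cap\breve P_{\bf x}^{(r-1)+}) = \breve T^0/\breve T^{(r-1)+} = \breve T^0/\breve T^r$ (the last equality again by splitness), and then $\bT_r^a$ has $\overline{\FF}_q$-points $\breve T^a/\breve T^r$.

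Everything here is elementary; the two points that need care are (i) that the set-theoretic image on $\overline{\FF}_q$-points computes the scheme-theoretic $\bU_{\alpha,r}$ and $\bT_r$ — this is fine since these are reduced affine $\FF_q$-groups and the image of a homomorphism of affine algebraic groups over $\overline{\FF}_q$ is a closed subgroup whose points are the image points — and (ii) the ceiling/"$(r-1)+$" bookkeeping in the non-reductive case, which is precisely where the shifted index $m_\alpha+r-1$, rather than $m_\alpha+r$, appears. I expect (ii) to be the only genuine subtlety, since all the structural input — smoothness of the integral models, the affine root subgroup filtration, and \eqref{eq:intersection_of_P_and_root} — is already available from Section \ref{sec:parahorics} and \cite{Yu_02}.
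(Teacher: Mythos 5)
Your proof is correct and follows the same route as the paper: apply \eqref{eq:intersection_of_P_and_root}, compute the relevant ceiling in the reductive and non-reductive cases, and then chase the definitions of $\bU_{\alpha,r}$, $\bU_{\alpha,r}^a$, $\bT_r$. The only slight detour is the explicit identification $U_{\alpha,\bf x}\cong\bG_{a,\cO}$, which you do not actually need once you know $U_{\alpha,{\bf x}}(\cO) = \breve P_{\bf x}\cap U_\alpha(\breve k) = \breve U_{\alpha,m_\alpha}$; otherwise this matches the argument the authors sketch.
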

\begin{proof}
Noting that $\lceil - s \rceil = - \lfloor s \rfloor$ for $s \in \bR$, the lemma follows immediately from \eqref{eq:intersection_of_P_and_root} and the definitions of $\bU_{\alpha,r}$, $\bU_{\alpha,r}^a$ and $\bG_r$.
\end{proof}

We have the following elementary lemma will be useful later.

\begin{lm}\label{lm:epsilons_and_ms}
Let $\alpha, \beta \in \Phi$ and assume that $p,q \in \bZ_{\geq 1}$, such that $p \alpha + q\beta \in \Phi$. Then $pm_{\alpha} + qm_{\beta} - m_{p \alpha + q\beta} = p \varepsilon_{\alpha} + q\varepsilon_{\beta} - \varepsilon_{p \alpha + q\beta} = \lfloor p\varepsilon_{\alpha} + q\varepsilon_{\beta} \rfloor$. In particular, $pm_{\alpha} + qm_{\beta} - m_{p \alpha + q\beta} \geq 0$.
\end{lm}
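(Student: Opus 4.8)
The plan is to simply unwind the definition of $m_\alpha$ and $\varepsilon_\alpha$ and use the additivity of the pairing $\langle \cdot, \cdot \rangle$ in the root argument. Recall that for any $\gamma \in \Phi$ we have the unique decomposition $\langle \gamma, {\bf x} - {\bf x}_0 \rangle = -m_\gamma + \varepsilon_\gamma$ with $m_\gamma \in \bZ$ and $0 \le \varepsilon_\gamma < 1$; equivalently $m_\gamma = -\lfloor \langle \gamma, {\bf x} - {\bf x}_0 \rangle \rfloor$ and $\varepsilon_\gamma = \{ \langle \gamma, {\bf x} - {\bf x}_0 \rangle \}$ is the fractional part. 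First I would apply this to $\gamma = \alpha$, $\gamma = \beta$, and $\gamma = p\alpha + q\beta$, and use $\bZ$-bilinearity of $\langle \cdot, \cdot \rangle$ to write
\[
-m_{p\alpha+q\beta} + \varepsilon_{p\alpha+q\beta} = \langle p\alpha + q\beta, {\bf x} - {\bf x}_0 \rangle = p\langle \alpha, {\bf x} - {\bf x}_0\rangle + q \langle \beta, {\bf x} - {\bf x}_0 \rangle = -(pm_\alpha + qm_\beta) + (p\varepsilon_\alpha + q\varepsilon_\beta).
\]
Rearranging gives $pm_\alpha + qm_\beta - m_{p\alpha+q\beta} = (p\varepsilon_\alpha + q\varepsilon_\beta) - \varepsilon_{p\alpha+q\beta}$, which is the first claimed equality.

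For the second equality, note that the left-hand side $pm_\alpha + qm_\beta - m_{p\alpha+q\beta}$ is an integer, and it equals $(p\varepsilon_\alpha + q\varepsilon_\beta) - \varepsilon_{p\alpha+q\beta}$. Since $0 \le \varepsilon_{p\alpha+q\beta} < 1$, this integer is the unique integer $N$ with $p\varepsilon_\alpha + q\varepsilon_\beta - 1 < N \le p\varepsilon_\alpha + q\varepsilon_\beta$, i.e. $N = \lfloor p\varepsilon_\alpha + q\varepsilon_\beta \rfloor$. (Concretely: $\varepsilon_{p\alpha+q\beta}$ is forced to be the fractional part of $p\varepsilon_\alpha + q\varepsilon_\beta$, since the two sides of the displayed equation differ by an integer and both $\varepsilon_{p\alpha+q\beta}$ and $\{p\varepsilon_\alpha+q\varepsilon_\beta\}$ lie in $[0,1)$.) Hence $pm_\alpha + qm_\beta - m_{p\alpha+q\beta} = \lfloor p\varepsilon_\alpha + q\varepsilon_\beta \rfloor$.

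Finally, the ``in particular'' statement is immediate: since $p, q \ge 1$ and $\varepsilon_\alpha, \varepsilon_\beta \ge 0$, we have $p\varepsilon_\alpha + q\varepsilon_\beta \ge 0$, so $\lfloor p\varepsilon_\alpha + q\varepsilon_\beta \rfloor \ge 0$. There is no real obstacle here — the only thing to be careful about is correctly identifying which quantity is forced to be the fractional part, but this is pinned down by the uniqueness in the defining decomposition of $\langle p\alpha + q\beta, {\bf x} - {\bf x}_0\rangle$.
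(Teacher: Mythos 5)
Your proof is correct and takes essentially the same approach as the paper: the first equality follows from $\bZ$-bilinearity of the pairing (which the paper calls "immediate"), and the second from the observation that the quantity is an integer while $0 \le \varepsilon_{p\alpha+q\beta} < 1$ pins it down as the floor of $p\varepsilon_\alpha + q\varepsilon_\beta$.
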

\begin{proof}
The first equality is immediate. In particular, $p \varepsilon_{\alpha} + q\varepsilon_{\beta} - \varepsilon_{p \alpha + q\beta}$ is an integer. This, along with the fact that $0 \leq \varepsilon_{p\alpha + q\beta} < 1$ by definition, implies the second equality.
\end{proof}

\subsection{Weyl groups. Bruhat--decomposition}\label{sec:Weyl_Bruhat}
We have the group
\[
W_{\bf x}(T) \colonequals (N_G(T)(\breve k) \cap \breve P_{\bf x}^0)/ \breve T^0 
\]
(cf. \cite[Proposition 8]{HainesR_08}), and it coincides with the Weyl group $W(\bT_1, \bG_1)$ of the torus $\bT_1$ in the special fiber $\bG_1$ of  $P_{\bf x}$ (\cite[Proposition 12]{HainesR_08}). It follows that both natural maps in the composition 
\[
W_{\bf x}(T) \rar N_{\bG_r}(\bT_r)(\overline{\FF}_q)/\bT_r(\overline{\FF}_q) \rar N_{\bG_1}(\bT_1)(\overline{\FF}_q)/\bT_1(\overline{\FF}_q)
\] 
are isomorphisms. Here $N_G(H)$ denotes the scheme-theoretic normalizer of the subgroup $H$ of a group $G$ (note that it might be non-reduced, but we have $N_G(H)(\overline{\FF}_q) = N_G(H)_{\rm red}(\overline{\FF}_q) = \{ g \in G(\overline{\FF}_q) \colon g H g^{-1} = H \}$).
We also note that $W_{\bf x}(T)$ coincides with the subgroup of the Weyl group $W = W(T,G)$ of $T$ in $G$ generated by the vector parts of all affine roots $\psi \in \Phi_{\rm aff}$ satisfying $\psi({\bf x}) = 0$ (cf.\ \cite[1.9, 3.5.1]{Tits_79}). It depends only on the facet of $\sB_{\breve k}$ in which ${\bf x}$ lies, not on ${\bf x}$ itself.

We will need a second $k$-rational, $\breve k$-split maximal torus $T'$ of $G$ whose apartment $\sA_{T',\breve k}$ in $\sB_{\breve k}$ passes through the point ${\bf x}$. 
Let $N_G(T,T') = \{g \in G \colon gTg^{-1} = T'\}$ be the transporter from $T$ to $T'$ and analogously, let $N_{\bG_r}(\bT_r, \bT_r')$ be the transporter from $\bT_r$ to $\bT_r'$. (Again, these need not be reduced, but we are interested in $\overline{\FF}_q$-points only.)
We then have the principal homogeneous space 
\begin{equation*}
W_{{\bf x}}(T, T') \colonequals \breve T^0 \backslash (N_G(T, T')(\breve k) \cap \breve P_{\bx}^0) = \bT_r(\overline{\FF}_q) \backslash N_{\bG_r}(\bT_r, \bT_r')(\overline{\FF}_q).
\end{equation*}
under $W_{\bf x}(T)$. Indeed, this follows as $T$ and $T'$ are conjugate by an element of $P_{\bf x}(\cO)$.

%

Let $r \geq 1$. For each $w \in W_{\bf x}(T,T')$ choose a representative $\dot{w} \in N_{\bG_r}(\bT_r,\bT'_r)(\overline{\FF}_q)$, and denote its image in $\bG_1$ again by $\dot w$. We have the Bruhat decomposition $\bG_1 = \bigsqcup_{w\in W_{\bf x}(T,T')} \bG_{1,w}$ of the reductive quotient, where $\bG_{1,w} = \bU_1\dot{w}\bT_1'\bU_1'$. For $r \geq 1$, define $\bG_{r,w}$ to be the pullback of $\bG_{1,w}$ along the natural projection $\bG_r \twoheadrightarrow \bG_1$. Thus $\bG_r = \bigsqcup_{w \in W_{{\bf x}}(T,T')} \bG_{r,w}$. Let $\bK_r \colonequals \bU_r^- \cap \dot w \bU_r^{\prime -} \dot w^{-1}$ and $\bK_r^1 \colonequals \bK_r \cap \bG_r^1$.  

\begin{lm}\label{lm:Bruhat_cells_decomp}
For $r \geq 1$, we have $\bG_{r,w} = \bU_r \bK_r^1 \dot{w} \bT_r'\bU_r '$.
\end{lm}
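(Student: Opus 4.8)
The plan is to prove the stated identity as subsets of $\breve G_r=\bG_r(\overline{\FF}_q)$; since every group scheme in sight is reduced and of finite type over $\FF_q$, this suffices. I would interpolate through the chain of equalities
\[ \breve G_{r,w} \;=\; \breve U_r\dot w\breve T_r'\breve U_r'\cdot\breve G_r^1 \;=\; \breve U_r\breve K_r^1\dot w\breve T_r'\breve U_r'. \]
For the first equality: under $\bG_r\tar\bG_1$ the right-hand set maps onto $\breve U_1\dot w\breve T_1'\breve U_1'=\breve G_{1,w}$ (as $\breve G_r^1=\ker(\bG_r\tar\bG_1)(\overline{\FF}_q)$), hence is contained in the preimage $\breve G_{r,w}$. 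Conversely, the reduction maps $\breve U_r\to\breve U_1$, $\breve T_r'\to\breve T_1'$ and $\breve U_r'\to\breve U_1'$ are surjective, being surjections of finite-type schemes over the algebraically closed field $\overline{\FF}_q$; so for $g\in\breve G_{r,w}$ one writes the image of $g$ — which lies in $\breve G_{1,w}=\breve U_1\dot w\breve T_1'\breve U_1'$ by definition of $\breve G_{r,w}$ — as $\bar u\dot w\bar t'\bar u'$ and lifts the factors to get $g_0\in\breve U_r\dot w\breve T_r'\breve U_r'$ with the same image, whence $gg_0^{-1}\in\ker(\bG_r\tar\bG_1)(\overline{\FF}_q)=\breve G_r^1$ and, by normality of $\breve G_r^1$, $g\in\breve G_r^1\cdot\breve U_r\dot w\breve T_r'\breve U_r'=\breve U_r\dot w\breve T_r'\breve U_r'\cdot\breve G_r^1$.

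For the second equality, normality of $\breve G_r^1$ (so $\dot w\breve G_r^1\dot w^{-1}=\breve G_r^1$) gives $\breve U_r\dot w\breve T_r'\breve U_r'\cdot\breve G_r^1=\breve U_r\breve G_r^1\dot w\breve T_r'\breve U_r'$, so the whole statement reduces to the factorization
\[ \breve G_r^1 \;=\; \breve U_r^1\cdot\breve K_r^1\cdot\dot w\bigl(\breve T_r^{\prime,1}\breve U_r^{\prime,1}\bigr)\dot w^{-1}, \]
where the superscript $1$ denotes the level-$\geq 1$ part, so that $\breve U_r^1=\breve U_r\cap\breve G_r^1$, $\breve K_r^1=\breve U_r^{-,1}\cap\dot w\breve U_r^{\prime -,1}\dot w^{-1}$, and $\breve T_r^{\prime,1}\breve U_r^{\prime,1}=\breve T_r'\breve U_r'\cap\breve G_r^1$. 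Indeed, substituting this into $\breve U_r\breve G_r^1\dot w\breve T_r'\breve U_r'$, absorbing $\breve U_r^1$ into $\breve U_r$ on the left, cancelling $\dot w^{-1}\dot w$, and using $\breve T_r^{\prime,1}\breve U_r^{\prime,1}\subseteq\breve T_r'\breve U_r'$ turns it into $\breve U_r\breve K_r^1\dot w\breve T_r'\breve U_r'$.

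Thus everything comes down to this last factorization of $\breve G_r^1$ — the ``depth $\geq1$'' analogue of the big-cell decomposition — which is the step I expect to be the main obstacle. I would prove it by descending induction along the Moy--Prasad filtration $\breve G_r^1\supseteq\breve G_r^2\supseteq\cdots\supseteq\breve G_r^r=1$, using at each level: the commutator bounds $[\breve G_r^i,\breve G_r^j]\subseteq\breve G_r^{i+j}$, which make each graded piece $\breve G_r^a/\breve G_r^{a+1}$ abelian; Lemma~\ref{lm:GGr_generated_by_roots}(ii), which — together with the fact that this graded piece is a direct sum of one-dimensional affine root spaces and a Cartan part — exhibits it as the direct sum of the images of $\bT_r^a$ and of the $\bU_{\alpha,r}^a$ ($\alpha\in\Phi$); and a combinatorial sorting assigning each such summand to exactly one of $\breve U_r^1$ (for $\alpha\in\Phi^+$), $\breve K_r^1$, or $\dot w(\breve T_r'\breve U_r')\dot w^{-1}$. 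Granting the sorting, one peels off a representative of each of the three subgroups, notes that the residue lies in $\breve G_r^{a+1}$, applies the inductive hypothesis, and uses the commutator bounds — which push all correction terms to strictly deeper levels, where the filtration terminates — to collect the accumulated factors into the prescribed order $\breve U_r^1\cdot\breve K_r^1\cdot\dot w(\breve T_r^{\prime,1}\breve U_r^{\prime,1})\dot w^{-1}$. The genuinely non-formal ingredient is this sorting: it requires comparing the positive system of $T$-roots defining $\bU_r$ with the $\dot w$-conjugate of the data defining $\bB_r'=\bT_r'\bU_r'$ — bearing in mind that $\dot w$ conjugates $\bT_r$ to $\bT_r'$, so the ``affine root line'' decomposition of a graded piece coming from the apartment of $T$ and the one coming from that of $T'$ need not coincide — and also handling the shift in the description of $\bU_{\alpha,r}$ for non-reductive roots recorded in Lemma~\ref{lm:structure_of_rootgroup_subquotients}; equivalently, one needs a single convex ordering of the affine roots occurring in $\breve G_r^1$ in which those contributing to $\breve U_r$ precede those contributing to $\breve K_r^1$, which in turn precede the rest.
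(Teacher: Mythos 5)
Your reduction to the auxiliary factorization $\breve G_r^1 = \breve U_r^1\cdot\breve K_r^1\cdot\dot w\bigl(\breve T_r^{\prime,1}\breve U_r^{\prime,1}\bigr)\dot w^{-1}$ is correct, and the bookkeeping around it (passing to $\overline{\FF}_q$-points, normality of $\bG_r^1$, absorbing $\breve U_r^1$, $\breve T_r^{\prime,1}\breve U_r^{\prime,1}$ into $\breve U_r$, $\breve T_r'\breve U_r'$) all checks out. But the inductive argument you propose for that factorization has a concrete gap, and it is precisely the technical point this paper flags as the main new difficulty over the hyperspecial case. You assert a commutator bound $[\breve G_r^i,\breve G_r^j]\subseteq\breve G_r^{i+j}$ and conclude that \emph{each} graded piece $\breve G_r^a/\breve G_r^{a+1}$ is abelian. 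This is false for $a=1$ unless $P_{\bf x}$ is reductive: the first graded piece is $\breve G_r^1/\breve G_r^2\cong\bG_2^1(\overline{\FF}_q)$, and as the paper explains at the start of Section~\ref{sec:filtration_general} (and as Lemma~\ref{lm:g_filtration_abelian} confirms by needing a \emph{refined} filtration to obtain abelian quotients at level $a=1$), $\bG_2^1$ is non-abelian whenever ${\bf x}$ is not hyperspecial. In Moy--Prasad terms, $\breve G_r^1=\breve P_{\bf x}^{0+}/\breve P_{\bf x}^{(r-1)+}$, and $[\breve P_{\bf x}^{0+},\breve P_{\bf x}^{0+}]\subseteq\breve P_{\bf x}^{0+}$ is all the filtration gives you — not $\subseteq\breve P_{\bf x}^{1+}$. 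So your descent cannot start without importing the machinery of Section~\ref{sec:filtration_general}, which in the paper comes \emph{after} this lemma. On top of this, the ``sorting''/convex-ordering step you identify as the non-formal ingredient is genuinely delicate (two positive systems, a Weyl twist, and the reductive/non-reductive shift from Lemma~\ref{lm:structure_of_rootgroup_subquotients} all interact), and you would have to prove it rather than postulate it.

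The paper's proof avoids all of this by invoking a single black box: the Iwahori decomposition $\bG_r^1=(\bG_r^1\cap\bT_r')(\bG_r^1\cap\bU_r^{\prime -})(\bG_r^1\cap\bU_r')$ of the pro-unipotent radical with respect to the torus $T'$ and the opposite unipotent radicals $U'^{\pm}$, cited from \cite[6.4.48]{BruhatT_72}. One then inserts this into $\bG_{r,w}=\bU_r\dot w\bT_r'\bG_r^1\bU_r'$, absorbs $\bG_r^1\cap\bT_r'$ into $\bT_r'$ and $\bG_r^1\cap\bU_r'$ into $\bU_r'$, commutes $\bT_r'$ past $\bG_r^1\cap\bU_r^{\prime -}$ (the torus normalizes it), and finally conjugates by $\dot w$ to land on $\bK_r^1$. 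No filtration descent, no convex ordering, no abelianness of graded pieces is used. Your framework could in principle be completed — replace your false commutator bound by the refined filtration of Section~\ref{sec:filtration_general}, and prove the ordering claim — but this amounts to re-deriving a special case of Bruhat--Tits' Iwahori decomposition by hand, which is considerably more work than the paper's one-line citation.
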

\begin{proof} 
We compute 
\begin{align*}
\bG_{r,w} &= \bU_r \dot w \bT_r' \bG_r^1 \bU_r'  = \bU_r \dot w \bT_r' \left((\bG_r^1 \cap \bT_r') (\bG_r^1 \cap \bU_r^{\prime-}) (\bG_r^1 \cap \bU_r')\right) \bU_r' \\
&= \bU_r \dot w \bT_r' (\bG_r^1 \cap \bU_r^{\prime-}) \bU_r' = \bU_r \left(\dot w (\bG_r^1 \cap \bU_r^{\prime-}) \dot w^{-1}\right) \dot{w} \bT_r' \bU_r' \\
&= \bU_r \left(\bU_r^- \cap \dot w (\bG_r^1 \cap \bU_r^{\prime -}) \dot w^{-1}\right) \dot w \bT_r' \bU_r' = \bU_r \bK_r^1 \dot w \bT_r' \bU_r', 
\end{align*}
where the second equality follows from \cite[6.4.48]{BruhatT_72}. 
\end{proof}

%

\subsection{Commutation relations}

For two subgroups $H_1,H_2$ of an abstract group $H$, we denote by $[H_1,H_2]$ their commutator. For $x,y \in H$, we write $[x,y] \colonequals x^{-1}y^{-1}xy$. 

For $\alpha \in \Phi$, let $T^{\alpha} \subseteq T$ denote the image of the coroot corresponding to $\alpha$. It is a one-dimensional subtorus. We also write $\breve T^{\alpha,r} = T^{\alpha}(\breve k) \cap \breve T^r$.

\begin{lm}\label{lm:usual_commutator_relations}
\begin{itemize}
\item[(i)] Let $\alpha \in \Phi$ and $r,m \in \widetilde{\bR}$. Then $[\breve T^r, \breve U_{\alpha,m}] \subseteq \breve U_{\alpha, m+r}$. 
\item[(ii)] If $\alpha, \beta \in \Phi$, $\alpha \neq -\beta$, and $m_1,m_2 \in \bZ$, then $[\breve U_{\alpha,m_1}, \breve U_{\beta,m_2}]$ is contained in the group generated by $\breve U_{p\alpha + q\beta, pm_1+qm_2}$ for all $p,q \in \bZ_{\geq 1}$, such that $p\alpha + q\beta  \in \Phi$.
\item[(iii)] Let $\alpha \in \Phi$ and $m_1,m_2 \in \bZ$. Then $[\breve U_{\alpha,m_1}, \breve U_{-\alpha,m_2}] \subseteq \breve T^{\alpha,m_1+m_2}$. For any element $x \in \breve U_{-\alpha,m_2} \sm \breve U_{-\alpha,m_2+1}$, the map $\xi \mapsto [\xi,x]$ induces an isomorphism (of abelian groups)
\[
\lambda_x \colon \breve U_{\alpha,m_1}/\breve U_{\alpha,m_1 + 1} \stackrel{\sim}{\rar} \breve T^{\alpha,m_1+m_2}/\breve T^{\alpha, m_1 + m_2 + 1}.
\]
\end{itemize}
\end{lm}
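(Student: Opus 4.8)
The plan is to deduce all three parts from explicit identities in the Chevalley system $\{u_\alpha\}$ of Section~\ref{sec:groupth_data}; parts (i) and (ii) are direct, and (iii) reduces to a rank-one computation, which is the only place real care is needed. For (i): since $t\,u_\alpha(y)\,t^{-1}=u_\alpha(\alpha(t)y)$ for $t\in T(\breve k)$, one gets $[t,u_\alpha(y)]=u_\alpha((1-\alpha(t)^{-1})y)$. If $t\in\breve T^r$, specializing the defining condition of $\breve T^r$ to the character $\chi=\alpha$ gives $\ord(\alpha(t)-1)\geq r$, and since $\alpha(t)\in\cO^\times$ also $\ord(1-\alpha(t)^{-1})\geq r$; hence $[t,u_\alpha(y)]\in\breve U_{\alpha,m+r}$ whenever $\varphi_\alpha(u_\alpha(y))=\ord(y)\geq m$. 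As $\breve U_{\alpha,m+r}$ is a group, this passes to the commutator subgroup, and the values $r+$ and $\infty$ in $\widetilde\bR$ are handled formally by the same ordinal arithmetic.

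For (ii): the key input is the Chevalley commutator formula, available since $G_{\breve k}$ is split (so $\Phi$ is reduced): for $\alpha\neq-\beta$,
\[
[u_\alpha(x),u_\beta(y)] \;=\; \prod_{\substack{p,q\geq 1\\ p\alpha+q\beta\in\Phi}} u_{p\alpha+q\beta}\bigl(c_{p,q}\,x^p y^q\bigr),
\]
for some fixed ordering of the factors and \emph{integer} structure constants $c_{p,q}$ depending only on the chosen Chevalley system (cf.\ \cite{BruhatT_72}). Since $\ord(c_{p,q}x^py^q)\geq p\,\ord(x)+q\,\ord(y)\geq pm_1+qm_2$, each factor lies in $\breve U_{p\alpha+q\beta,\,pm_1+qm_2}$; hence the commutator, and then the commutator subgroup, lies in the group generated by these.

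For (iii): I would pass to the rank-one subgroup $G_\alpha:=\langle U_\alpha,U_{-\alpha}\rangle\subseteq G_{\breve k}$, which is a quotient of $\SL_2$ with maximal torus $T^\alpha=\alpha^\vee(\bG_m)$, choosing the identification so that $u_{\pm\alpha}$ and $\alpha^\vee$ are the standard coordinates and the $\breve U_{\pm\alpha,\ast}$, $\breve T^{\alpha,\ast}$ become the standard congruence filtrations. A short matrix computation in $\SL_2$ then establishes (iii): it writes
\[
[u_\alpha(a),u_{-\alpha}(b)] \;=\; u_\alpha(v_1)\cdot\alpha^\vee((1-ab)^{-1})\cdot u_{-\alpha}(v_2)
\]
with $\ord(v_1)\geq 2m_1+m_2$ and $\ord(v_2)\geq m_1+2m_2$, so that once $\ord(a)\geq m_1$, $\ord(b)\geq m_2$ the $T^\alpha$-component $\alpha^\vee((1-ab)^{-1})$ lies in $\breve T^{\alpha,m_1+m_2}$ while the two non-torus factors lie in strictly deeper steps of the $\breve U_{\pm\alpha}$-filtrations (in particular they vanish in the finite quotients $\bG_r$ used later). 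For the isomorphism, fix $x=u_{-\alpha}(b_0)$ with $\ord(b_0)=m_2$ exactly and read $u_\alpha(a)\mapsto[u_\alpha(a),x]$ modulo $\breve T^{\alpha,m_1+m_2+1}$: under the canonical isomorphisms $\breve U_{\alpha,m_1}/\breve U_{\alpha,m_1+1}\cong\mathfrak{p}^{m_1}/\mathfrak{p}^{m_1+1}$ ($u_\alpha(a)\mapsto a$) and $\breve T^{\alpha,m_1+m_2}/\breve T^{\alpha,m_1+m_2+1}\cong\mathfrak{p}^{m_1+m_2}/\mathfrak{p}^{m_1+m_2+1}$ ($\alpha^\vee(1+c)\mapsto c$), the map $\lambda_x$ becomes $a\mapsto\pm ab_0$. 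It is additive because $\alpha^\vee((1-ab_0)^{-1})\equiv\alpha^\vee(1+ab_0)\pmod{\breve T^{\alpha,m_1+m_2+1}}$, i.e.\ the commutator is bilinear to leading order (equivalently it is the Lie-bracket pairing $\mathfrak{u}_\alpha\times\mathfrak{u}_{-\alpha}\to\mathfrak{t}^\alpha$, which is non-degenerate onto the coroot line), and it is bijective because $\ord(b_0)=m_2$ makes multiplication by $b_0$ an isomorphism $\mathfrak{p}^{m_1}/\mathfrak{p}^{m_1+1}\xrightarrow{\ \sim\ }\mathfrak{p}^{m_1+m_2}/\mathfrak{p}^{m_1+m_2+1}$.

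The routine parts are the two matrix multiplications in (i) and (iii) and the appeal to the Chevalley formula in (ii). The one step that needs care is the bookkeeping in (iii) — confirming that exactly the claimed deeper steps of the $\breve U_{\pm\alpha}$- and $\breve T^{\alpha}$-filtrations absorb the non-torus contributions to the rank-one commutator, so that $\lambda_x$ is well defined and the reduction to leading order is legitimate — together with fixing the bracket normalization forced by the Chevalley system, which is precisely what makes the leading-order pairing perfect rather than merely bilinear.
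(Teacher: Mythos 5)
Your proof is correct and takes essentially the same route as the paper: the paper proves (ii) by citing Bruhat--Tits (6.2.1), which is exactly the Chevalley commutator formula with integral structure constants that you invoke, and for (i), (iii) it reduces to $\SL_2$ via the morphism of \cite[(6.2.3)~b)]{BruhatT_72} and leaves an ``immediate computation'' unspoken, which is precisely the matrix computation you carry out. Your treatment of (i) is, if anything, a touch cleaner than the paper's, since the full torus $T$ need not factor through the rank-one subgroup and the direct identity $[t,u_\alpha(y)] = u_\alpha((1-\alpha(t)^{-1})y)$ avoids that awkwardness entirely.

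One thing worth flagging: your explicit Bruhat decomposition
\[
[u_\alpha(a),u_{-\alpha}(b)]=u_\alpha\!\bigl(a^2b(1-ab)^{-1}\bigr)\,\alpha^\vee\!\bigl((1-ab)^{-1}\bigr)\,u_{-\alpha}\!\bigl(-ab^2(1-ab)^{-1}\bigr)
\]
shows that the inclusion $[\breve U_{\alpha,m_1},\breve U_{-\alpha,m_2}]\subseteq\breve T^{\alpha,m_1+m_2}$ in (iii) is not literally true as a statement about subgroups of $G(\breve k)$: the commutator genuinely has nontrivial unipotent components lying in $\breve U_{\alpha,2m_1+m_2}$ and $\breve U_{-\alpha,m_1+2m_2}$. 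What is true (and what you actually prove) is that the $T^\alpha$-component lands in $\breve T^{\alpha,m_1+m_2}$ while the unipotent factors lie strictly deeper (assuming $m_1+m_2>0$, which holds in every application in Lemmas~\ref{lm:comm}, \ref{lm:commutator_pairings} and Proposition~\ref{prop:commutator_producing_torus_element}, where they vanish in the relevant quotient $\bG_r$). So the isomorphism $\lambda_x$ should be read as $\xi\mapsto(\text{torus projection of }[\xi,x])$. Your write-up already says this implicitly (``the $T^\alpha$-component\ldots while the two non-torus factors lie in strictly deeper steps''), but it is worth stating outright that this is a small imprecision in the lemma as printed, harmless because the lemma is always applied after projecting to $\bG_r$.
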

\begin{proof}
(ii) follows from \cite[(6.2.1)]{BruhatT_72}. (i), (iii): By considering a morphism from $\SL_2$ to $G_{\breve k}$, whose image is generated by $U_{\pm \alpha}$ (as in \cite[(6.2.3) b)]{BruhatT_72}), and pulling back the valuation of the root datum along this morphism, it suffices to prove the same statement for $\SL_2(\breve k)$. This is an immediate computation.
\end{proof}

For two smooth (connected) closed subgroups $\bH_1$, $\bH_2$ of a connected linear algebraic group $\bG$ over a field, we denote by $[\bH_1,\bH_2]$ their commutator ``in the sense of group varieties'' as in \cite[\S2.3]{Borel_91} (it would be more precise to consider the scheme-theoretic commutator, but for our purposes this suffices).


\begin{lemma}\label{lm:comm}
Let $r \geq 2$ and $1 \leq a \leq r-1$. Let $\alpha \in \Phi$.
\begin{enumerate}[label=(\alph*)]
\item
If $\alpha$ is non-reductive, then $[\bG_r^{a+1}, \bU_{\alpha,r}^{r-a}] = 1$.

\item
If $\alpha$ is reductive, then $[\bG_r^a, \bU_{r,\alpha}^{r-a}] = 1$.
\end{enumerate}
\end{lemma}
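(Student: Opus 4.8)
The plan is to reduce the statement to commutation relations among the affine root subgroups $\breve U_{\alpha,m}$ and the torus filtration pieces $\breve T^r$, which are already controlled by Lemma \ref{lm:usual_commutator_relations}, and then translate back to the Moy--Prasad quotient $\bG_r$ via the explicit descriptions in Lemma \ref{lm:structure_of_rootgroup_subquotients}. Since $\bG_r^a$, $\bG_r^{a+1}$ and $\bU_{\alpha,r}^{r-a}$ are smooth subgroups of $\bG_{r,\overline{\FF}_q}$, it suffices to check the commutator is trivial on $\overline{\FF}_q$-points, i.e.\ to show that for every element $g$ of $\breve P_{\bf x}$ whose image lies in $\breve G_r^{a+1}$ (resp.\ $\breve G_r^a$) and every $u \in \breve U_{\alpha, m_\alpha + (r-a)-1}$ in the non-reductive case (resp.\ $u \in \breve U_{\alpha, m_\alpha + (r-a)}$ in the reductive case), the commutator $[g,u]$ already lies in $\breve P_{\bf x}^{(r-1)+}$, so that it dies in $\bG_r$.

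First I would use Lemma \ref{lm:GGr_generated_by_roots}(ii): $\bG_r^{a+1}$ (resp.\ $\bG_r^{a}$) is generated by $\bT_r^{a+1}$ and the $\bU_{\beta,r}^{a+1}$ (resp.\ the corresponding level-$a$ groups), so by bilinearity of the commutator it is enough to handle $g$ lying in one of these generating subgroups. This splits into the torus-versus-root case and the root-versus-root case. For the torus case I would apply Lemma \ref{lm:usual_commutator_relations}(i): $[\breve T^{s}, \breve U_{\alpha,m}] \subseteq \breve U_{\alpha, m+s}$, and then just compare the valuation bound $m+s$ against the threshold $m_\alpha + r$ (reductive) or $m_\alpha + r - 1$ (non-reductive) that cuts out $\breve P_{\bf x}^{(r-1)+} \cap U_\alpha(\breve k)$; using $m_\beta$-bookkeeping and the definition of $\bT_r^{a+1}$ one checks the sum of exponents lands at or above the threshold, with exactly one unit of room saved in the non-reductive case because the non-reductive root groups are ``compressed'' by one step in $\bG_r$. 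For the root-versus-root case with $\beta \neq \pm\alpha$ I would apply Lemma \ref{lm:usual_commutator_relations}(ii), writing the commutator inside the groups generated by $\breve U_{p\beta + q\alpha,\, p m_1 + q m_2}$, and then invoke Lemma \ref{lm:epsilons_and_ms} to control $p m_\beta + q m_\alpha - m_{p\beta + q\alpha}\geq 0$, which is precisely what is needed to ensure every such term again lies in $\breve P_{\bf x}^{(r-1)+}$. The case $\beta = -\alpha$ is handled by Lemma \ref{lm:usual_commutator_relations}(iii), landing the commutator in a piece $\breve T^{\alpha, m_1 + m_2}$ of the torus filtration, and again one compares $m_1 + m_2$ to $r$.

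The main obstacle I expect is the exponent bookkeeping that distinguishes part (a) from part (b): the extra room that makes $[\bG_r^{a+1}, \bU_{\alpha,r}^{r-a}]$ vanish for non-reductive $\alpha$ while one only gets $[\bG_r^{a}, \bU_{\alpha,r}^{r-a}]$ for reductive $\alpha$ comes entirely from the one-step discrepancy ($m_\alpha + r$ vs.\ $m_\alpha + r - 1$, and $0 < \varepsilon_\alpha < 1$) recorded in Lemmas \ref{lm:structure_of_rootgroup_subquotients} and \ref{lm:epsilons_and_ms}. Getting every ceiling/floor and every ``$+$'' in $\widetilde{\bR}$ right across all the sub-cases — in particular checking that when both $\alpha$ and the ambient generating root $\beta$ (or their positive combinations $p\beta+q\alpha$) switch between reductive and non-reductive, the inequalities still go the right way — is where the care is needed. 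I would organize this by fixing, once and for all, the threshold exponents that cut out $\breve P_{\bf x}^{(r-1)+}\cap U_\gamma(\breve k)$ and $\breve P_{\bf x}^{(r-1)+}\cap T(\breve k)$ from \eqref{eq:intersection_of_P_and_root}, and then each sub-case becomes a one-line comparison of integers. Everything else is routine.
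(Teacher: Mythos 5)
Your plan matches the paper's proof essentially step for step: pass to $\overline{\FF}_q$-points, reduce via Lemma~\ref{lm:GGr_generated_by_roots}(ii) to generators $\bT_r^{a+1}$ (resp.\ $\bT_r^a$) and $\bU_{\beta,r}^{a+1}$ (resp.\ $\bU_{\beta,r}^a$), handle the torus case by Lemma~\ref{lm:usual_commutator_relations}(i), the $\beta=-\alpha$ case by Lemma~\ref{lm:usual_commutator_relations}(iii), and the remaining $\beta\neq\pm\alpha$ cases by Lemma~\ref{lm:usual_commutator_relations}(ii) together with Lemma~\ref{lm:epsilons_and_ms}, comparing against the thresholds from Lemma~\ref{lm:structure_of_rootgroup_subquotients}. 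One small caution, since you flagged the reductive/non-reductive crossover as the delicate spot: in part~(a), when $\alpha,\beta$ are both non-reductive but $p\alpha+q\beta$ is reductive, the bound $pm_\alpha+qm_\beta-m_{p\alpha+q\beta}\geq 0$ from Lemma~\ref{lm:epsilons_and_ms} alone is \emph{not} enough --- you need $\lfloor p\varepsilon_\alpha+q\varepsilon_\beta\rfloor\geq 1$, which the paper gets by noting that $p\varepsilon_\alpha+q\varepsilon_\beta$ is a positive integer (positivity from $\varepsilon_\alpha,\varepsilon_\beta>0$, integrality because $p\alpha+q\beta$ is reductive).
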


\begin{proof} It suffices to prove the claims on $\overline{\FF}_q$-points. (a): By Lemma \ref{lm:GGr_generated_by_roots} it suffices to show to show that $[\breve T_r^{a+1}, \breve U_{\alpha,r}^{r-a}] = 1$ and that $[\breve U_{\beta,r}^{a+1}, \breve U_{\alpha,r}^{r-a}] = 1$ ($\forall \beta \in \Phi$) in $\bG_r$. By Lemma \ref{lm:structure_of_rootgroup_subquotients} $\breve T_r^{a+1}$ is the image in $\breve G_r$ of $\breve T^{a+1}$, $\breve U_{\alpha,r}^{r-a}$ is the image of $\breve U_{\alpha, m_{\alpha} + r - a - 1}$, and similar claims hold for all $\beta \in \Phi$. But $[\breve T^{a+1}, \breve U_{\alpha,  m_{\alpha} + r - a - 1}] \subseteq \breve U_{\alpha, r + m_{\alpha}}$ by Lemma \ref{lm:usual_commutator_relations}(i), and $\breve U_{\alpha, r + m_{\alpha}}$ maps to $1$ in $\bG_r$, so $[\breve T_r^{a+1}, \breve U_{\alpha,r}^{r-a}] = 1$ follows. Now assume that $\beta = -\alpha$. Then $-\alpha$ is non-reductive as $\alpha$ is, and by Lemma \ref{lm:usual_commutator_relations}(iii), $[\breve U_{-\alpha,m_{-\alpha} + a}, \breve U_{\alpha, m_{\alpha} + r - a - 1}] \subseteq \breve T^{\alpha, r + m_{\alpha} + m_{-\alpha} - 1} = \breve T^{\alpha, r}$ maps to $1$ in $\bG_r$. This shows $[\breve U_{-\alpha,r}^{a+1}, \breve U_{\alpha,r}^{r-a}] = 1$. Thus we can assume $\beta \in \Phi$, $\beta \neq -\alpha$. We have two cases. 

\medskip
\noindent\emph{Case: $\beta$ is reductive.} Then by Lemma \ref{lm:structure_of_rootgroup_subquotients}, $\breve U_{\beta,r}^{a+1}$ is the image in $\breve G_r$ of $\breve U_{\beta,m_{\beta} +  a + 1}$ and by Lemma \ref{lm:usual_commutator_relations}(ii) we have
\[
[\breve U_{\beta,m_{\beta} +  a + 1}, \breve U_{\alpha, m_{\alpha} + r - a - 1}] \subseteq \prod_{\substack{p,q \in \bZ_{\geq 1} \\ p\alpha + q\beta \in \Phi}} \breve U_{p\alpha + q\beta, p(m_{\alpha} + r - a - 1) + q(m_{\beta} + a + 1)}.
\]
To ensure that this product maps to $1$ in $\breve G_r$, it suffices to show that for all $p,q \in \bZ_{\geq 1}$ with $p\alpha + q\beta \in \Phi$, one has
$p(m_{\alpha} + r - a - 1) + q(m_{\beta} + a + 1) \geq m_{p\alpha + q\beta} + r$, or equivalently, 
\[ pm_{\alpha} + qm_{\beta} - m_{p\alpha + q\beta} + (p-1)(r - a - 1) + (q-1)(a + 1) \geq 0. \]
But this follows from Lemma \ref{lm:epsilons_and_ms}.

\medskip
\noindent \emph{Case: $\beta$ is non-reductive.} By Lemma \ref{lm:structure_of_rootgroup_subquotients}, $\breve U_{\beta,r}^{a+1}$ is the image in $\breve G_r$ of $\breve U_{\beta,m_{\beta} +  a}$ and by Lemma \ref{lm:usual_commutator_relations}(ii) we have
\[
[\breve U_{\beta,m_{\beta} +  a}, \breve U_{\alpha, m_{\alpha} + r - a - 1}] \subseteq \prod_{\substack{p,q \in \bZ_{\geq 1} \\ p\alpha + q\beta \in \Phi}} \breve U_{p\alpha + q\beta, p(m_{\alpha} + r - a - 1) + q(m_{\beta} + a)}.
\]
To show that the image of this product vanishes in $\bG_r$, we have to show that each single term does. Assume that $p\alpha + q\beta$ occurs in the product and is non-reductive. Then vanishing of $\breve U_{p\alpha + q\beta, p(m_{\alpha} + r - a - 1) + q(m_{\beta} + a)}$ in $\breve G_r$ amounts to the inequality
\[ pm_{\alpha} + qm_{\beta} - m_{p\alpha + q\beta} + (p-1)(r - a - 1) + (q-1)a \geq 0, \]
which holds true by Lemma \ref{lm:epsilons_and_ms}. Assume finally that $p\alpha + q\beta$ occurs in the product and is reductive. Then vanishing of $\breve U_{p\alpha + q\beta, p(m_{\alpha} + r - a - 1) + q(m_{\beta} + a)}$ in $\breve G_r$ amounts to the inequality
\[
pm_{\alpha} + qm_{\beta} - m_{p\alpha + q\beta} + (p-1)(r - a - 1) + (q-1)a \geq 1,
\]
or equivalently,
\[
\lfloor p\varepsilon_{\alpha} + q\varepsilon_{\beta} \rfloor + (p-1)(r - a - 1) + (q-1)a \geq 1,
\]
i.e. it suffices to show that $p\varepsilon_{\alpha} + q\varepsilon_{\beta} \geq 1$. But as $p\alpha + q\beta$ is reductive,
\begin{equation}\label{eq:residue_is_at_least_one}
\bZ \ni \langle p\alpha + q\beta, {\bf x} - {\bf x}_0 \rangle = p\langle \alpha, {\bf x} - {\bf x}_0 \rangle + q\langle \beta, {\bf x} - {\bf x}_0 \rangle = -pm_{\alpha} - qm_{\beta} + p \varepsilon_{\alpha} + q \varepsilon_{\beta}.
\end{equation}
As $-pm_{\alpha} - qm_{\beta} \in \bZ$, we deduce $p \varepsilon_{\alpha} + q \varepsilon_{\beta} \in \bZ$. On the other side $\varepsilon_{\alpha}, \varepsilon_{\beta} > 0$ (as $\alpha,\beta$ non-reductive), and hence $p \varepsilon_{\alpha} + q \varepsilon_{\beta} > 0$. Thus, $p \varepsilon_{\alpha} + q \varepsilon_{\beta} \geq 1$. This finishes the proof of (a).

\mbox{}

\noindent (b): We have $[\breve T^a, \breve U_{\alpha,m_{\alpha} + r - a}] \subseteq \breve U_{\alpha,m_{\alpha} + r}$ by Lemma \ref{lm:usual_commutator_relations}(i), and the latter group maps to $1$ in $\breve G_r$. Thus $[\breve T_r^a, \breve U_{\alpha,r}^{r-a}] = 1$. Further, Lemma \ref{lm:usual_commutator_relations}(iii) shows
\[ 
[\breve U_{-\alpha,m_{-\alpha} + a},\breve U_{\alpha,m_{\alpha} + r - a}] \subseteq \breve T^{\alpha, m_{\alpha} + m_{-\alpha} + r} = \breve T^{\alpha, r},
\]
which maps to $1$ in $\breve G_r$. Thus $[\breve U_{-\alpha,r}^a, \breve U_{\alpha,r}^{r-a}] = 1$. Finally, let $\beta \in \Phi$, $\beta \not= -\alpha$. Again we have two cases.

\medskip
\noindent\emph{Case: $\beta$ is reductive.} By Lemma \ref{lm:usual_commutator_relations}(ii),
\[
[\breve U_{\beta,m_{\beta} +  a}, \breve U_{\alpha, m_{\alpha} + r - a}] \subseteq \prod_{\substack{p,q \in \bZ_{\geq 1} \\ p\alpha + q\beta \in \Phi}} \breve U_{p\alpha + q\beta, p(m_{\alpha} + r - a) + q(m_{\beta} + a)},
\]
Now, by Lemma \ref{lm:epsilons_and_ms} we have
\[ p(m_{\alpha} + r-a) + q(m_{\beta} + a) \geq m_{p\alpha + q\beta} + r. \]
So, regardless of whether $p\alpha + q\beta$ is reductive or not, it follows that $\breve U_{p\alpha + q\beta, p(m_{\alpha} + r - a) + q(m_{\beta} + a)}$ maps to $1$ in $\breve G_r$, and hence $[\breve U_{\beta,r}^a, \breve U_{\alpha,r}^{r-a}] = 1$. 

\medskip
\noindent\emph{Case: $\beta$ is non-reductive.} By Lemma \ref{lm:usual_commutator_relations}(ii),
\[
[\breve U_{\beta,m_{\beta} +  a - 1}, \breve U_{\alpha, m_{\alpha} + r - a}] \subseteq \prod_{\substack{p,q \in \bZ_{\geq 1} \\ p\alpha + q\beta \in \Phi}} \breve U_{p\alpha + q\beta, p(m_{\alpha} + r - a) + q(m_{\beta} + a - 1)},
\]
and the proof can be finished exactly as in the ``\emph{$\beta$ non-reductive}''-case of part (a). \qedhere
\end{proof}

\subsection{Regularity of characters} 
Recall the notation $\cT$ from Section \ref{sec:subgroups_of_bGr}. Consider the norm map $N_{\sigma}^{\sigma^m} \from \cT(\overline \FF_q)^{\sigma^m} \to \cT(\overline \FF_q)^{\sigma} = \cT(\FF_q)$ given by $t \mapsto t \sigma(t) \cdots \sigma^{m-1}(t)$. Let $r \in \bZ_{\geq 1}$ be fixed. Following Lusztig \cite[1.5]{Lusztig_04}, we say a character $\chi \colon \cT(\FF_q) \rightarrow \overline{\bQ}_{\ell}^{\times}$ is \emph{regular} if for any $\alpha \in \Phi$ and any $m \geq 1$ such that $\sigma^m(\alpha) = \alpha$, the restriction of $\chi \circ N_{\sigma}^{\sigma^m}$ to $\cT^{\alpha}(\overline \FF_q)^{\sigma^m}$ is non-trivial. A character $\chi$ of $\breve T_r^{\sigma}$ is called \emph{regular} if its restriction $\chi|_{\mathcal{T}(\FF_q)}$ is regular. 

Let $\theta \colon T(k) \rightarrow \overline{\bQ}_{\ell}^{\times}$ be a character of level $r-1$; that is, $\theta$ is trivial on $\breve{T}^{(r-1)+} \cap T(k)$ but nontrivial on $\breve T^{(r-2)+}$. Its restriction to $\breve T^0 \cap T(k)$ can be viewed as a character $\chi$ of $\breve T_r^\sigma = (\breve T^0/\breve T^{(r-1)+})^{\sigma}$. We say $\theta$ is \emph{regular} if $\chi$ is.

\begin{rem}
When $G$ is an inner form of $\GL_n(K)$ and $T$ is a maximal nonsplit unramified torus, then $T(k) \cong L^\times$, where $L$ is the degree-$n$ unramified extension of $k$. If $\theta \from L^\times \to \cool^\times$ is a smooth character trivial on $(\breve T^r)^{\sigma} = U_L^r = 1 + \varpi^r \cO_L$, then $\theta$ being \textit{regular} is the same as being \textit{primitive} in the sense of Boyarchenko--Weinstein \cite[Section 7.1]{BoyarchenkoW_16}. This is closely related to $\theta$ being \textit{minimal admissible} in the sense of Bushnell--Henniart \cite[Section 1.1]{BushnellH_05}. We refer to \cite[Remark 12.1]{CI_ADLV} for a more precise comparison. \hfill $\Diamond$
\end{rem}

\section{The scheme $\Sigma$}\label{sec:Sigma}

We use notation from Section \ref{sec:Preliminaries}. We fix a point ${\bf x} \in \sB_k$, an integer $r \geq 1$, and two maximal tori $T,T'$ of $G$ defined over $k$, split over $\breve k$, and such that ${\bf x} \in \sA_{T,\breve k} \cap \sA_{T,\breve k}$. Further, we fix choose pairs of unipotent radicals of opposite Borels $U,U^-$ (attached to $T$) and $U',U^{\prime,-}$ (attached to $T'$) in $G_{\breve k}$. The construction from Section \ref{sec:subgroups_of_bGr}, this gives the $\FF_q$-groups $\bG_r, \bT_r, \bU_r, \bU_r^-, \bT_r', \bU_r', \bU_r^{\prime,-}$. 

\subsection{Definition of $\Sigma$, $\Sigma_w$} \label{sec:def_Sigma_Sigma_w}

Attached to $(T, U), (T', U')$, we consider the following locally closed reduced subscheme of $\sigma(\bU_r) \times \sigma(\bU'_r) \times \bG_r$ whose $\overline \FF_q$-points are given by
\begin{equation*}
\Sigma(\overline \FF_q) \colonequals \left\{(x,x',y) \in \sigma(\breve U_r) \times \sigma(\breve U'_r) \times \breve G_r \colon x\sigma(y) = y x' \right\}.
\end{equation*}
The scheme $\Sigma$ decomposes into a disjoint union of locally closed subsets, $\Sigma = \coprod_{w \in W_{\bf x}(T, T')} \Sigma_w$, where $\Sigma_w$ is the reduced subscheme of $\Sigma$, whose $\overline{\FF}_q$-points are given by 
\begin{equation*}
\Sigma_w(\overline{\FF}_q) \colonequals \left\{ (x,x',y) \in \Sigma(\overline{\FF}_q) \colon y \in \bG_{r,w}(\overline{\FF}_q) \right\},
\end{equation*}
where $\bG_{r,w}$ is as in Section \ref{sec:Weyl_Bruhat}. The group $\breve T_r^{\sigma} \times \breve T_r^{\prime\sigma}$ acts on $\Sigma$ and each $\Sigma_w$ by 
\[
(t,t') \colon (x,x',y) \mapsto (txt^{-1}, t'x't^{\prime -1}, tyt^{\prime -1}). 
\]
The following lemma is completely analogous to \cite[Lemma 1.4]{Lusztig_04}.

\begin{lm}\label{lm:intertwiner_of_characters_nonzero_coh}
Let $r \geq 2$ and let $\theta \colon \breve T_r^{\sigma} \rar \cool^{\times}$, $\theta' \colon \breve T_r^{\prime \sigma} \rar \cool^{\times}$ be characters such that $H_c^j(\Sigma)_{\theta^{-1},\theta'} \neq 0$ for some $j \in \bZ$. Then there exist $n \geq 1$ and $g \in N_{\bG_r}(\bT'_r, \bT_r)^{\sigma^n}$ such that ${\rm Ad}(g)$ carries $\theta|_{\cT^{\sigma}} \circ N_{\sigma}^{\sigma^n}$ to $\theta'|_{\cT^{\prime \sigma}} \circ N_{\sigma}^{\sigma^n}$.
\end{lm}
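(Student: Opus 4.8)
The plan is to mimic the structure of Lusztig's argument in \cite[Lemma 1.4]{Lusztig_04}, exploiting the $\breve T_r^\sigma \times \breve T_r^{\prime\sigma}$-action on $\Sigma$ and the extra structure coming from the filtration subgroup $\cT = \bT_r^{r-1}$. First I would fix $w \in W_{\bx}(T,T')$ such that $H_c^j(\Sigma_w)_{\theta^{-1},\theta'} \neq 0$; this is possible since $\Sigma = \coprod_w \Sigma_w$ is a decomposition into locally closed pieces, so its compactly supported cohomology has a filtration whose graded pieces are (up to shift) the $H_c^\bullet(\Sigma_w)$, and the torus action respects this decomposition. Using the Bruhat-cell description $\bG_{r,w} = \bU_r \bK_r^1 \dot w \bT_r' \bU_r'$ from Lemma \ref{lm:Bruhat_cells_decomp}, I would parametrize points of $\Sigma_w$: writing $y = u\,\kappa\,\dot w\,\tau\,u'$ with $u \in \bU_r$, $\kappa \in \bK_r^1$, $\tau \in \bT_r'$, $u' \in \bU_r'$, the equation $x\sigma(y) = yx'$ with $x \in \sigma(\bU_r)$, $x' \in \sigma(\bU_r')$ should, after absorbing the unipotent factors, exhibit $\Sigma_w$ as (an iterated affine-space bundle over) a variety built out of $\cT$, $\bT_r'$ and $\dot w$. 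The point is that the "interesting" part of the cohomology is concentrated in the torus directions, and the unipotent directions contribute only shifts and Tate twists, so $H_c^\bullet(\Sigma_w)_{\theta^{-1},\theta'}$ is governed by the cohomology of a Lang-type variety for $\cT$ relative to $\dot w$.

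Next I would reduce the nonvanishing of the $(\theta^{-1},\theta')$-isotypic component to a statement purely about $\cT$. Concretely, the subgroup $\cT = \bT_r^{r-1}$ is central in $\bG_r$ modulo $\bG_r^{r-1}$-type considerations (more precisely its commutators with root groups land in the top filtration step and die, by Lemma \ref{lm:comm}), so the action of $(t,t')$ with $t \in \cT^\sigma$, $t' \in \cT^{\prime\sigma}$ on the $y$-coordinate simplifies drastically; the projection $\Sigma_w \to \bG_{1,w}$ together with the filtration $\bG_r \supseteq \bG_r^1 \supseteq \cdots$ lets me run a descending induction isolating the last filtration step, where $\cT$ and $\cT^\alpha$ live. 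On that step the relation $x\sigma(y) = yx'$ becomes linear, and $H_c^\bullet$ of the resulting torsor decomposes as a sum over characters, with the $(\theta^{-1},\theta')$-part nonzero precisely when $\theta$ and $\theta'$ are related through $\dot w$ and the Lang/norm map: that is, when there is $n \geq 1$ and a representative $g$ of $w$ (or of a $W$-translate, which is where the norm $N_\sigma^{\sigma^n}$ enters, since $\dot w$ need only be $\sigma^n$-fixed for suitable $n$) in $N_{\bG_r}(\bT_r',\bT_r)^{\sigma^n}$ with ${\rm Ad}(g)\bigl(\theta|_{\cT^\sigma}\circ N_\sigma^{\sigma^n}\bigr) = \theta'|_{\cT^{\prime\sigma}}\circ N_\sigma^{\sigma^n}$. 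The torus representatives $\cT^\alpha$ that appear in the definition of regularity are exactly the ones showing up when one analyzes which characters of $\cT^{\sigma^n}$ can occur, via the commutator isomorphisms $\lambda_x$ of Lemma \ref{lm:usual_commutator_relations}(iii).

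The main obstacle I anticipate is the bookkeeping in the general (non-reductive $P$) setting: in Lusztig's situation $\bG_r$ is built from a \emph{reductive} group over $\cO_k$, so every root is reductive and the filtration steps $\bG_r^a/\bG_r^{a+1}$ are uniformly described; here one must carefully separate reductive from non-reductive roots (Definition \ref{def:reductive_nonreductive_roots}) and track the shifted indices $m_\alpha, \varepsilon_\alpha$ throughout, which is exactly why Lemma \ref{lm:comm} was proved with its two cases. The delicate point is verifying that, even with non-reductive roots present, the unipotent part of $\Sigma_w$ still fibers over the torus part in affine spaces (so that it contributes only shifts), and that the $\bK_r^1$-factor — which in the non-reductive case can be non-abelian, as the Remark after Lemma \ref{lm:GGr_generated_by_roots} warns — does not obstruct the isotypic-component computation. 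I would handle this by filtering $\bK_r^1$ by the $\bG_r^a$ and checking degeneration of the relevant spectral sequence step by step, using Lemma \ref{lm:comm} to ensure the torus acts trivially on each graded piece of the unipotent directions. Once that is in place, the identification of the surviving character data with the condition in the statement is essentially formal, following \cite[Lemma 1.4]{Lusztig_04} verbatim.
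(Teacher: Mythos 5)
Your high-level direction is right --- the paper's own proof of Lemma~\ref{lm:intertwiner_of_characters_nonzero_coh} is precisely ``Lusztig's proof of \cite[Lemma 1.4]{Lusztig_04} applies,'' and it singles out exactly one point requiring care in the non-reductive parahoric setting. But your sketch misdescribes both the mechanism of Lusztig's argument and the technical lemma on which it rests, in a way that would need repair before becoming a proof.

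First, the mechanism. Lusztig does not run a descending induction through the filtration steps of $\bG_r^1$ and a spectral sequence over a fibration of $\Sigma_w$. After reducing to a fixed $w$ and passing to a covering $\widehat\Sigma_w$ via the Bruhat-cell description, he \emph{extends} the $\cT(\FF_q)\times\cT'(\FF_q)$-action on that covering to an action of a \emph{connected} algebraic group, and then uses that a connected group acts trivially on compactly supported cohomology. The desired relation between $\theta|_{\cT^\sigma}$ and $\theta'|_{\cT'^{\sigma}}$ drops out from the resulting constraint, and the norm $N_\sigma^{\sigma^n}$ enters because the relevant connected subgroup only becomes visible after passing to $\sigma^n$-fixed points for a suitable $n$. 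Your text gestures at this (``simplifies drastically,'' ``which is where the norm enters'') but then replaces it by a spectral-sequence degeneration argument that is not what is being done and whose convergence to the correct isotypic-component statement you have not justified; in particular it is not obvious your approach would naturally produce the ``$\circ\,N_\sigma^{\sigma^n}$'' form of the conclusion.

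Second, and more concretely, you attribute the centrality of $\cT$ in $\bG_r^1$ to Lemma~\ref{lm:comm}. This is not what that lemma says: Lemma~\ref{lm:comm} controls commutators $[\bG_r^a,\bU_{\alpha,r}^{r-a}]$ of a root subgroup with a filtration step, not commutators of the \emph{torus} piece $\cT=\bT_r^{r-1}$ with $\bG_r^1$. The statement actually needed (and the only point the paper flags as requiring verification beyond Lusztig) is $[\cT,\bG_r^1]=1$, equivalently $[\breve T^{(r-1)}/\breve T^{(r-1)+},\,\breve P_{\bx}^{0+}/\breve P_{\bx}^{(r-1)+}]=1$, which the paper deduces from the Moy--Prasad containment $[P_{\bx}^{0+},P_{\bx}^{(r-1)}]\subseteq P_{\bx}^{(r-1)+}$. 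One could also extract it from Lemma~\ref{lm:usual_commutator_relations}(i) together with Lemma~\ref{lm:structure_of_rootgroup_subquotients}, but not from Lemma~\ref{lm:comm}. So the one genuine non-reductive subtlety the lemma hinges on is exactly the spot where your citation is wrong.
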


\begin{proof}
The proof of \cite{Lusztig_04} applies. The only point where one must be careful is the claim that $\cT$ and $\cT'$ centralize $\bG^1_r$ (this is used to extend the action of $\cT(\FF_q) \times \cT'(\FF_q)$ on a covering of $\Sigma_w$ to an action of a connected group). Passing to $\overline \FF_q$-points, this is the claim that the subgroups $\breve T^{(r-2)+}/\breve T^{(r-1)+} = \breve T^{(r-1)}/\breve T^{(r-1)+}$ and $\breve T^{\prime (r-2)+}/\breve T^{\prime (r-1)+} = \breve T^{\prime (r-1)}/\breve T^{\prime (r-1)+}$ centralize $\breve P_{\bx}^{0+}/\breve P_{\bx}^{(r-1)+}$. By general properties of the Moy--Prasad filtration, $[P_{\bx}^{0+}, P_{\bx}^{(r-1)}] \subseteq P_{\bx}^{(r-1)+}$, which verifies the claim.
%
\end{proof}

\subsection{Alternating sum of cohomology of $\Sigma$}

Our main result about $\Sigma$ is the following generalization of \cite[Lemma 1.9]{Lusztig_04}.

\begin{theorem}\label{thm:altsum_coh_Sigma}
Let $\theta$ and $\theta'$ be characters of $\breve T_r^{\sigma}$ and $\breve T_r^{\prime \sigma}$ respectively, and assume that $\theta$ is regular. Then 
\[
\sum_{i \in \bZ} \dim H_c^i(\Sigma, \cool)_{\theta^{-1},\theta'} = \#\{ w \in W_{\bf x}(T,T')^{\sigma} \colon \theta \circ {\rm Ad}(\dot w) = \theta' \}.
\]
\end{theorem}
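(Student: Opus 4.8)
The plan is to follow Lusztig's strategy from \cite[Lemma 1.9]{Lusztig_04} as closely as possible, using the Bruhat stratification $\Sigma = \coprod_{w} \Sigma_w$ established in Section \ref{sec:def_Sigma_Sigma_w} and reducing the computation on each stratum. By additivity of compactly supported cohomology in the stratification, it suffices to compute $\sum_i \dim H_c^i(\Sigma_w, \cool)_{\theta^{-1},\theta'}$ for each $w \in W_{\bf x}(T,T')$ and show it equals $1$ when $w$ is $\sigma$-fixed and $\theta \circ \mathrm{Ad}(\dot w) = \theta'$, and $0$ otherwise. First I would use Lemma \ref{lm:Bruhat_cells_decomp}, which gives $\bG_{r,w} = \bU_r \bK_r^1 \dot w \bT_r' \bU_r'$, to parametrize points of $\Sigma_w$: writing $y = u \kappa \dot w t' u'$ with $u \in \bU_r$, $\kappa \in \bK_r^1$, $t' \in \bT_r'$, $u' \in \bU_r'$, one can use the left action of $\breve U_r^\sigma$ and right action of $\breve U_r^{\prime\sigma}$ (which do not change the relevant isotypic component since $\theta,\theta'$ are characters of the tori) to rigidify, ultimately exhibiting $\Sigma_w$ up to an affine-space fibration (contributing no cohomology beyond a shift) as governed by the equation on $\bK_r^1$ and $\bT_r'$ coming from $x\sigma(y) = yx'$.

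The core of the argument is then the analysis of a variety of the shape considered by Lusztig: after the reductions, the $\theta^{-1},\theta'$-isotypic cohomology of $\Sigma_w$ is identified with the cohomology of a quotient $\bK_r^1 \backslash (\text{something})$ or more precisely a Lang-type torsor over a product of groups $\cT^\alpha$, twisted by $\dot w$. This is where the regularity of $\theta$ enters decisively: exactly as in \cite[1.5--1.9]{Lusztig_04}, one decomposes $\cT = \bT_r^{r-1}$ into the pieces $\cT^\alpha$ indexed by roots and uses that the restriction of $\theta \circ N_\sigma^{\sigma^m}$ to each $\cT^\alpha(\overline\FF_q)^{\sigma^m}$ is nontrivial to kill the cohomology of all the ``off-diagonal'' root contributions, leaving only the diagonal term which survives precisely when $\dot w$ conjugates $\theta$ to $\theta'$ and $w$ is $\sigma$-stable (so that the relevant Lang map on a torus is defined over $\FF_q$, yielding a one-dimensional isotypic piece). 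The key technical inputs that make Lusztig's argument go through in the parahoric setting are the commutation relations of Lemma \ref{lm:comm} and the structural Lemma \ref{lm:structure_of_rootgroup_subquotients}: these replace the simpler relations available when $P$ is reductive over $\cO_k$, and they are needed to control how $\bK_r^1$ and the $\cT^\alpha$ interact and to ensure the relevant fibrations are affine spaces.

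I expect the main obstacle to be bookkeeping around the non-reductive roots. In Lusztig's original setting all roots behave uniformly, but here Definition \ref{def:reductive_nonreductive_roots} splits $\Phi$ into reductive and non-reductive roots, and Lemma \ref{lm:comm} has two separate cases with different index shifts ($[\bG_r^a, \cdot]$ versus $[\bG_r^{a+1}, \cdot]$). One must check that the various filtration-compatible actions used to build the affine fibrations — in particular the action of $\cT$ and $\cT'$ on the covering of $\Sigma_w$, which by Lemma \ref{lm:intertwiner_of_characters_nonzero_coh}'s proof relies on $\cT, \cT'$ centralizing $\bG_r^1$ — still produce connected unipotent groups acting with affine-space quotients when the torus filtration steps and root filtration steps are misaligned by the $\varepsilon_\alpha$'s. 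Once the fibration structure is confirmed, the cohomological vanishing is formal from regularity via the standard argument that a nontrivial character of a finite group acting freely on (the $\overline\FF_q$-points of) a variety forces the isotypic cohomology to vanish when combined with the Künneth decomposition over roots, exactly mirroring \cite[Lemma 1.9]{Lusztig_04}.
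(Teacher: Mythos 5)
Your high-level strategy --- reduce over the Bruhat strata $\Sigma_w$ via Lemma \ref{lm:Bruhat_cells_decomp}, use regularity of $\theta$ to kill all but the diagonal contribution --- matches the paper's, but your description glosses over the two steps where the real work happens, and one of them contains the paper's main new technical idea, which you do not identify. First, the paper does not directly compute cohomology of $\Sigma_w$; it replaces $\Sigma_w$ by a covering $\widehat\Sigma_w$ with an explicit $\bK_r^1$-coordinate $z$, and then partitions $\widehat\Sigma_w = \widehat\Sigma_w' \sqcup \widehat\Sigma_w''$ according to $z \neq 1$ versus $z = 1$. The $z = 1$ stratum is a Lang-type torsor and accounts for the $\#\{w : \theta\circ\mathrm{Ad}(\dot w)=\theta'\}$ on the right side, exactly as in Lusztig; the $z\neq 1$ stratum must be shown to contribute zero, and this is where all of Sections \ref{sec:filtration_general}--\ref{sec:stratification_on_Kr1} are consumed. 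Your proposal doesn't reflect this dichotomy, so the logic of ``which piece gives $1$ and which piece vanishes'' is not actually set up.

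Second, and more seriously, you write that Lemmas \ref{lm:comm} and \ref{lm:structure_of_rootgroup_subquotients} are ``the key technical inputs'' that make Lusztig's argument go through, but this is not enough. In the hyperspecial case $\bG_2^1$ is abelian, which is what lets Lusztig write elements of $\bK_r^1$ as products of root-group components, define the sets $A_z$, and build the commutator pairings $\lambda_z$. In the parahoric case $\bG_2^1$ is generally \emph{not} abelian, and the entire apparatus collapses without the refined filtration $\{\bG_r^{a,i}\}$ (defined via the subgroups $H(\varepsilon_i)$ in Section \ref{sec:filtration_general}), the abelianness of its graded pieces (Lemma \ref{lm:g_filtration_abelian}), the bilinear commutator pairings compatible with it (Lemma \ref{lm:commutator_pairings}), the refined stratification of $\bK_r^1\smallsetminus\{1\}$ by pairs $(a,I)$ (Lemma \ref{lm:Az} and equation \eqref{eq:stratification_of_Kh1}), and the isomorphism $\lambda_z$ of Proposition \ref{prop:commutator_producing_torus_element}. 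You correctly anticipate that non-reductive roots cause trouble, but you do not propose the filtration that resolves it, and your ``Künneth decomposition over roots'' plus ``free finite group action forces vanishing'' does not describe the actual mechanism: the paper extends the $\cT'(\FF_q)$-action on $\widehat\Sigma_w^{\prime,a,I}$ to an action of a possibly disconnected algebraic group $\cH'$, uses that the identity component $\cH'^\circ$ acts trivially on cohomology, pushes a norm $N_\sigma^{\sigma^m}$ from $\cT^\alpha$ into $\cT'(\FF_q)\cap\cH'^{\prime\circ}$, and derives a contradiction with regularity. As stated, ``a nontrivial character of a finite group acting freely forces vanishing'' is false without the connected-group extension; that extension is exactly what $\lambda_z$ and the section $\psi$ provide.
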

\begin{proof}
Since $\Sigma = \bigsqcup_w \Sigma_w$, it is enough to show that $\sum_{i\in \bZ} (-1)^i \dim H_c^i(\Sigma_w, \overline{\mathbb{Q}}_{\ell})_{\theta^{-1},\theta'}$ is $1$ if $w \in W_{\bf x}(T,T')^\sigma$ and $\theta \circ Ad(\dot w) = \theta'$, and is $0$ otherwise. Fix a $w \in W_{\bf x}(T,T')$. Let $\widehat \Sigma_w$ be the locally closed reduced subscheme of $\sigma(\bU_r) \times \sigma(\bU_r') \times \bU_r \times \bU_r' \times \bK_r^1 \times \bT_r'$ such that
\begin{align*}
\widehat \Sigma_w(\overline \FF_q) = \{ (x,x',u,u',z,\tau') \in \sigma(\breve U_r) \times \sigma(\breve U_r') \times \breve U_r \times &\breve U_r' \times \breve K_r^1 \times \breve T_r' \colon \\
&x\sigma(uz\dot{w} \tau' u') = uz\dot{w} \tau' u' x' \},
\end{align*}
and define an action of $\breve T_r^\sigma \times \breve T_r^{\prime \sigma}$ by:
\begin{equation}\label{eq:torus_action_on_widehatSigma}
(t,t') \colon (x,x',u,u',z,\tau') \mapsto (txt^{-1}, t'x't^{\prime -1}, tut^{-1}, t'u't^{\prime -1}, tzt^{-1}, \dot{w}^{-1}t\dot{w}\tau' t^{\prime -1}).
\end{equation}
As the projection $\widehat \Sigma_w \rightarrow \Sigma_w$ is Zariski-locally trivial fibration (as being Zariski-locally trivial is preserved under base change, and $\Sigma_w = \Sigma \times_{\bG_r} \bG_{r,w}$, this follows from Lemma \ref{lm:Bruhat_cells_decomp}), the alternating sum of the cohomology does not change if we pass from $\Sigma_w$ to $\widehat \Sigma_w$. Thus to finish the proof of the theorem it is enough to show that
\begin{equation}\label{eq:claim_for_coh_of_widehatSigma}
\sum_{i\in \bZ} (-1)^i \dim H_c^i(\widehat \Sigma_w, \overline{\mathbb{Q}}_{\ell})_{\theta^{-1},\theta'} = \begin{cases} 1 & \text{if $w \in W_{\bf x}(T,T')^\sigma$ and $\theta \circ Ad(\dot w) = \theta'$,} \\ 0 & \text{otherwise.} \end{cases}
\end{equation}
We make the change of variables replacing $x\sigma(u)$ by $x$ and $x'\sigma(u')^{-1}$ by $x'$, and rewrite $\widehat \Sigma_w(\overline \FF_q)$ as
\begin{equation*}
\widehat \Sigma_w(\overline \FF_q) = \{ (x,x',u,u',z,\tau') \in \sigma(\breve U_r) \times \sigma(\breve U_r') \times \breve U_r \times \breve U_r' \times \breve K_r^1 \times \breve T_r' \colon x\sigma(z\dot{w} \tau') = uz\dot{w} \tau' u' x' \},
\end{equation*}
and the torus action is still given by \eqref{eq:torus_action_on_widehatSigma}.
Define a partition into locally closed subsets $\widehat \Sigma_w = \widehat \Sigma_w' \sqcup \widehat \Sigma_w^{\prime\prime}$ by
\begin{align*}
\widehat \Sigma_w'(\overline{\FF}_q) &= \{ (x,x',u,u',z,\tau') \in \widehat \Sigma_w(\overline{\FF}_q) \colon z \neq 1 \}, \\
\widehat \Sigma_w^{\prime\prime}(\overline{\FF}_q) &= \{ (x,x',u,u',z,\tau') \in \widehat \Sigma_w(\overline{\FF}_q) \colon z = 1 \}. 
\end{align*}
Both subsets are stable under the $\breve T_r^\sigma \times \breve T_r^{\prime \sigma}$-action. Now, 
\begin{equation*}
\sum_{i \in \bZ} (-1)^i \dim H_c^i(\widehat \Sigma_w'', \overline{\mathbb{Q}}_{\ell})_{\theta^{-1}, \theta'} = \begin{cases}
1 & \text{if $w \in W_{\bx}(T, T')^\sigma$ and $\theta \circ Ad(\dot w) = \theta'$,} \\
0 & \text{otherwise,}
\end{cases}
\end{equation*}
has literally the same proof as the claim (b) in the proof of \cite[Lemma 1.9]{Lusztig_04}. Further, we show in Section \ref{sec:hatSigmaprime} (after some preparations in Sections \ref{sec:filtration_general}-\ref{sec:stratification_on_Kr1}), under the assumption that $\theta$ is regular, that
\begin{equation}\label{e:prime}
\sum_{i \in \bZ} (-1)^i \dim H_c^i(\widehat \Sigma_w', \overline{\mathbb{Q}}_{\ell})_{\theta^{-1}, \theta'} = 0,
\end{equation}
so \eqref{eq:claim_for_coh_of_widehatSigma} holds.
\end{proof}

\subsection{Filtration of $\bG_{a+1}^a$}\label{sec:filtration_general}

The main difference of the present article to \cite{Lusztig_04}, is that $\bG_2^1$ is not abelian if (and only if) $P_{\bf x}$ is not reductive, i.e., if ${\bf x}$ is not a hyperspecial point. To deal with this problem, we need a refinement of the filtration of $\bG_r^1$ by its subgroups $\bG_r^a$ for $1 \leq a \leq r-1$. For $a\geq 1$, we define a filtration of $\bG^a_{a+1}$ as follows: let 
\begin{equation*}
H(1) \colonequals \text{subgroup of $\bG^a_{a+1}$ generated $\bT_{a+1}^a$ and $\bU_{\alpha,a+1}^a$ for all reductive $\alpha \in \Phi$},
\end{equation*}
and for all $0 \leq \varepsilon < 1$, let
\begin{equation*}
H(\varepsilon) \colonequals \text{subgroup of $\bG^a_{a+1}$ generated by $H(1)$ and all $\bU_{\alpha,a+1}^a$ for $\alpha \in \Phi$, satisfying $\varepsilon_{\alpha} \geq \varepsilon$}.
\end{equation*}
Note that $\bT_{a+1}^a \subseteq H(1) \subseteq H(\varepsilon') \subseteq H(\varepsilon) \subseteq \bG_a^{a+1}$ for all $1 > \varepsilon' \geq \varepsilon > 0$. Moreover, there are only finitely many values of $\varepsilon$ (``jumps'') satisfying $H(\varepsilon) \supsetneq \bigcup_{\varepsilon' > \varepsilon} H(\varepsilon')$. We denote these jumps by $1 \equalscolon \varepsilon_{s+1} > \varepsilon_s > \dots > \varepsilon_1 > 0$ for some $s \geq 0$ (thus $1$ is a  jump by definition). The jumps are independent of $a$. We have $H(\varepsilon_1) = \bG_{a+1}^a$. For $a \leq r-1$, let $p \colon \bG_r^a \tar \bG_{a+1}^a$ be the natural projection, and for $s+1 \geq i \geq 1$, put 
\[
\bG_r^{a,i} \colonequals p^{-1}(H(\varepsilon_i)).
\]
For convenience, we put $\bG_r^{a,s+2} \colonequals \bG_r^{a+1}$. This defines a refinement $\{\bG_r^{a,i}\}_{\substack{r-1\geq a \geq 1\\ s+2\geq i\geq 1}}$ of the filtration $\{\bG_r^a\}_{r-1\geq a\geq 1}$ of $\bG_r^1$, decreasing with respect to the lexicographical ordering on pairs $(a,i)$. For $s+1 \geq i \geq 1$, let $\Phi_i$ be the set of roots ``appearing'' in $H(\varepsilon_i)/H(\varepsilon_{i+1})$:
\[
\Phi_i \colonequals 
\begin{cases}
\{\alpha \in \Phi : \varepsilon_\alpha = 0\} & \text{if $i = s+1$,} \\
\{\alpha \in \Phi : \varepsilon_\alpha = \varepsilon_i\} & \text{if $s \geq i \geq 1$}.
\end{cases}
\]

\begin{lm}\label{lm:g_filtration_abelian}
Let $r \geq 2$ and $r-1 \geq  a \geq 1$.
\begin{itemize}
\item[(i)] Let $a \geq 2$. Then $\bG_r^a/\bG_r^{a+1} = \bG_{a+1}^a$ is abelian, and in particular, for $s+1 \geq i \geq 1$, $\bG_r^{a,i}/\bG_r^{a,i+1}$ is abelian.
\item[(ii)] Let $a = 1$ and $s+1 \geq i \geq 1$. Then $\bG_r^{1,i}$ is normal in $\bG_r^1$ and the quotient $\bG_r^{1,i}/\bG_r^{1,i+1}$ is abelian.
\end{itemize}
\end{lm}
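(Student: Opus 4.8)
The plan is to identify the relevant subquotients of $\bG_r$ with subquotients of the Moy--Prasad filtration $\{\breve P_{\bx}^{r}\}$ on $\breve P_{\bx}$ and then to invoke the standard facts that each $\breve P_{\bx}^{r}$ is normal in $\breve P_{\bx}$ and that $[\breve P_{\bx}^{r},\breve P_{\bx}^{s}]\subseteq\breve P_{\bx}^{r+s}$. Since all groups in sight are smooth it suffices to argue on $\overline{\FF}_q$-points, where by Section~\ref{sec:groups_positive_loops} we have $\bG_r^{a}(\overline{\FF}_q)=\breve P_{\bx}^{(a-1)+}/\breve P_{\bx}^{(r-1)+}$, and hence $\bG_{a+1}^{a}(\overline{\FF}_q)=\breve P_{\bx}^{(a-1)+}/\breve P_{\bx}^{a+}$. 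For part~(i): when $a\geq 2$ one has $2a-2\geq a$, so $[\breve P_{\bx}^{(a-1)+},\breve P_{\bx}^{(a-1)+}]\subseteq\breve P_{\bx}^{(2a-2)+}\subseteq\breve P_{\bx}^{a+}$, which shows $\bG_r^a/\bG_r^{a+1}=\bG_{a+1}^a$ is abelian; each $\bG_r^{a,i}/\bG_r^{a,i+1}\cong H(\varepsilon_i)/H(\varepsilon_{i+1})$ is a subquotient of $\bG_{a+1}^a$, hence abelian.

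For part~(ii), the key step is to show that $H(\varepsilon_i)$ is precisely the image of $\breve P_{\bx}^{\varepsilon_i}$ in $\bG_2^1(\overline{\FF}_q)=\breve P_{\bx}^{0+}/\breve P_{\bx}^{1+}$; granting this, $\bG_r^{1,i}(\overline{\FF}_q)=\breve P_{\bx}^{\varepsilon_i}/\breve P_{\bx}^{(r-1)+}$ for $1\leq i\leq s+1$ (recall $\varepsilon_{s+1}=1$), while $\bG_r^{1,s+2}=\bG_r^{2}$ has $\overline{\FF}_q$-points $\breve P_{\bx}^{1+}/\breve P_{\bx}^{(r-1)+}$. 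To prove the identification, write $\breve P_{\bx}^{\varepsilon_i}$ as generated by $\breve T^{\varepsilon_i}=\breve T^{1}$ and the affine root groups $\breve U_{\alpha,\,m_\alpha+\lceil\varepsilon_i-\varepsilon_\alpha\rceil}$ ($\alpha\in\Phi$), and compute the image of each generator in $\breve P_{\bx}^{0+}/\breve P_{\bx}^{1+}$ by \eqref{eq:intersection_of_P_and_root} and Lemma~\ref{lm:structure_of_rootgroup_subquotients}: $\breve T^{1}$ maps onto $\bT_2^1$, a reductive $\alpha$ contributes $\bU_{\alpha,2}^1$, a non-reductive $\alpha$ with $\varepsilon_\alpha\geq\varepsilon_i$ contributes $\bU_{\alpha,2}^1$, and a non-reductive $\alpha$ with $\varepsilon_\alpha<\varepsilon_i$ contributes the trivial subgroup; comparing with the definition of $H(\varepsilon_i)$ gives the claim. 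Then normality of $\bG_r^{1,i}$ in $\bG_r^1$ is immediate from normality of $\breve P_{\bx}^{\varepsilon_i}$ in $\breve P_{\bx}$, and for the abelianness of $\bG_r^{1,i}/\bG_r^{1,i+1}$, whose $\overline{\FF}_q$-points are $\breve P_{\bx}^{\varepsilon_i}/\breve P_{\bx}^{\varepsilon_{i+1}}$ (with $\varepsilon_{s+2}\colonequals 1{+}$), one uses $[\breve P_{\bx}^{\varepsilon_i},\breve P_{\bx}^{\varepsilon_i}]\subseteq\breve P_{\bx}^{2\varepsilon_i}$ and checks $\breve P_{\bx}^{2\varepsilon_i}\subseteq\breve P_{\bx}^{\varepsilon_{i+1}}$: when $\varepsilon_i=1$ this is $\breve P_{\bx}^{2}\subseteq\breve P_{\bx}^{1+}$, and when $\varepsilon_i<1$ it holds because the jumps of $\{\breve P_{\bx}^{r}\}$ in $(0,1)$ are exactly $\varepsilon_1<\dots<\varepsilon_s$, so the filtration is constant on $(\varepsilon_i,\varepsilon_{i+1}]$ (note $2\varepsilon_i>\varepsilon_i$).

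I expect the identification $H(\varepsilon_i)=$ image of $\breve P_{\bx}^{\varepsilon_i}$ to be the only genuine bookkeeping issue: one must track the ceilings $\lceil\varepsilon_i-\varepsilon_\alpha\rceil$, distinguish reductive from non-reductive roots, and use the elementary identities $\breve T^{\varepsilon_i}=\breve T^1$, $\breve T^{1+}=\breve T^{2}$, and $\breve P_{\bx}^{1+}\cap U_\alpha(\breve k)=\breve U_{\alpha,m_\alpha+1}$ for non-reductive $\alpha$. An alternative in the spirit of Lemma~\ref{lm:comm} is to argue directly in $\bG_2^1$: it is generated by $\bT_2^1$ and the $\bU_{\beta,2}^1$ ($\beta\in\Phi$) by Lemma~\ref{lm:GGr_generated_by_roots}(ii), and by Lemma~\ref{lm:comm}(b) the groups $\bT_2^1$ and $\bU_{\alpha,2}^1$ with $\alpha$ reductive are central in $\bG_2^1$; so normality and abelianness reduce to checking, for non-reductive $\alpha$ with $\varepsilon_\alpha\geq\varepsilon_i$ and arbitrary $\beta\in\Phi$, that $[\bU_{\beta,2}^1,\bU_{\alpha,2}^1]\subseteq H(\varepsilon_i)$ (and $\subseteq H(\varepsilon_{i+1})$ if also $\varepsilon_\beta\geq\varepsilon_i$), which follows by Lemma~\ref{lm:usual_commutator_relations} from Lemma~\ref{lm:epsilons_and_ms} and the fact that any $\gamma=p\beta+q\alpha\in\Phi$ ($p,q\geq 1$) whose image in $\bG_2$ is a nontrivial root group satisfies $\varepsilon_\gamma\geq\varepsilon_\alpha$ (with $\varepsilon_\gamma>\varepsilon_i$ when $\beta$ is also non-reductive) --- exactly as in the ``$\beta$ non-reductive'' cases of the proof of Lemma~\ref{lm:comm}, with the case $\beta=-\alpha$ covered by Lemma~\ref{lm:usual_commutator_relations}(iii), whose output lies in $\bT_2^1\subseteq H(1)$.
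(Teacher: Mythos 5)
Your proof of part (i) coincides with the paper's: it uses the same commutator bound $[\breve P_{\bx}^{(a-1)+},\breve P_{\bx}^{(a-1)+}]\subseteq\breve P_{\bx}^{(2a-2)+}\subseteq\breve P_{\bx}^{a+}$ and the observation that each $\bG_r^{a,i}/\bG_r^{a,i+1}$ is a subquotient of the abelian group $\bG_{a+1}^a$.

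For part (ii), however, you take a genuinely different route from the paper. The paper works directly with the affine root subgroups: it invokes Lemma~\ref{lm:GGr_generated_by_roots}(ii) to reduce to generators, then checks the required commutator inclusions one pair of roots at a time using Lemma~\ref{lm:usual_commutator_relations}(ii)--(iii) and the $\varepsilon$-bookkeeping of Lemma~\ref{lm:epsilons_and_ms} (this is also the ``alternative'' you sketch at the end). Your primary argument is instead to identify $H(\varepsilon_i)$ with the image of the Moy--Prasad group $\breve P_{\bx}^{\varepsilon_i}$ in $\breve P_{\bx}^{0+}/\breve P_{\bx}^{1+}$, which makes both claims automatic: normality of $\bG_r^{1,i}$ in $\bG_r^1$ follows because $\breve P_{\bx}^{\varepsilon_i}\trianglelefteq\breve P_{\bx}$, and abelianness of $\bG_r^{1,i}/\bG_r^{1,i+1}\cong\breve P_{\bx}^{\varepsilon_i}/\breve P_{\bx}^{\varepsilon_{i+1}}$ follows from the standard bound $[\breve P_{\bx}^{\varepsilon_i},\breve P_{\bx}^{\varepsilon_i}]\subseteq\breve P_{\bx}^{2\varepsilon_i}$ together with the observation that $\breve P_{\bx}^{2\varepsilon_i}\subseteq\breve P_{\bx}^{\varepsilon_{i+1}}$ (trivially if $2\varepsilon_i\geq\varepsilon_{i+1}$, and otherwise because the filtration has no jumps in $(\varepsilon_i,\varepsilon_{i+1})$ and the torus filtration jumps only at integers in the unramified case). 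I verified the identification: by \eqref{eq:intersection_of_P_and_root}, $\breve P_{\bx}^{\varepsilon_i}\cap U_\alpha(\breve k)=\breve U_{\alpha,m_\alpha+\lceil\varepsilon_i-\varepsilon_\alpha\rceil}$, and the ceiling is $1$ for $\alpha$ reductive, $0$ for $\alpha$ non-reductive with $\varepsilon_\alpha\geq\varepsilon_i$, and $1$ for $\alpha$ non-reductive with $\varepsilon_\alpha<\varepsilon_i$, which under Lemma~\ref{lm:structure_of_rootgroup_subquotients} gives precisely $\bU_{\alpha,2}^1$, $\bU_{\alpha,2}^1$, and the trivial group respectively --- exactly matching the definition of $H(\varepsilon_i)$. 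Your approach is more conceptual and shorter once the identification is granted, since normality and the commutator bound come for free from general Moy--Prasad theory, rather than requiring a case-by-case root computation; the trade-off is the bookkeeping needed to pin down the identification itself, and the paper's explicit root-group computations are reused in later lemmas such as Lemma~\ref{lm:commutator_pairings}, so they are not wasted effort in the paper's broader development.
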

\begin{proof}
It suffices to prove the assertions on $\overline \FF_q$-points. To show (i), notice that if $a \geq 2$, then $[\breve P_{\bf x}^{(a-1)+}, \breve P_{\bf x}^{(a-1)+}] \subseteq \breve P_{\bf x}^{2(a-1)+} \subseteq \breve P_{\bf x}^{a+}$, so it follows that $\breve G^a_{a+1} = \breve P_{\bf x}^{(a-1)+}/\breve P_{\bf x}^{a+}$ is abelian. To establish (ii), it is enough to show that (with $a = 1$)  for any $s + 1 \geq i \geq 1$, $H(\varepsilon_i)$ is normal in $\bG_2^1$ and that $H(\varepsilon_i)/H(\varepsilon_{i+1})$ is abelian. We spend the rest of the proof establishing these two claims. 
Recall that for $s+1 \geq i \geq 1$, $H(\varepsilon_i)$ is generated by $\bT_2^1$ and all $\bU_{\alpha,2}^{1}$ with $\alpha \in \bigsqcup_{j=i}^{s+1} \Phi_j$.

We start with $i = s+1$, i.e.\ the case $H(\varepsilon_{s+1}) = H(1)$. By Lemma \ref{lm:usual_commutator_relations}, $[\breve T_2^1,\breve U_{\alpha,2}^1] = 1$. Let $\alpha \in \Phi_{s+1}$ (thus $\alpha$ is reductive) and let $\beta \in \Phi$ be any non-reductive root. Then $[\breve U_{\alpha,2}^1, \breve U_{\beta,2}^1]$ is the image in $\breve G_2^1$ of 
\begin{equation}\label{eq:some_commut_comp_proof_abelian_quots}
[\breve U_{\alpha,m_{\alpha} + 1}, \breve U_{\beta, m_{\beta}}] \subseteq \prod_{\substack{p,q \in \bZ_{\geq 1} \\ p\alpha + q\beta \in \Phi}} \breve U_{p\alpha + q\beta, p(m_{\alpha} + 1) + qm_{\beta}}.
\end{equation}
Using Lemma \ref{lm:epsilons_and_ms} along with $p \geq 1$, we see that $p(m_{\alpha} + 1) + qm_{\beta} \geq m_{p\alpha + q\beta} + 1$. Thus the contribution of $p\alpha + q\beta$ to the commutator lies in $\breve U_{p\alpha + q\beta, m_{p\alpha + q\beta} + 1}$. From this we deduce $[\bU_{\alpha,2}^1, \bU_{\beta,2}^1] \subseteq H(1)$. Thus if $x \in \bU_{\beta,2}^1$ for any $\beta \in \Phi$, and $y \in \bU_{\alpha,2}^1$, then $xyx^{-1} = [x^{-1}, y^{-1}] y \in H(1)$, which shows that $H(1)$ is normal in $\bG_2^1$. A computation analogous to \eqref{eq:some_commut_comp_proof_abelian_quots} for $\alpha, \beta \in \Phi^+$ both reductive, shows immediately that $[\bU_{\alpha,2}^1, \bU_{\beta,2}^1] = 1$ and $[\bT_2^1,\bU_{\alpha,2}^1] = 1$, so $H(1)$ is abelian.

Next, pick some $s \geq i \geq 1$. We show that $H(\varepsilon_i)$ is normal in $\bG_2^1$. Since we have already established that $H(\varepsilon_{s+1})$ is normal in $\bG_2^1$, it suffices to check as above that for all (non-reductive) $\alpha \in \Phi$ with $\varepsilon_\alpha \geq \varepsilon_i$ and all non-reductive $\beta \in \Phi$, we have $[\bU_{\alpha,2}^1, \bU_{\beta,2}^1] \subseteq H(\varepsilon_i)$. Now, $[\breve U_{\alpha,2}^1, \breve U_{\beta,2}^1]$ is the image in $\breve G_2^1$ of 
\[
[\breve U_{\alpha, m_{\alpha}}, \breve U_{\beta, m_{\beta}}] \subseteq \prod_{\substack{p,q \in \bZ_{\geq 1} \\ p\alpha + q\beta \in \Phi}} \breve U_{p\alpha + q\beta, pm_{\alpha} + qm_{\beta}}.
\]
Now, if $\varepsilon_{p\alpha + q\beta} \geq \varepsilon_i$, then the contribution of $p\alpha + q\beta$ to the commutator is contained in $\bU_{p\alpha + q\beta,2}^1 \subseteq H(\varepsilon_i)$. If $p\alpha + q\beta$ is reductive, the same computation as in \eqref{eq:residue_is_at_least_one} shows that $\breve U_{p\alpha + q\beta, pm_{\alpha} + qm_{\beta}} \subseteq \breve U_{p\alpha + q\beta, m_{p\alpha + q\beta}} \tar \breve U_{p\alpha + q\beta,2}^1$. It remains to handle the case that $p\alpha + q\beta$ is non-reductive with $\varepsilon_{p\alpha + q\beta} < \varepsilon_i$. If $p\varepsilon_\alpha + q\varepsilon_\beta < 1$, then by Lemma \ref{lm:epsilons_and_ms}, $p\varepsilon_\alpha + q\varepsilon_\beta - \varepsilon_{p\alpha + q\beta} = \lfloor p\varepsilon_\alpha + q\varepsilon_\beta \rfloor = 0$, i.e.\ $\varepsilon_i > \varepsilon_{p\alpha + q\beta} = p\varepsilon_\alpha + q\varepsilon_\beta \geq p\varepsilon_i$, which is a contradiction. Thus we must have $p\varepsilon_\alpha + q\varepsilon_\beta \geq 1$, whence $pm_{\alpha} + qm_{\beta} - m_{p\alpha + q\beta} = \lfloor p\varepsilon_\alpha + q\varepsilon_\beta \rfloor  \geq 1$. Thus $\breve U_{p\alpha + q\beta, pm_{\alpha} + qm_{\beta}} \subseteq \breve U_{p\alpha + q\beta, m_{p\alpha + q\beta} + 1}$, whose image in $\breve G_2^1$ vanishes. We may finally conclude that $[\bU_{\alpha,2}^1, \bU_{\beta,2}^1] \subseteq H(\varepsilon_i)$, which finishes the proof of normality of $H(\varepsilon_i)$ in $\bG_2^1$.

For $\alpha$ and $\beta$ non-reductive with $\varepsilon_{\alpha} = \varepsilon_{\beta} = \varepsilon_i$, a similar computation shows that $[\bU_{\alpha,2}^1, \bU_{\beta,2}^1] \subseteq H(\varepsilon_{i+1})$. Thus $H(\varepsilon_i)/H(\varepsilon_{i+1})$ is abelian. \qedhere
\end{proof}

\subsection{Pairings induced by the commutator}\label{sec:preparations_coh_Sigmaprime}

Let $N,N^-$ be the unipotent radicals of any two opposite Borel subgroups of $G$ which contain $T$ and are defined over $\breve k$. (We will specify $N$ to suit our needs in Section \ref{sec:hatSigmaprime}.) For $r-1 \geq a \geq 1$, let $\bN_r$, $\bN_r^-$ and $\bN_r^a$, $\bN_r^{-,a}$ be the corresponding subgroups of $\bG_r$ and $\bG_r^a$. Let $\Phi^+ = \{ \alpha \in \Phi \colon \bU_{\alpha,r} \subseteq \bN_r \}$ and $\Phi^- = \Phi \sm \Phi^+ = \{ \alpha \in \Phi \colon \bU_{\alpha,r} \subseteq \bN_r^- \}$.  For $s+1 \geq i \geq 1$, set $\Phi_i^+ = \Phi_i \cap \Phi^+$ and $\Phi_i^- = \Phi_i \cap \Phi^-$, and let $\bN_r^{1,i} = \bG_r^{1,i} \cap \bN_r$. We study some pairings induced by the commutator map. Note that the targets of the maps in Lemma \ref{lm:commutator_pairings} are abelian by Lemma \ref{lm:g_filtration_abelian}.

\begin{lm}\label{lm:commutator_pairings} Let $r\geq 2$ and $r-1 \geq a \geq 1$. Let $\alpha \in \Phi$ be a non-reductive root.
\begin{itemize}
\item[(i)] Let $a \geq 2$. The commutator map induces a bilinear pairing of abelian groups,
\[
\bU_{\alpha,r}^{r-a}/ \bU_{\alpha,r}^{r-a+1} \times \bN_r^a/\bN_r^{a+1} \rightarrow \bG_r^{r-1}, \quad (\bar{\xi}, \bar{x}) \mapsto [\bar{\xi},\bar{x}].
\]
\item[(ii)] Let $a = 1$ and $s + 1 \geq i \geq 1$. Assume that $\varepsilon_{-\alpha} = \varepsilon_i$ (thus $\varepsilon_\alpha = 1 - \varepsilon_i$). We have $[\bU_{\alpha,r}^{r-1}, \bN_r^{1,i}] \subseteq \bG_r^{r-1, {s+1}}$ and $[\bU_{\alpha,r}^{r-1}, \bN_r^{1,i+1}] = 1$. The commutator map induces a bilinear pairing of abelian groups,
\[
\bU_{\alpha,r}^{r-1} \times \bN_r^{1,i}/\bN_r^{1,i+1} \rightarrow \bG_r^{r-1,{\rm s+1}}, \quad (\xi, \bar{x}) \mapsto [\bar{\xi},\bar{x}]. 
\]

\end{itemize}
\end{lm}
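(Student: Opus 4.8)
The plan is to reduce every assertion to $\overline{\FF}_q$-points (all the groups in sight are smooth) and translate it into a commutator computation inside $\breve P_{\bx}$, controlled by Lemma~\ref{lm:usual_commutator_relations} together with the arithmetic of the numbers $m_\beta,\varepsilon_\beta$ from Lemma~\ref{lm:epsilons_and_ms}. The argument runs closely parallel to the proofs of Lemma~\ref{lm:comm} and of Lemma~\ref{lm:g_filtration_abelian}(ii), and I would organize it into three parts: the containment assertions, well-definedness of the induced maps on the quotients, and bilinearity.

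\emph{Containments.} By Lemma~\ref{lm:GGr_generated_by_roots} — and, for $\bN_r^{1,i}$, by the description coming out of Section~\ref{sec:filtration_general}, namely that $\bN_r^{1,i}$ is generated by $\bN_r\cap\bG_r^2$ together with the root groups $\bU_{\gamma,r}^1$ for $\gamma\in\Phi^+$ reductive or with $\varepsilon_\gamma\ge\varepsilon_i$ — it is enough to commute $\bU_{\alpha,r}^{r-a}$, resp.\ $\bU_{\alpha,r}^{r-1}$, against each generator. Against the torus part one applies Lemma~\ref{lm:usual_commutator_relations}(i), which places the commutator into $\breve U_{\alpha,m_\alpha+r}$, i.e.\ beyond level $r-1$, hence trivial in $\bG_r$. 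Against $\bU_{-\alpha,r}^{\bullet}$ one applies Lemma~\ref{lm:usual_commutator_relations}(iii), which places the commutator into the torus filtration; since $m_\alpha+m_{-\alpha}=1$ it lands in $\breve T^{\alpha,r-1}/\breve T^{\alpha,r}\subseteq\cT$, which in (i) is inside $\bG_r^{r-1}$ and in (ii) is inside $\bG_r^{r-1,s+1}$ (here one uses $\varepsilon_{-\alpha}=\varepsilon_i$, so that $-\alpha$ is one of the roots permitted in $\bN_r^{1,i}$). Against $\bU_{\beta,r}^{\bullet}$ with $\beta\ne\pm\alpha$ one applies Lemma~\ref{lm:usual_commutator_relations}(ii) and bounds the filtration level of each term $\breve U_{p\alpha+q\beta,\bullet}$ by Lemma~\ref{lm:epsilons_and_ms}, exactly as in the corresponding cases of Lemma~\ref{lm:comm}: non-reductive terms either die in $\bG_r$ or land in $\bG_r^{r-1}$, while reductive terms are forced into $\cT$ and the reductive level-$(r-1)$ root groups, i.e.\ into $\bG_r^{r-1,s+1}$. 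In (ii) the constraint $\varepsilon_\alpha=1-\varepsilon_i$ is precisely what guarantees $p\varepsilon_\alpha+q\varepsilon_\beta\ge 1$ whenever $\varepsilon_\beta\ge\varepsilon_i$ and $p\alpha+q\beta$ is non-reductive, so that such terms are killed and the image is pinned down to $\bG_r^{r-1,s+1}$. The vanishing statements $[\bU_{\alpha,r}^{r-a},\bN_r^{a+1}]=1$, $[\bU_{\alpha,r}^{r-a+1},\bN_r^a]=1$ in (i), and $[\bU_{\alpha,r}^{r-1},\bN_r^{1,i+1}]=1$ in (ii), come out the same way; the first two are in fact immediate from Lemma~\ref{lm:comm}(a) applied with parameters $a$ and $a-1$ (using $\bN_r^{a+1}\subseteq\bG_r^{a+1}$ and $\bN_r^a\subseteq\bG_r^a$).

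\emph{Well-definedness and bilinearity.} Well-definedness of the induced map on $\bU_{\alpha,r}^{r-a}/\bU_{\alpha,r}^{r-a+1}\times\bN_r^a/\bN_r^{a+1}$, resp.\ of $\bar x\mapsto[\xi,\bar x]$ on $\bN_r^{1,i}/\bN_r^{1,i+1}$, is exactly the vanishing just recorded. For bilinearity, combine the identities $[\xi_1\xi_2,x]=[\xi_1,x]^{\xi_2}\,[\xi_2,x]$ and $[\xi,x_1x_2]=[\xi,x_2]\,[\xi,x_1]^{x_2}$ (with $h^g=g^{-1}hg$) with the fact that the target ($\bG_r^{r-1}$, resp.\ $\bG_r^{r-1,s+1}$) is abelian by Lemma~\ref{lm:g_filtration_abelian} and already contains every $[\xi,x]$. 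What remains is to see that conjugation by an element of $\bU_{\alpha,r}^{r-a}$, resp.\ of $\bN_r^a$ (and the analogues in (ii)), acts trivially on the image of the commutator map, i.e.\ that the triple commutators $[[\bU_{\alpha,r}^{r-a},\bN_r^a],\bU_{\alpha,r}^{r-a}]$ and $[[\bU_{\alpha,r}^{r-a},\bN_r^a],\bN_r^a]$ are trivial in $\bG_r$. One checks this by reducing again to root generators and iterating the estimates of Lemmas~\ref{lm:usual_commutator_relations} and~\ref{lm:epsilons_and_ms}: the key gain is that $\alpha$ non-reductive forces $\varepsilon_\alpha>0$, so a second commutator with $\bU_{\alpha,r}^{r-a}$ strictly raises the Moy--Prasad level of a term already lying beyond level $r-2$, and the $\beta=-\alpha$ contribution is disposed of directly since $[\bU_{\alpha,r}^{r-a},\cT]=1$ by Lemma~\ref{lm:usual_commutator_relations}(i).

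\emph{Main obstacle.} The delicate step is this last bookkeeping of Moy--Prasad levels in the presence of non-reductive roots, where the filtration indices are shifted by the fractional parts $\varepsilon_\bullet$ and $\bG_r^1$ itself fails to be abelian — the very phenomenon that forced the refinement $\{\bG_r^{a,i}\}$ of Section~\ref{sec:filtration_general} and that makes it necessary to split off part~(ii), with its hypothesis $\varepsilon_{-\alpha}=\varepsilon_i$, from part~(i). I expect the borderline levels — $a=r-1$ in (i), and the interaction of the jumps $\varepsilon_i<\varepsilon_{s+1}=1$ in (ii) — to require the most care, and this is exactly where one leans on Lemma~\ref{lm:comm} and on the structure already worked out in Lemma~\ref{lm:g_filtration_abelian}.
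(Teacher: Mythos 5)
Your outline follows the paper's proof essentially step-for-step: reduce to $\overline{\FF}_q$-points, control each root commutator via Lemmas~\ref{lm:usual_commutator_relations} and~\ref{lm:epsilons_and_ms} (where the paper, for part (i), simply cites Lemma~\ref{lm:comm} three times rather than redoing the computation), and deduce bilinearity from abelianness of the target together with vanishing of the relevant triple commutators. The one place your sketch is thin is the triple commutator $[[\bU_{\alpha,r}^{r-1},\bN_r^{1,i}],\bN_r^{1,i}]$ in part (ii): the ``$\varepsilon_\alpha>0$ raises the level'' argument only disposes of the commutator with $\bU_{\alpha,r}^{r-1}$, whereas the key fact here — which the paper makes explicit — is that $[\bU_{\alpha,r}^{r-1},\bN_r^{1,i}]\subseteq\bG_r^{r-1,s+1}$ is generated by $\bT_r^{r-1}$ and by $\bU_{\gamma,r}^{r-1}$ for $\gamma$ \emph{reductive}, all of which are centralized by $\bN_r^1$ (via Lemma~\ref{lm:comm}(b) and Lemma~\ref{lm:usual_commutator_relations}(i)).
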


\begin{proof}
(i): By Lemma \ref{lm:comm} applied three times, the commutator map $\bU_{\alpha,r}^{r-a} \times \bN_r^a\rightarrow \bG_r$ induces the claimed pairing. It is linear in $\bar{x}$: if $x_1,x_2 \in \breve N_r^a$, then
\begin{align*}
[\xi,x_1x_2] = \xi^{-1} x_2^{-1}x_1^{-1}\xi x_1 x_2 = \xi^{-1} x_1^{-1} x_2^{-1} \xi x_2 x_1 = \xi^{-1} x_1^{-1} \xi [\xi,x_2] x_1 = [\xi,x_1] [\xi,x_2],
\end{align*}
where the second equality follows from Lemma \ref{lm:comm} and $\bN_r^a/\bN_r^{a+1}$ being abelian, and the fourth follows from Lemma \ref{lm:comm} as $[\xi,x_2] \in \breve G_r^{r-1}$, the assumption $a \geq 2$, and the subsequent fact that $\bN_{a+1}^a$ is generated by root subgroups contained in it. The linearity in $\bar{\xi}$ is shown similarly.

(ii): 
We work on $\overline \FF_q$-points. To show the first claim, we observe that $\bU_{\alpha,r}^{r-1}$ commutes with $\bN_r^2$ by Lemma \ref{lm:comm}. As $\bN_r^{1,i+1}$ is generated by $\bN_r^2$ along with $\bU_{\beta,r}^{1}$ for all $\beta$ which are either reductive or satisfy $\varepsilon_{\beta} \geq \varepsilon_i$, we have to show that $[\bU_{\alpha,r}^{r-1}, \bU_{\beta,r}^1] \subseteq \bG_r^{r-1,{s+1}}$ for all such $\beta$. We have two cases:

\medskip
\noindent\emph{Case: $\beta$ is non-reductive.} We have to show that $[\breve U_{\alpha,m_{\alpha} + r-2}, \breve U_{\beta,m_{\beta}}]$ maps to $\breve G_r^{r-1,s+1}$ inside $\breve G_r$. Using Lemma \ref{lm:usual_commutator_relations}(ii), it is enough to show that for all $p,q \in \bZ_{\geq 1}$ such that $p\alpha + q\beta \in \Phi$, $\breve U_{p\alpha + q\beta, p(m_{\alpha} + r - 2) + qm_{\beta}}$ maps to $1$ in $\breve G_r$ if $p\alpha + q\beta$ is non-reductive and maps to $\breve U_{p\alpha + q\beta,r}^{r-1}$ if $p\alpha + q\beta$ is reductive. In both cases, this amounts to the claim that
\[p(m_\alpha + r - 2) + qm_{\beta} \geq m_{p\alpha + q\beta} +r - 1, \]
which in turn by Lemma \ref{lm:epsilons_and_ms} is equivalent to 
\[ \lfloor p\varepsilon_\alpha + q\varepsilon_\beta \rfloor  + (p-1)(r-2) \geq 1, \]
which is true as $\varepsilon_\beta \geq  \varepsilon_i = \varepsilon_{-\alpha} = 1-\varepsilon_{\alpha}$. 

\medskip
\noindent\emph{Case: $\beta$ is reductive.} This case is shown similarly (in fact, slightly simplier) to the above, and we omit the details. This finishes the proof of the first claim, i.e., $[\bU_{\alpha,r}^{r-1}, \bN_r^{1,i}] \subseteq \bG_r^{r-1, s+1}$. 

\mbox{}

We now show the second claim, i.e., $[\bU_{\alpha,r}^{r-1}, \bN_r^{1,i+1}] = 1$. Proceeding analogously as in the proof of the first claim, we need only to show that for all $\beta \in \Phi$ either reductive or satisfying $\varepsilon_\beta \geq \varepsilon_{i+1}$, one has $[\bU_{\alpha,r}^{r-1},\bU_{\beta,r}^1] = 1$. We again have two cases:


\medskip
\noindent\emph{Case: $\beta$ is non-reductive.} We have to show that $[\breve U_{\alpha,m_{\alpha} + r-2}, \breve U_{\beta,m_{\beta}}]$ maps to $1$ in $\breve G_r$. Using Lemma \ref{lm:usual_commutator_relations}(ii), it is enough to show that for all $p,q \in \bZ_{\geq 1}$ such that $p\alpha + q\beta \in \Phi$, $\breve U_{p\alpha + q\beta, p(m_{\alpha} + r - 2) + qm_{\beta}}$ maps to $1$ in $\breve G_r$. If $p\alpha + q\beta$ is non-reductive, this follows from the similar statement in the proof of the first claim, as $\varepsilon_{i+1} \geq \varepsilon_i$. If $p\alpha + q\beta$ is reductive, it amounts to claim that
\[p(m_\alpha + r - 2) + qm_{\beta} \geq m_{p\alpha + q\beta} +r, \] 
which by Lemma \ref{lm:epsilons_and_ms} is equivalent to 
\[ \lfloor p\varepsilon_\alpha + q\varepsilon_\beta \rfloor  + (p-1)(r-2) \geq 2, \]
But this is true, as $\lfloor p\varepsilon_\alpha + q\varepsilon_\beta \rfloor \geq 2$. Indeed, as $p\alpha + q\beta$ is reductive, $\varepsilon_{p\alpha+ q\beta} = 0$. Hence by Lemma \ref{lm:epsilons_and_ms} $\lfloor p\varepsilon_\alpha + q\varepsilon_\beta \rfloor = p\varepsilon_\alpha + q\varepsilon_\beta \geq \varepsilon_\alpha + \varepsilon_\beta > 1$. Being an integer, $\lfloor p\varepsilon_\alpha + q\varepsilon_\beta \rfloor$ must be $\geq 2$.

\medskip
\noindent \emph{Case: $\beta$ is reductive.} This case is shown similarly (in fact, slightly simpler) to the above, and we omit the details. This finishes the proof of the second claim. 

\mbox{}

We are now ready to show that the claimed pairing is well-defined. Indeed, let $\xi \in \breve U_{\alpha,r}^{r-1}$ and let $x,x' \in \breve N_r^{1,i}$ with the same image $\bar x = \bar x' \in \breve N_r^{1,i}/\breve N_r^{1,i+1}$. Then there is an $y \in \breve N_r^{1,i+1}$ such that $x' =xy$. We compute:
\begin{align*}
[\xi,x'] = [\xi,xy] = \xi^{-1}y^{-1}x^{-1} \xi xy = y^{-1} [\xi,x] y = [\xi,x],
\end{align*}
where for the third equality we use that $[\bU_{\alpha,r}^{r-1}, \bN_r^{1,i+1}] = 1$ and for the last we use that $[\xi,x] \in \breve G_r^{r-1,s+1}$ and $[\bG_r^{r-1,s+1}, \bN_r^1] = 1$ (indeed, for any reductive root $\gamma$ we have $[\bU_{\gamma,r}^{r-1}, \bN_r^1] = 1$ by Lemma \ref{lm:comm}). Now we show that this pairing is linear in the second variable. Therefore, let $\xi \in \breve U_{\alpha,r}^{r-1}$ and $x_1,x_2 \in \breve N_r^{1,i}$. We compute:
\begin{align*}
[\xi,x_1x_2] &= \xi^{-1}x_2^{-1}x_1^{-1} \xi x_1x_2  = \xi^{-1}[x_2,x_1] x_1^{-1}x_2^{-1} \xi x_1 x_2 \\ 
&= [x_2,x_1]\xi^{-1} x_1^{-1}x_2^{-1}\xi x_1x_2 = [x_2,x_1] \xi^{-1} x_1^{-1} x_2^{-1} \xi x_2 x_1 [x_1,x_2] \\ 
&= [x_2,x_1]\xi^{-1}x_1^{-1}\xi [\xi,x_2] x_1 [x_1,x_2] = [x_2,x_1] [\xi,x_1][\xi,x_2][x_1,x_2] \\
&= [\xi,x_1][\xi,x_2].
\end{align*}
The third equality follows as $[x_2,x_1] \in \breve N_r^{1,i+1}$ (as $\bN_r^{1,i}/\bN_r^{1,i+1}$ is abelian) and as $[\bU_{\alpha,r}^{r-1}, \bN_r^{1,i+1}] = 1$. The sixth equality follows as $[\xi,x_2] \in \breve G_r^{r-1,s+1}$ commutes with $x_1 \in \breve N_r^1$. The last equality follows as $[\xi,x_1], [\xi,x_2] \in \breve G_r^{r-1, {\rm s+1}}$ commute with $[x_1,x_2] \in \breve N_r^1$, and as $[x_2,x_1][x_1,x_2] = 1$. An analogous (slightly simplier) computation shows the linearity in the first variable.
\end{proof}

\begin{rem} Lemma \ref{lm:usual_commutator_relations}(ii) can certainly be generalized. As we will not use the following generalization, we state it without proof. As for any root $\alpha \in \Phi$, $-\alpha$ is a root too, and $\varepsilon_{-\alpha} = 1-\varepsilon_{\alpha}$, we have a symmetry between the jumps $\varepsilon_i$. Concretely, we have $\varepsilon_i = 1 - \varepsilon_{s+1-i}$ for $1\leq i \leq s$. For each $1\leq a \leq r-1$, let $\bG_r^{a,i}$ be the subgroup of $\bG_r^a$ generated by $\bG_r^{a+1}$, $\bT_r^a$, $\bU_{\alpha,r}^a$ ($\alpha$ reductive or $\varepsilon_{\alpha} \geq \varepsilon_i$). Then Lemma \ref{lm:usual_commutator_relations} extends to the following general duality statement: Fix $1\leq a\leq r-1$ and  $1\leq i \leq s$. Then the commutator induces a bilinear pairing,
\[
\bG_r^{r-a, s+1-i}/\bG_r^{r-a,s+2-i} \times \bG_r^{a,i}/\bG_r^{a,i+1} \rar \bG_r^{r-1,s+1}. \tag*{$\Diamond$}
\]
\end{rem}

\subsection{Stratification on (subgroups of) $\bN_r^1$}\label{sec:stratification_on_Kr1}


Recall that for any subgroup $H \subset G$ and associated subgroups $\bH_r \subset \bG_r$, we have the notation $\bH_r^{a,*} = \bH_r^a \smallsetminus \bH_r^{a+1}$ (open subscheme) and hence the corresponding set $\breve H_r^{a,*}$ of $\overline{\FF}_q$-valued points.

\begin{lemma}\label{lm:Az}
Let $r \geq 2$ and let $r-1 \geq a \geq 1$. For $z \in \breve N_r^{a,*}$, write $z = \prod_{\beta \in \Phi^+} x_\beta^z$ with $x_\beta^z \in \breve U_{\beta,r}^a$ for a fixed (but arbitrary) order on $\Phi^+$. For $\beta \in \Phi^+$, let $a \leq a(\beta, z) \leq r$ be the integer such that $x_\beta^z \in \breve U_{\beta, r}^{a(\beta,z),*}$.
\begin{enumerate}[label=(\roman*)]
\item
If $a \geq 2$, then the set
\begin{equation*}
A_z \colonequals \{\beta \in \Phi^+ : a(\beta, z) = a\}
\end{equation*}
is non-empty and independent of the chosen order on $\Phi^+$.

\item
Let $a=1$ 
and let $s+1 \geq i \geq 1$ be such that $z \in \breve N_r^{1,i,\ast}$. Then the set
\begin{equation*}
A_z \colonequals \{\beta \in \Phi^+_i \colon a(\beta,z) = 1 \}
\end{equation*}
is non-empty and independent of the chosen order on $\Phi^+$. Moreover, $a(\beta, z) > 1$ for all $\beta \in \bigcup_{j=1}^{i-1} \Phi_j^+$.
\end{enumerate}
\end{lemma}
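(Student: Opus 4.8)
The plan is to unpack the definitions of the relevant subgroups using Lemma \ref{lm:structure_of_rootgroup_subquotients} so that everything becomes a statement about valuations of affine root group elements, then verify non-emptiness and order-independence separately. First I would record the basic structural facts: by Lemma \ref{lm:GGr_generated_by_roots} (and its variants), $\bN_r^a$ is generated by the $\bU_{\beta,r}^a$ with $\beta \in \Phi^+$ together with $\bN_r^{a+1}$, and by Lemma \ref{lm:g_filtration_abelian}, the successive quotients $\bN_r^a/\bN_r^{a+1}$ (for $a\geq 2$) and $\bN_r^{1,i}/\bN_r^{1,i+1}$ (for $a=1$) are abelian. The key consequence is that in these abelian quotients the product $z = \prod_{\beta} x_\beta^z$ is independent of the chosen order \emph{modulo the next piece of the filtration}, so the image of $z$ in $\bN_r^a/\bN_r^{a+1}$ (resp.\ $\bN_r^{1,i}/\bN_r^{1,i+1}$) depends only on $z$ and equals the sum of the images of those $x_\beta^z$ that actually lie in $\bU_{\beta,r}^{a,*}$ (resp.\ contribute to the $i$-th graded piece). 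One then needs that the graded piece $\bN_r^a/\bN_r^{a+1}$ decomposes as a direct sum $\bigoplus_{\beta\in\Phi^+} \bU_{\beta,r}^a/\bU_{\beta,r}^{a+1}$ (in case (i)), and analogously $\bN_r^{1,i}/\bN_r^{1,i+1} = \bigoplus_{\beta\in\Phi_i^+} \bU_{\beta,r}^1/\bU_{\beta,r}^2$ in case (ii); this directness follows from the root group decomposition of $\bN_r$ together with the explicit descriptions in Lemma \ref{lm:structure_of_rootgroup_subquotients}, since distinct root groups intersect trivially. Granting directness, the image of $z$ in the graded piece is nonzero exactly because $z \notin \bN_r^{a+1}$ (resp.\ $z\in \bN_r^{1,i,\ast}$ means $z\notin \bN_r^{1,i+1}$), and under the direct-sum decomposition this nonzero image has a well-defined (order-independent) support, which is precisely $A_z$. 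Hence $A_z$ is non-empty and independent of the order.

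In more detail, for (i) I would argue as follows. The integer $a(\beta,z)$ is defined by $x_\beta^z \in \breve U_{\beta,r}^{a(\beta,z),\ast}$, so $\beta\in A_z$ iff the image of $x_\beta^z$ in $\bU_{\beta,r}^a/\bU_{\beta,r}^{a+1}$ is nonzero. Since $z\in\breve N_r^{a,\ast}$, its image $\bar z$ in $\bN_r^a/\bN_r^{a+1}$ is nonzero. Reordering the factors $x_\beta^z$ changes the product only by elements of $\bN_r^{a+1}$ (because all $x_\beta^z \in \bN_r^a$ and the quotient is abelian, so commutators of the $x_\beta^z$ land in $\bN_r^{a+1}$ — here one uses Lemma \ref{lm:comm} type bounds), so $\bar z = \sum_{\beta\in\Phi^+} \overline{x_\beta^z}$ in $\bN_r^a/\bN_r^{a+1} = \bigoplus_\beta \bU_{\beta,r}^a/\bU_{\beta,r}^{a+1}$, independently of the order. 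Thus the support of $\bar z$ is $A_z$, which is therefore order-independent, and it is non-empty because $\bar z\neq 0$. The argument for (ii) with $a=1$ is the same, using the filtration $\{\bG_r^{1,i}\}$ and $\bN_r^{1,i}=\bG_r^{1,i}\cap\bN_r$ in place of $\{\bG_r^a\}$: the graded piece $\bN_r^{1,i}/\bN_r^{1,i+1}$ is the direct sum of $\bU_{\beta,r}^1/\bU_{\beta,r}^2$ over $\beta\in\Phi_i^+$ (together with a possible torus contribution when $i=s+1$, which does not affect the root-support statement), and $z\in\breve N_r^{1,i,\ast}$ gives a nonzero image whose support is $A_z\subseteq\Phi_i^+$. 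The final sentence of (ii), that $a(\beta,z)>1$ for $\beta\in\bigcup_{j<i}\Phi_j^+$, follows because $z\in\bN_r^{1,i}$ means that in the decomposition of $z$ the $\beta$-component for $\beta\in\Phi_j^+$ with $j<i$ (i.e.\ a root with $\varepsilon_\beta=\varepsilon_j<\varepsilon_i$) must already lie in $\bU_{\beta,r}^2$ — indeed $\bN_r^{1,i}$ is generated by $\bN_r^2$ together with $\bU_{\beta,r}^1$ for $\beta$ reductive or with $\varepsilon_\beta\geq\varepsilon_i$, so the "low-$\varepsilon$" non-reductive root directions can only enter through $\bN_r^2$.

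The step I expect to be the main obstacle is making the "direct sum decomposition of the graded pieces" and the "order-independence of the product modulo the next filtration step" fully rigorous in the non-reductive setting, i.e.\ genuinely pinning down $\bN_r^{1,i}/\bN_r^{1,i+1} = \bigoplus_{\beta\in\Phi_i^+}\bU_{\beta,r}^1/\bU_{\beta,r}^2$. One has to check that: (a) commutators among the $x_\beta^z$, and between them and any $\bN_r^2$-part, land in $\bN_r^{1,i+1}$ — which is exactly the content encoded in Lemmas \ref{lm:comm} and \ref{lm:g_filtration_abelian} and the $\varepsilon$-bookkeeping of Lemma \ref{lm:epsilons_and_ms} (the subtle point being that a sum $p\alpha+q\beta$ of two roots with $\varepsilon_\alpha,\varepsilon_\beta\geq\varepsilon_i$ can have $\varepsilon_{p\alpha+q\beta}<\varepsilon_i$, so one must check those contributions either vanish in $\bG_r$ or already lie in $H(\varepsilon_i)$, precisely as in the proof of Lemma \ref{lm:g_filtration_abelian}); and (b) the natural map from $\bigoplus_{\beta\in\Phi_i^+}\bU_{\beta,r}^1/\bU_{\beta,r}^2$ into $\bN_r^{1,i}/\bN_r^{1,i+1}$ is injective, which one gets by comparing with the decomposition of the ambient $\bN_r$ into root subgroups via Lemma \ref{lm:structure_of_rootgroup_subquotients}, since for each $\beta$ the subquotient $\bU_{\beta,r}^1/\bU_{\beta,r}^2$ is a single graded piece of the root filtration and distinct root subgroups meet trivially inside $\bG_r$. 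Once these bookkeeping points are in place, the non-emptiness and order-independence of $A_z$ are immediate from $\bar z\neq 0$ and the uniqueness of the coordinate expansion in a direct sum.
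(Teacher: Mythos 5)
Your proposal is correct and follows essentially the same route as the paper: the paper also identifies $\bN_r^a/\bN_r^{a+1}$ (resp.\ $\bN_r^{1,i}/\bN_r^{1,i+1}$) with a direct product of the root subquotients via Lemma \ref{lm:g_filtration_abelian}, reads off $A_z$ as the support of the image of $z$ in that abelian graded piece, and deduces non-emptiness from $z$ being in the open stratum. The only cosmetic difference is that for the last claim of (ii) the paper argues by contradiction (take the smallest $i_0<i$ with $a(\beta,z)=1$ for some $\beta\in\Phi^+_{i_0}$ and derive $z\in\breve N_r^{1,i_0,\ast}$), whereas you phrase the same computation directly; both reduce to the same direct-sum bookkeeping.
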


\begin{proof}
(i): As $a\geq 2$, the quotient $\bN_r^a/\bN_r^{a+1}$ is abelian by Lemma \ref{lm:g_filtration_abelian}. Thus its $\overline{\FF}_q$-points are simply tuples $(\bar x_{\beta})_{\beta \in \Phi^+}$ with $\bar x_{\beta} \in \breve U_{\beta,a+1}^a$ with entry-wise multiplication. If $\bar{z}= (\bar{x}_{\beta}^z)$ is the image of $z$ in this quotient, then $A_z$ identifies with the set of those $\beta$ for which $\bar{x}_{\beta}^z \neq 1$ (which is obviously independent of the order). 

(ii): Assume that the last claim of (ii) is not true. Then let $1\leq i_0 < i$ be the smallest integer such that $a(\beta,z) = 1$ for some $\beta \in \Phi_{i_0}^+$. Then from Lemma \ref{lm:g_filtration_abelian} it follows that $z \in \breve N_r^{1,i_0,\ast}$, which contradicts the assumption. This shows the last claim. 
The first claim follows by the same argument as in (i). \qedhere
\end{proof}

Using Section \ref{sec:preparations_coh_Sigmaprime} we can now prove the following generalization of \cite[Lemma 1.7]{Lusztig_04}. 

\begin{definition}
For $\alpha \in \Phi^+$ define its \textit{height} $\height(\alpha)$ (relative to $N$) to be the largest integer $m\geq 1$ such that $\alpha = \sum_{i=1}^m \alpha_i$ with $\alpha_i \in \Phi^+$.  
\end{definition}

\begin{prop}\label{prop:commutator_producing_torus_element}
Let $r\geq 2$ and let $r - 1 \geq a \geq 1$. Let $z = \prod_{\beta \in \Phi^+} x_\beta^z \in \breve N_r^{a,\ast}$ for $x_\beta^z \in \breve U_{\beta,r}^a$ and let $A_z$ be as in Lemma \ref{lm:Az}.
\begin{enumerate}[label=(\roman*)]
\item If $A_z$ contains a non-reductive root, let $-\alpha \in A_z$ be a non-reductive root of maximal height and $\alpha \in \Phi^-$ its opposite. Then for any $\xi \in \bU_{\alpha,r}^{r-a}$, we have $[\xi,z] \in \mathcal{T}^{\alpha}\bN_r^{-,r-1}$. Moreover, projecting $[\xi,z]$ into $\mathcal{T}^{\alpha}$ induces an isomorphism
\[
\lambda_z \colon \bU_{\alpha,r}^{r-a}/\bU_{\alpha,r}^{r-a+1} \stackrel{\sim}{\rightarrow} \mathcal{T}^{\alpha}
\]
\item If $A_z$ contains only reductive roots, let $-\alpha \in A_z$ be a root of maximal height and $\alpha \in \Phi^-$ its opposite. Then for any $\xi \in \bU_{\alpha,r}^{r-a-1}$, we have $[\xi,z] \in \mathcal{T}^{\alpha}\bN_r^{-,r-1}$. Moreover, projecting $[\xi,z]$ into $\mathcal{T}^{\alpha}$ induces an isomorphism
\[
\lambda_z \colon \bU_{\alpha,r}^{r-a-1}/\bU_{\alpha,r}^{r-a} \stackrel{\sim}{\rightarrow} \mathcal{T}^{\alpha}
\]
\end{enumerate} 
\end{prop}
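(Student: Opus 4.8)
The plan is to reduce everything to a computation inside a rank-one subgroup, namely the copy of $\SL_2$ attached to the root $\alpha$, exactly as in the proofs of Lemmas \ref{lm:usual_commutator_relations} and \ref{lm:comm}. First I would treat case (i). The key point is that $-\alpha \in A_z$ means the $(-\alpha)$-component $x_{-\alpha}^z$ of $z$ lies in $\breve U_{-\alpha,r}^{a,\ast}$, i.e. it has the smallest possible ``depth'' $a$; via Lemma \ref{lm:structure_of_rootgroup_subquotients} this means $x_{-\alpha}^z$ corresponds to an element of $\breve U_{-\alpha, m_{-\alpha}+a-1} \smallsetminus \breve U_{-\alpha, m_{-\alpha}+a}$. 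For $\xi \in \bU_{\alpha,r}^{r-a}$, corresponding (again by Lemma \ref{lm:structure_of_rootgroup_subquotients}, $\alpha$ non-reductive) to $\breve U_{\alpha, m_\alpha + r-a-1}$, I would expand $[\xi, z] = [\xi, \prod_\beta x_\beta^z]$ using the standard identity $[\xi, ab] = [\xi,a]\,{}^{a}[\xi,b]$ and commutation relations. The ``diagonal'' contribution $[\xi, x_{-\alpha}^z]$ lands, by Lemma \ref{lm:usual_commutator_relations}(iii), in $\breve T^{\alpha, (m_\alpha+r-a-1)+(m_{-\alpha}+a-1)} = \breve T^{\alpha, r-1}$ (using $m_\alpha + m_{-\alpha} = 1$ from Lemma \ref{lm:structure_of_rootgroup_subquotients}), which is precisely $\cT^\alpha$ after passing to $\bG_r$; moreover the map $\lambda_{x_{-\alpha}^z}$ there is an isomorphism onto $\breve T^{\alpha,r-1}/\breve T^{\alpha,r}$. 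Every ``off-diagonal'' contribution $[\xi, x_\beta^z]$ with $\beta \neq -\alpha$ lies, by Lemma \ref{lm:usual_commutator_relations}(ii), in a product of root groups $\breve U_{p\alpha+q\beta, \ast}$; I must check each such term lands in $\bN_r^{-,r-1}$, which is where the height-maximality of $-\alpha$ and the membership $-\alpha \in A_z$ are used — any $p\alpha + q\beta$ that is again in $\Phi^-$ either has strictly larger height (forcing its component of $z$ to be deeper, hence the commutator to vanish mod $\bG_r^{r-1}$) or the depth bookkeeping via Lemma \ref{lm:epsilons_and_ms} pushes it into $\bG_r^{r-1}$; roots in $\Phi^+$ go into $\bN_r^{+}$ but combined with the depth count land in $\bN_r^{+,r-1} \subseteq \bN_r^{-,r-1}\cdot(\text{stuff})$ — more precisely I'd arrange the product so that the $\cT^\alpha$-component is unaffected. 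Thus projecting to $\cT^\alpha$ kills all off-diagonal terms and $\lambda_z$ agrees with $\lambda_{x_{-\alpha}^z}$, which is the desired isomorphism.

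For case (ii), the argument is identical in structure but with the depths shifted by one: $\alpha$ reductive means $\xi \in \bU_{\alpha,r}^{r-a-1}$ corresponds to $\breve U_{\alpha, m_\alpha + r-a-1}$ (note the different formula in Lemma \ref{lm:structure_of_rootgroup_subquotients} for reductive roots) and $x_{-\alpha}^z$, with $-\alpha$ reductive and in $A_z$, corresponds to $\breve U_{-\alpha, m_{-\alpha}+a}\smallsetminus \breve U_{-\alpha, m_{-\alpha}+a+1}$; using $m_\alpha + m_{-\alpha} = 0$ the diagonal commutator again lands in $\breve T^{\alpha,r-1}$ and $\lambda_{x_{-\alpha}^z}$ is an isomorphism onto $\breve T^{\alpha,r-1}/\breve T^{\alpha,r} = \cT^\alpha$. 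I'd package the common part of (i) and (ii) — the diagonal computation and the isomorphism claim — and only separate the two cases for the off-diagonal vanishing, where the precise exponents differ. One subtlety: when $A_z$ contains only reductive roots, I must also rule out interference from the non-reductive components $x_\beta^z$ of $z$; here the commutator $[\xi, x_\beta^z]$ with $\xi$ of depth $r-a-1$ and $x_\beta^z$ of depth $\geq a$ (in the relevant sense) still has total depth $\geq r-1$ by Lemma \ref{lm:epsilons_and_ms}, and one checks it cannot contribute to $\cT^\alpha$; for $a = 1$ the finer filtration $\{\bG_r^{1,i}\}$ and the pairings of Lemma \ref{lm:commutator_pairings} are what make this precise, and Lemma \ref{lm:Az}(ii) guarantees $A_z$ lies in a single $\Phi_i^+$ so the bookkeeping is uniform.

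The main obstacle I anticipate is the off-diagonal vanishing in the presence of the non-reductive jumps when $a = 1$: unlike the hyperspecial case of \cite{Lusztig_04}, $\bG_r^1$ is non-abelian, so one genuinely needs the refined filtration $\bG_r^{1,i}$, Lemma \ref{lm:g_filtration_abelian}, and the pairings of Lemma \ref{lm:commutator_pairings} to control which commutator terms survive the projection to $\cT^\alpha$. The bookkeeping with $m_\alpha, \varepsilon_\alpha$ and heights is routine given Lemma \ref{lm:epsilons_and_ms}, but getting the ordering of the factors $x_\beta^z$ and the choice of maximal-height $-\alpha$ to cooperate — so that no surviving term lands in $\cT^\alpha$ — is the delicate part, and is exactly where the hypothesis ``$-\alpha$ of maximal height in $A_z$'' is essential.
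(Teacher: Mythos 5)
Your overall strategy matches the paper's: abelianize via the pairings of Lemma \ref{lm:commutator_pairings} (for $a\geq 2$ using $\bN_r^a/\bN_r^{a+1}$, for $a=1$ using the refined quotients $\bN_r^{1,i}/\bN_r^{1,i+1}$), isolate the ``diagonal'' contribution $[\xi,x_{-\alpha}^z]$ and compute it via the $\SL_2$ reduction of Lemma \ref{lm:usual_commutator_relations}(iii), and show all other contributions are absorbed into $\bN_r^{-,r-1}$. Your diagonal bookkeeping in both cases (i) and (ii) is correct, and you correctly point to the relevant lemmas (\ref{lm:g_filtration_abelian}, \ref{lm:commutator_pairings}, \ref{lm:Az}, \ref{lm:epsilons_and_ms}).

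However, there is a genuine gap in your off-diagonal analysis, and it is exactly the part you flag as ``the delicate part.'' You write that contributions from $p\alpha+q\beta\in\Phi^-$ are killed because ``its component of $z$ is deeper,'' which is incoherent ($z$ has no components at negative roots; $p\alpha+q\beta$ is a target root of the commutator, not a factor of $z$), and you assert that positive-root contributions land in $\bN_r^{+,r-1}\subseteq\bN_r^{-,r-1}\cdot(\text{stuff})$, which is false: $\bN_r^+$ and $\bN_r^-$ are generated by opposite root groups and there is no such inclusion. The actual mechanism — which you do not state — is a dichotomy on the exponents $(p,q)$ in Lemma \ref{lm:usual_commutator_relations}(ii): since ${\rm ht}(\beta)\leq{\rm ht}(-\alpha)$ for all non-reductive $\beta$ appearing in the abelianized $\bar z$ (this is where height-maximality of $-\alpha$ and $-\alpha\in A_z$ enter, because only $\beta\in A_z$ survive the passage to the quotient), one has ${\rm ht}(p\alpha+q\beta)=-p\,{\rm ht}(-\alpha)+q\,{\rm ht}(\beta)<0$ whenever $p\geq q$, so those terms land in $\Phi^-$; and whenever $q>p$ one has $q\geq 2$, and the $\varepsilon$-bookkeeping via Lemma \ref{lm:epsilons_and_ms} (together with $a\geq 2$, or $\varepsilon_\beta=\varepsilon_i=1-\varepsilon_\alpha$ when $a=1$) forces the term to vanish in $\bG_r$ outright. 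In particular, \emph{no} positive-root contributions survive and \emph{nothing} off-diagonal ever touches $\cT^\alpha$, so there is no ``arranging'' to do. Without this dichotomy your outline does not close the argument.

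One further omission: you need to note that the reductive $\beta$-components contribute trivially, i.e.\ $[\bar\xi,\bar x_\beta^z]=1$ for $\beta$ reductive. This follows from Lemma \ref{lm:comm}(b) applied to $\beta$ (so $[\bG_r^{r-a},\bU_{\beta,r}^a]=1$, and $\bU_{\alpha,r}^{r-a}\subseteq\bG_r^{r-a}$). Your outline handles only the non-reductive $\beta$'s.
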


\begin{proof}
Parts (i) and (ii) can be proven in the same way. We give the full proof of (i) only.

\medskip
\noindent \emph{Proof of (i) when $a \geq 2$}. We work on $\overline \FF_q$-points. Assume that $A_z$ contains a non-reductive root and let $-\alpha$ be such a root of maximal height and $\alpha \in \Phi^-$ its opposite. Let $\xi \in \breve U_{\alpha,r}^{r-a}$ and let $\bar{\xi} \in \breve U_{\alpha,r}^{r-a}/\breve U_{\alpha,r}^{r-a+1}$ and $\bar z \in \breve N_r^a/\breve N_r^{a+1}$ be the images of $\xi$ and $z$ respectively. By Lemma \ref{lm:g_filtration_abelian} we may write 
\[ 
\bar{z} = \bar{x}_{-\alpha}^z \prod_{\beta \in \Phi^+ \text{ red.}} \bar{x}_{\beta}^z \cdot  \prod_{\substack{\beta \in \Phi^+ \text{ non-red., } \beta \neq -\alpha \\ {\rm ht}(\beta) \leq {\rm ht}(-\alpha) }} \bar{x}_{\beta}^z,
\]
where $\bar x_\beta^z \in \breve U_{\beta,r}^{a}/\breve U_{\beta,r}^{a+1}$ and where the products are taken in any order. Lemma \ref{lm:commutator_pairings} shows that $[\xi,z]$ is the product of $[\bar{\xi},\bar{x}_{-\alpha}^z]$ with all the $[\bar{\xi},\bar{x}_{\beta}^z]$ for $\beta \in \Phi^+$, the product taken in any order. If $\beta$ is reductive, then $[\bar\xi,\bar x_{\beta}^z] \in [\breve U_{\alpha,r}^{r-a}, \breve U_{\beta,r}^a] = 1$ by Lemma \ref{lm:comm}. If $\beta \neq -\alpha$ is non-reductive, then by assumption ${\rm ht}(\beta) \leq {\rm ht}(-\alpha)$. The commutator $[\bar\xi,\bar x_{\beta}^z]$ is the image of an element of 
\begin{equation}\label{eq:some_commut_in_pf_of_main_lm_1}
[\breve U_{\alpha, m_{\alpha} + (r-a) - 1}, \breve U_{\beta, m_{\beta} + a - 1}] \subseteq \prod_{\substack{p,q \in \bZ_{\geq 1} \\ p\alpha + q\beta \in \Phi}} \breve U_{p\alpha + q\beta, pm_{\alpha} + q m_{\beta} + p(r-a-1) + q(a-1)}
\end{equation}
\begin{lm}\label{lm:a2_commutator_of_nonreds}
The image of the right hand side of \eqref{eq:some_commut_in_pf_of_main_lm_1} in $\breve G_r$ lies in $\breve N_r^{-, r-1}$. 
\end{lm}

\begin{proof}
It is enough to show that for each $(p,q)$ occurring in the product, the corresponding factor is either contained in $\breve N_r^{-, r-1}$ or vanishes in $\breve G_r$. If $p\geq q$, then ${\rm ht}(\beta) \leq {\rm ht}(-\alpha)$ implies $p\alpha + q\beta \not\in \Phi^+$. So, we may assume that $q>p$ and in particular $q \geq 2$. It is enough to show that
\[
\breve U_{p\alpha + q\beta, pm_{\alpha} + q m_{\beta} + p(r-a-1) + q(a-1)} \subseteq \begin{cases} \breve U_{p\alpha + q\beta, m_{p\alpha + q\beta} + r} & \text{if $p\alpha + q\beta$ reductive} \\ \breve U_{p\alpha + q\beta, m_{p\alpha + q\beta} + r - 1} & \text{otherwise,} \end{cases}
\]
as both map to $1$ in $\breve G_r$. Equivalently, we have to show that
\[
pm_{\alpha} + qm_{\beta} - m_{p\alpha + q\beta} + p(r-a-1) + q(a-1) - (r-1) \geq \begin{cases} 1 & \text{if $p\alpha + q\beta$ reductive} \\ 0 & \text{otherwise.} \end{cases}
\]
But this holds as by Lemma \ref{lm:epsilons_and_ms}, $pm_{\alpha} + qm_{\beta} - m_{p\alpha + q\beta} = \lfloor p \varepsilon_\alpha + q\varepsilon_\beta \rfloor$ is $\geq 1$ if $p\alpha + q\beta$ is reductive and is $\geq 0$ otherwise, and as $q\geq 2$ and $a\geq 2$.
\end{proof}

Finally, $[\bar{\xi},\bar{x}_{-\alpha}^z] = [\xi,x_{-\alpha}^z] \in \cT^{\alpha}(\overline \FF_q)$ by Lemma \ref{lm:usual_commutator_relations}(iii). Thus $[\xi,z] \in \cT^{\alpha}(\overline \FF_q)\breve N_r^{-,r-1}$. Moreover, if we project onto $\cT^{\alpha}(\overline \FF_q)$, then only $[\bar{\xi},\bar{x}_{-\alpha}^z]$ survives and Lemma \ref{lm:usual_commutator_relations}(iii) proves the desired isomorphism $\lambda_z$. This finishes the proof of (i) in the case $a \geq 2$.

\medskip
\noindent \emph{Proof of (i) when $a = 1$}. Let $s \geq i \geq 1$ denote the integer such that $z \in \breve N^{1,i,\ast}$. (Note that $i\neq s+1$ as $A_z$ contains a non-reductive root by assumption). We have $\xi \in \breve U_{\alpha,r}^{r-1}$, and we let $\bar z$ denote the image of $z$ in $\breve N_r^{1,i}/\breve N_r^{1,i+1}$. By Lemma \ref{lm:g_filtration_abelian} we may write
\[
\bar{z} = \bar{x}_{-\alpha}^z \left(\prod_{\substack{\beta \in \Phi^+_i \colon \beta \neq -\alpha \\ {\rm ht}(\beta) \leq {\rm ht}(-\alpha)}} \bar{x}_{\beta}^z\right), 
\]
(product are taken in any order). By Lemma \ref{lm:commutator_pairings}, $[\xi,\bar z]$ is the product of $[\xi,\bar{x}_{-\alpha}^z]$ with all the $[\xi, \bar{x}_{\beta}^z]$ taken in any order. By assumption $\varepsilon_\beta = \varepsilon_i = \varepsilon_{-\alpha} = 1 -  \varepsilon_{\alpha}$. In particular, all $\beta$'s are non-reductive. Now, $[\xi,\bar x_{\beta}^z]$ is the image in $\breve G_r^{r-1,s+1}$ of an element of 
\begin{equation}\label{eq:some_commut_in_pf_of_main_lm_a1}
[\breve U_{\alpha, m_{\alpha} + r-2}, \breve U_{\beta, m_{\beta}}] \subseteq \prod_{\substack{p,q \in \bZ_{\geq 1} \\ p\alpha + q\beta \in \Phi}} \breve U_{p\alpha + q\beta, pm_{\alpha} + q m_{\beta} + p(r-2)}
\end{equation}
\begin{lm}\label{lm:a1_commutator_of_nonreds}
The image of the right hand side of \eqref{eq:some_commut_in_pf_of_main_lm_a1} in $\breve G_r$ lies in $\breve N_r^{-, r-1}$.
\end{lm}

\begin{proof}
Note that the right hand side of \eqref{eq:some_commut_in_pf_of_main_lm_a1} is contained in $\breve G_r^{r-1,s+1}$ (exactly as in the proof of Lemma \ref{lm:commutator_pairings}(ii)). Now the same arguments as in the proof Lemma \ref{lm:a2_commutator_of_nonreds} apply. If $p\geq q$, then ${\rm ht}(\beta) \leq {\rm ht}(-\alpha)$ implies $p\alpha + q\beta \not\in \Phi^+$, thus the corresponding factor of the product is contained in $\breve N_r^- \cap \breve G_r^{r-1,s+1} \subseteq \breve N_r^{-,r-1}$. Thus we may assume that $q>p$ and in particular $q\geq 2$. It is enough to show that 
\[
\breve U_{p\alpha + q\beta, pm_{\alpha} + q m_{\beta} + p(r-2)} \subseteq \begin{cases} \breve U_{p\alpha + q\beta, m_{p\alpha + q\beta} + r}&\text{if $p\alpha + q\beta$ is reductive} \\ \breve U_{p\alpha + q\beta, m_{p\alpha + q\beta} + r -1} &\text{otherwise,} \end{cases}
\]
as both map to $1$ in $\breve G_r$. Equivalently, we have to show that
\[
pm_{\alpha} + q m_{\beta} - m_{p\alpha + q\beta} + p(r-2) - (r-1) \geq \begin{cases} 1 & \text{if $p\alpha + q\beta$ is reductive} \\ 0 &\text{otherwise.} \end{cases}
\]
By Lemma \ref{lm:epsilons_and_ms}, this follows from $\lfloor p\varepsilon_\alpha + q\varepsilon_\beta \rfloor \geq 2$ if $p\alpha + q\beta$ is reductive, resp. to $\lfloor p\varepsilon_\alpha + q\varepsilon_\beta \rfloor \geq 1$ if $p\alpha + q\beta$ is non-reductive. But in any case we have $p\varepsilon_\alpha + q\varepsilon_\beta \geq \varepsilon_\alpha + 2(1-\varepsilon_\alpha) = 2-\varepsilon_\alpha > 1$ by assumptions. In particular, we are done in the case when $p\alpha + q\beta$ is non-reductive. If $p\alpha + q\beta$ is reductive, then $p\varepsilon_\alpha + q\varepsilon_\beta$ must also be an integer (by Lemma \ref{lm:epsilons_and_ms}) and hence $\geq 2$, and we are done in this case too.
\end{proof}

Finally, $[\xi,\bar{x}_{-\alpha}^z] \in \cT^{\alpha}(\overline \FF_q)$ by Lemma \ref{lm:usual_commutator_relations}(iii). Thus $[\xi,z] \in \cT^{\alpha}(\overline \FF_q)\breve N_r^{-,r-1}$. Moreover, if we project onto $\cT^{\alpha}(\overline \FF_q)$, then only $[\bar{\xi},\bar{x}_{-\alpha}^z]$ survives and Lemma \ref{lm:usual_commutator_relations}(iii) proves the desired isomorphism $\lambda_z$. This finishes the proof of (i). 
\end{proof}

\begin{rem}
We note that in the proof of \cite[Lemma 1.7]{Lusztig_04} there is an (easily correctable) mistake. It is claimed that whenever $-\alpha, \beta \in \Phi^+$ with $-\alpha \neq \beta$ and ${\rm ht}(-\alpha) \geq {\rm ht}(\beta)$, then $p\alpha + q\beta \not\in \Phi^+$ for all $p,q \in \bZ_{\geq 1}$. This is not true. For example, let $\Phi$ be of type $C_2$, let $\epsilon_1,\epsilon_2$ denote a basis for $X^{\ast}(T)$ such that the $\Phi^+ = \{\epsilon_1 - \epsilon_2, \epsilon_1 + \epsilon_2, 2\epsilon_1, 2\epsilon_2\}$. Then taking $\alpha = -2\epsilon_1$, $\beta = \epsilon_1 + \epsilon_2$. Then ${\rm ht}(-\alpha) = 3 > 2 = {\rm ht}(\beta)$. But $\alpha + 2\beta = 2\epsilon_2 \in \Phi^+$. Observe here that $\alpha + \beta \notin \Phi^+$, which contradicts the parenthetical assertion at the end of the proof of \cite[Lemma 1.7]{Lusztig_04}.

Surely, the statement of \cite[Lemma 1.7]{Lusztig_04} remains true. The place in its proof, where the abovementioned claim is used, can be corrected as follows: if $p\alpha + q\beta \in \Phi^+$ for some $p,q \in \bZ_{\geq 1}$, then $q \geq 2$ and the part of the commutator (as in the proof of Proposition \ref{prop:commutator_producing_torus_element}) inside $\bU_{p\alpha + q\beta,r}$ vanishes, since all roots are reductive and $r \geq 2$. \hfill $\Diamond$ 
\end{rem}

Let $\bK_r = \bU_r^- \cap \bN_r$. Let $\Phi' = \{\beta \in \Phi^+ \colon \bU_{\beta,r} \subseteq \bK_r \}$. Let $\mathcal{X}$ denote the set of all non-empty subsets $I \subseteq \Phi'$ satisfying
\begin{enumerate}[label=(\roman*)]
\item
the restriction of ${\rm ht} \colon \Phi^+ \rightarrow \bZ_{\geq 0}$ to $I$ is constant, and
\item
$I$ contains either only reductive or only non-reductive roots. 
\end{enumerate}
To $z \in \breve K_r^1 \sm \{1\}$ we attach a pair $(a_z,I_z)$ with  $1 \leq a_z \leq r-1$ and $I_z \in \mathcal{X}$. Define $a_z$ by $z \in \breve K_r^{a_z,\ast}$. Let $A_z$ be as in Lemma \ref{lm:Az}. If $A_z$ contains a non-reductive root, let $I_z \subseteq A_z$ be the subset of all non-reductive roots of maximal height. If $A_z$ contains only reductive roots, let $I_z \subseteq A_z$ be the subset of all roots of maximal height. We have a stratification into locally closed subsets
\begin{equation}\label{eq:stratification_of_Kh1}
\bK_r^1 \sm \{1\} = \bigsqcup_{a,I} \bK_r^{a,\ast,I}, \qquad \text{where $\bK_r^{a,\ast,I}(\overline{\FF}_q) = \{z \in \breve K_r^1 \sm \{1\} \colon (a_z,I_z) = (a,I)\}$}.
\end{equation}

\subsection{Cohomology of $\widehat \Sigma'$}\label{sec:hatSigmaprime} 

We now complete the proof of Theorem \ref{thm:altsum_coh_Sigma} by proving Claim \eqref{e:prime}. Using the stratification \eqref{eq:stratification_of_Kh1} and Proposition \ref{prop:commutator_producing_torus_element}, Claim  \eqref{e:prime} is proven in exactly the same way as in \cite[Lemma 1.9]{Lusztig_04}. For convenience, we sketch the arguments here. It is enough to show that $H^j_c(\widehat \Sigma'_w)_{\theta,\theta'} = 0$ for all $j \geq 0$. For a $\mathcal{T}'(\overline \FF_q)^{\sigma}$-module $M$ and a character $\chi$ of $\mathcal{T}'(\overline \FF_q)^{\sigma}$, write $M_{(\chi)}$ for the $\chi$-isotypic component of $M$. Note that $\mathcal{T}'(\overline \FF_q)^{\sigma}$ acts on $\widehat\Sigma'_w$ by
\[
t' \colon (x,x',u,u',z,\tau') \mapsto (x, t'x't^{\prime-1}, u, t'u't^{\prime -1}, z, \tau't^{\prime -1}).
\]
Hence $H^j_c(\widehat \Sigma'_w)$ is a $\mathcal{T}'(\FF_q)$-module. It is enough to show that $H^j_c(\widehat \Sigma'_w)_{(\chi)} = 0$ for any regular character $\chi$ of $\mathcal{T}'(\FF_q)$. Fix such a $\chi$. Set $N = \dot w U^{\prime -} \dot w^{-1}$, $N^- = \dot w U^{\prime} \dot w^{-1}$. The stratification \eqref{eq:stratification_of_Kh1} of $\bK_r^1 \smallsetminus \{1\}$ induces a stratification of $\widehat \Sigma'_w$ into locally closed subsets indexed by $1\leq a\leq r-1$ and $I \in \mathcal{X}$:
\begin{equation*}
\widehat\Sigma'_w = \bigsqcup_{a,I} \widehat\Sigma_w^{\prime,a,I} \qquad \text{where $\widehat\Sigma_w^{\prime,a,I}(\overline{\FF}_q) = \{(x,x',u,u',z,\tau') \in \widehat\Sigma'_w(\overline{\FF}_q) \colon z\in \breve K_r^{a,\ast,I} \}$}.
\end{equation*}
Note that each $\widehat \Sigma_w^{\prime, a, I}$ is stable under $\mathcal{T}'(\FF_q)$. Thus \eqref{e:prime} follows from 
\begin{equation}\label{e:Sigma w a I}
H_c^j(\widehat\Sigma_w^{\prime,a,I}, \overline{\mathbb{Q}}_{\ell})_{(\chi)} = 0 \qquad \text{for any fixed $a,I$.}
\end{equation}
To show \eqref{e:Sigma w a I}, choose a root $\alpha$ such that $-\alpha \in I$. Then $\bU_{\alpha,r} \subseteq \bU_r \cap \dot{w} \bU_r' \dot{w}^{-1}$. By Proposition \ref{prop:commutator_producing_torus_element} for any $z \in \breve K_r^{a,\ast,I}$, we have an isomorphism 
\begin{align*}
\lambda_z &\from \bU_{\alpha,r}^{r-a}/\bU_{\alpha,r}^{r-a+1} \stackrel{\sim}{\longrightarrow} \cT^\alpha, && \text{if $\alpha$ is non-reductive,} \\
\lambda_z &\from \bU_{\alpha,r}^{r-a-1}/\bU_{\alpha,r}^{r-a} \stackrel{\sim}{\longrightarrow} \cT^\alpha, && \text{if $\alpha$ is reductive.}
\end{align*}
Let $\pi$ denote the natural projection $\bU_{\alpha,r}^{r-a} \rightarrow \bU_{\alpha,r}^{r-a}/\bU_{\alpha,r}^{r-a+1}$ if $\alpha$ is non-reductive and the natural projection $\bU_{\alpha,r}^{r-a-1} \rightarrow \bU_{\alpha,r}^{r-a-1}/\bU_{\alpha,r}^{r-a}$ if $\alpha$ is reductive. Let $\psi$ be a section to $\pi$ such that $\pi\psi = 1$ and $\psi(1) = 1$. Let
\[
\mathcal{H}' \colonequals \{t' \in \mathcal{T}' \colon t^{\prime -1} \sigma(t') \in \dot{w}^{-1} \mathcal{T}^{\alpha} \dot{w} \}.
\]
This is a closed subgroup of $\mathcal{T}'$. For any $t' \in \mathcal{T}'$ define $f_{t'} \colon \widehat\Sigma_w^{\prime,a,I} \rightarrow \widehat\Sigma_w^{\prime,a,I}$ by
\[f_{t'}(x,x',u,u',z,\tau') = (x\sigma(\xi), \hat{x}^{\prime}, u, \sigma(t')^{-1} u' \sigma(t'), z, \tau' \sigma(t')), \]
where
\[
\xi = \psi\lambda_z^{-1}(\dot{w}\sigma(t')^{-1}t'\dot{w}^{-1}) \in \begin{cases} \bU_{\alpha,r}^{r-a-1} \subseteq \bU_r \cap \dot{w}\bU_r'\dot{w}^{-1} &\text{if $\alpha$ is reductive,}\\ \bU_{\alpha,r}^{r-a} \subseteq \bU_r \cap \dot{w}\bU_r'\dot{w}^{-1} & \text{otherwise,} \end{cases}
\]
and $\hat{x}' \in \bG_r$ is defined by the condition that
\[
x\sigma(\xi z \dot{w} \tau' \sigma(t')) \in u z \dot{w} \tau' \sigma(t')\sigma(t')^{-1}u'\sigma(t')\hat{x}'. 
\]
To check that $f_{t'}$ is well-defined we have to show $\hat{x}' \in \sigma(\bU_r')$. This is done with exactly the same computation as in the proof of \cite[Lemma 1.9]{Lusztig_04}, and we omit this. It is clear that $f_{t'} \colon \widehat\Sigma_w^{\prime,a,I} \rightarrow \widehat\Sigma_w^{\prime,a,I}$ is an isomorphism for any $t' \in \mathcal{H}'$. Moreover, since $\mathcal{T}'(\FF_q) \subseteq \mathcal{H}'$ and since for any $t' \in \mathcal{T}'(\FF_q)$ the map $f_{t'}$ coincides with the action of $t'$ in the $\mathcal{T}'(\FF_q)$-action on $\widehat\Sigma_w^{\prime,a,I}$ (we use $\psi(1) = 1$ here), it follows that we have constructed an action $f$ of $\mathcal{H}'$ on $\widehat\Sigma_w^{\prime,a,I}$ extending the $\mathcal{T}'(\FF_q)$-action.

If a connected group acts on a scheme, the induced action in the cohomology is constant. Thus for any $t' \in \mathcal{H}^{\prime \circ}$, the induced map $f_{t'}^{\ast} \colon H_c^j(\widehat\Sigma_w^{\prime,a,I}, \overline{\mathbb{Q}}_{\ell}) \rightarrow H_c^j(\widehat\Sigma_w^{\prime,a,I}, \overline{\mathbb{Q}}_{\ell})$ is constant when $t'$ varies in $\mathcal{H}^{\prime \circ}$. Hence $\mathcal{T}'(\FF_q) \cap \mathcal{H}^{\prime\circ}$ acts trivially on $H_c^j(\widehat\Sigma_w^{\prime,a,I}, \overline{\mathbb{Q}}_{\ell})$.

We can find some $m\geq 1$ such that $\sigma^m(\dot{w}^{-1}\mathcal{T}^{\alpha}\dot{w}) = \dot{w}^{-1}\mathcal{T}^{\alpha} \dot{w}$. Then 
\[
t' \mapsto t'\sigma(t')\sigma^2(t') \cdots \sigma^{m-1}(t') 
\]
defines a morphism $\dot{w}^{-1}\mathcal{T}^{\alpha}\dot{w} \rightarrow \mathcal{H}'$. Since $\mathcal{T}^{\alpha}$ is connected, its image is also connected and hence contained in $\mathcal{H}^{\prime \circ}$. If $t' \in (\dot{w}^{-1}\mathcal{T}^{\alpha}(\overline \FF_q)\dot{w})^{\sigma^m}$, then $N_{\sigma}^{\sigma^m}(t') \in \mathcal{T}'(\overline \FF_q)^{\sigma}$ and hence also $N_{\sigma}^{\sigma^m}(t') \in \mathcal{T}^{\prime}(\overline \FF_q)^{\sigma} \cap \mathcal{H}^{\prime \circ}(\overline \FF_q)$. Thus the action of  $N_{\sigma}^{\sigma^m}(t') \in \mathcal{T}^{\prime}(\overline \FF_q)^{\sigma}$ on $H_c^j(\widehat\Sigma_w^{\prime,a,I})$ is trivial for any $t' \in (\dot{w}^{-1}\mathcal{T}^{\alpha}(\overline \FF_q)\dot{w})^{\sigma^m}$. 

Finally, observe that if $H_c^j(\widehat\Sigma_w^{\prime,a,I}, \overline{\mathbb{Q}}_{\ell})_{(\chi)} \neq 0$, then the above shows that $t' \mapsto \chi(N_{\sigma}^{\sigma^m}(t'))$ must be the trivial character, which contradicts the regularity assumption on $\chi$. This establishes \eqref{e:Sigma w a I}, and hence also \eqref{e:prime}, which was the last outstanding claim in the proof of Theorem \ref{thm:altsum_coh_Sigma}.

\section{Representations of parahoric subgroups of $G(k)$}\label{sec:representations}

\subsection{The schemes $S_{T,U}$}

Let the notation be as at the beginning of Section \ref{sec:Sigma}. In particular, fix $r \geq 1$, a $k$-rational $\breve k$-split maximal torus $T$ in $G$, a point ${\bf x} \in \sA_{T,\breve k} \cap \sB_k$, and the unipotent radical $U$ of a Borel subgroup of $G_{\breve k}$, defined over $\breve k$ and containing $T_{\breve k}$. Let $d$ be the smallest positive integer such that $\sigma^d(U) = U$. To this data, we attach the $\FF_{q^d}$-subscheme of $\bG_r$
\begin{equation*}
S_{{\bf x}, T,U,r} \colonequals \{ x \in \bG_r \colon x^{-1}\sigma(x) \in \bU_r \}. 
\end{equation*}
To match the notation of \cite{Lusztig_04}, we write $S_{T,U}$ for $S_{{\bf x}, T, U, r}$.

\begin{lm} \label{lm:STU_prop}
$S_{T,U}$ is separated, smooth, and of finite type over $\FF_{q^d}$. Moreover, $\dim S_{T,U} = (r-1)\# \Phi^+ + \# \Phi^{+, \rm red}$, where $\Phi^+$ and $\Phi^{+,\rm red}$ are the roots and reductive roots of $T$ in $U$.
\end{lm}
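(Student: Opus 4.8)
The plan is to exhibit $S_{T,U}$ as the preimage of the smooth subscheme $\bU_r$ under the Lang map of $\bG_r$, and to read off all the assertions from this together with Lemma~\ref{lm:structure_of_rootgroup_subquotients}.

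First I would introduce the Lang map $L \from \bG_r \to \bG_r$, $x \mapsto x^{-1}\sigma(x)$, so that $S_{T,U} = L^{-1}(\bU_r)$ by definition. The only structural input needed is that $L$ is \'etale: its differential is an isomorphism at every point, since the differential of the Frobenius $\sigma$ vanishes identically, exactly as in the classical Deligne--Lusztig situation (in mixed characteristic this is read in the perfect-scheme sense of \cite{Zhu_17, BhattS_17}). Since $\bU_r$ is closed in $\bG_r$, the preimage $S_{T,U} = L^{-1}(\bU_r)$ is closed in $\bG_r$, hence affine; and as $\bU_r$ is defined over $\FF_{q^d}$ while $L$ is defined over $\FF_q \subseteq \FF_{q^d}$, the subscheme $S_{T,U} \subseteq \bG_{r,\FF_{q^d}}$ is an affine $\FF_{q^d}$-scheme of finite type, in particular separated and of finite type over $\FF_{q^d}$. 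For smoothness, base change $L$ along the closed immersion $\bU_r \hookrightarrow \bG_r$ to get an \'etale morphism $S_{T,U} \to \bU_r$; since $\bU_r$ is smooth over $\FF_{q^d}$ (it is a reduced group scheme of finite type over the perfect field $\FF_{q^d}$, cf.\ \S\ref{sec:subgroups_of_bGr}, using that $U_{\alpha,{\bf x}}$ is smooth by \cite[Theorem~8.3]{Yu_02}), the composite $S_{T,U} \to \bU_r \to \operatorname{Spec}\FF_{q^d}$ is smooth.

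For the dimension, \'etaleness of $L$ makes it quasi-finite, and $\bU_r$ is irreducible, so $S_{T,U}$ is pure of dimension $\dim\bU_r$. To compute $\dim\bU_r$ I would use the ordered product decomposition: for any fixed ordering of $\Phi^+$, the multiplication map $\prod_{\alpha\in\Phi^+}\bU_{\alpha,r}\to\bU_r$ is an isomorphism of schemes, obtained by applying $L_r^+$ and the projection $L_r^+P_{{\bf x},\cO}\tar\bG_{r,\overline{\FF}_q}$ to the corresponding product decomposition of the schematic closure of $U$ in $P_{{\bf x},\cO}$, which is part of the valued-root-datum structure of $\breve P_{\bf x}$ (cf.\ \cite[\S6.1]{BruhatT_72}, \cite[Theorem~8.3]{Yu_02}). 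Hence $\dim\bU_r = \sum_{\alpha\in\Phi^+}\dim\bU_{\alpha,r}$. By Lemma~\ref{lm:structure_of_rootgroup_subquotients}, $\bU_{\alpha,r}(\overline{\FF}_q)$ equals $\breve U_{\alpha,m_\alpha}/\breve U_{\alpha,m_\alpha+r}$ if $\alpha$ is reductive and $\breve U_{\alpha,m_\alpha}/\breve U_{\alpha,m_\alpha+r-1}$ otherwise; via the Chevalley parametrisation $u_\alpha$ and the identifications $\breve U_{\alpha,m}/\breve U_{\alpha,m+1}\cong\overline{\FF}_q$ these have $r$ (resp.\ $r-1$) ``coordinates'', so, $\bU_{\alpha,r}$ being smooth, $\dim\bU_{\alpha,r} = r$ for $\alpha\in\Phi^{+,\rm red}$ and $\dim\bU_{\alpha,r} = r-1$ for $\alpha\in\Phi^+\sm\Phi^{+,\rm red}$. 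Summing over $\Phi^+$,
\[
\dim S_{T,U} = \dim\bU_r = r\,\#\Phi^{+,\rm red} + (r-1)\bigl(\#\Phi^+ - \#\Phi^{+,\rm red}\bigr) = (r-1)\#\Phi^+ + \#\Phi^{+,\rm red}.
\]

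I expect the only delicate point to be foundational bookkeeping in the mixed-characteristic case: one must know that the \'etaleness of the Lang map, the stability of smoothness and dimension under \'etale base change, and the root-subgroup product decomposition of $\bU_r$ all persist verbatim for perfect schemes. None of these is affected by perfection, and $\bG_r$, $\bU_r$, $\bU_{\alpha,r}$ were already set up in the perfect category in \S\ref{sec:Preliminaries}, so this is routine; the only substantive departure from Lusztig's hyperspecial argument is that the dimension count uses the product decomposition of $\bU_r$ into root subgroups rather than merely the generation statement of Lemma~\ref{lm:GGr_generated_by_roots}.
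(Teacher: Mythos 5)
Your proposal is correct and follows essentially the same route as the paper: exhibit $S_{T,U}$ as the pullback of $\bU_r$ under the (finite) \'etale Lang map on $\bG_r$, then compute $\dim\bU_r$ root by root via Lemma~\ref{lm:structure_of_rootgroup_subquotients}. The paper simply asserts the dimension of $\bU_r$ to be immediate; you spell out the product decomposition into root subgroups, which is the right justification.
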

\begin{proof}
Indeed, $S_{T,U}$ is the pullback of $\bU_r$ under the finite \'etale Lang map $\bG_r \rar \bG_r$, $x \mapsto x^{-1} \sigma(x)$, and $\dim \bU_r = (r-1)\# \Phi^+ + \# \Phi^{+, \rm red}$ is immediate.
\end{proof}

The finite group $\breve G_r^\sigma \times \breve T_r^\sigma$ acts on $S_{T,U}$ by $(g,t) \colon x \mapsto gxt$. 

\begin{rem}
$S_{T,U}$ admits also a natural (free) action of $\bU_r \cap \sigma^{-1}(\bU_r)$ by right multiplication. If $r = 1$, the quotient of $S_{T,U}$ by this action is ($\overline{\FF}_q$-isomorphic to) a classical Deligne--Lusztig variety for the reductive $\FF_q$-group $\bG_1$. \hfill $\Diamond$
\end{rem}

\begin{lm}\label{lm:Sigma_STU}
For $(T,U)$ and $(T',U')$ as in Section \ref{sec:Sigma}, the map 
\[ \breve G_r^{\sigma} \backslash (S_{T,U} \times S_{T',U'}) \stackrel{\sim}{\rar} \Sigma, \quad (g,g') \mapsto (g^{-1} \sigma(g), g^{\prime -1}\sigma(g'), g^{-1} g'),
\]
is a $\breve T_r^{\sigma}\times \breve T_r^{\prime \sigma}$-equivariant isomorphism, where $\breve G_r^{\sigma}$ acts diagonally on $S_{T,U} \times S_{T',U'}$.
\end{lm}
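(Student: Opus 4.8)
The plan is to recognise the stated map $\alpha\from S_{T,U}\times S_{T',U'}\to\Sigma$, $(g,g')\mapsto(g^{-1}\sigma(g),g^{\prime-1}\sigma(g'),g^{-1}g')$, as a quotient by a free finite group action, and to produce its inverse by hand using Lang's theorem. First I would check that $\alpha$ is a well-defined morphism landing in $\Sigma$: the first two coordinates lie in the subgroups prescribed by the definitions of $S_{T,U}$ and $S_{T',U'}$, and the relation $x\sigma(y)=yx'$ holds identically, since both sides equal $g^{-1}\sigma(g')$. Next, $\alpha$ is invariant under the diagonal left $\breve G_r^{\sigma}$-action: for $h\in\breve G_r^{\sigma}$ we have $\sigma(h)=h$, whence $(hg)^{-1}\sigma(hg)=g^{-1}\sigma(g)$, $(hg')^{-1}\sigma(hg')=g^{\prime-1}\sigma(g')$ and $(hg)^{-1}(hg')=g^{-1}g'$. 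Since $\breve G_r^{\sigma}$ is finite and acts freely on the (affine) scheme $S_{T,U}\times S_{T',U'}$, the quotient $Q\colonequals\breve G_r^{\sigma}\backslash(S_{T,U}\times S_{T',U'})$ is a scheme, and $\alpha$ descends to a morphism $\bar\alpha\from Q\to\Sigma$. Equivariance of $\bar\alpha$ for $\breve T_r^{\sigma}\times\breve T_r^{\prime\sigma}$ is a direct substitution using $\sigma(t)=t$ for $t\in\breve T_r^{\sigma}$; so the entire content of the lemma is that $\bar\alpha$ is an isomorphism.

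Here the one genuinely non-formal input is Lang's theorem: as $\bG_r$ is a smooth connected affine $\FF_q$-group, the Lang map $L\from\bG_r\to\bG_r$, $g\mapsto g^{-1}\sigma(g)$, is a $\breve G_r^{\sigma}$-torsor (in particular finite \'etale and surjective). I would build the inverse explicitly. Let $\widetilde\Sigma\colonequals\Sigma\times_{\bG_r}\bG_r$ be the fibre product of the first-coordinate map $\Sigma\to\bG_r$ with $L$; it parametrises tuples $(x,x',y,g)$ with $g^{-1}\sigma(g)=x$, and $\widetilde\Sigma\to\Sigma$ is a $\breve G_r^{\sigma}$-torsor by base change. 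Define $\beta_0\from\widetilde\Sigma\to S_{T,U}\times S_{T',U'}$ by $(x,x',y,g)\mapsto(g,gy)$. This is well defined: $g\in S_{T,U}$ since $g^{-1}\sigma(g)=x$, and $gy\in S_{T',U'}$ because
\[
(gy)^{-1}\sigma(gy)=y^{-1}\,g^{-1}\sigma(g)\,\sigma(y)=y^{-1}x\sigma(y)=y^{-1}(yx')=x',
\]
and it is precisely here that the defining relation of $\Sigma$ is used. As $\beta_0$ intertwines the $\breve G_r^{\sigma}$-action on the coordinate $g$ with the diagonal action on the target, it descends to a morphism $\beta\from\Sigma\to Q$.

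It then remains to verify $\bar\alpha\circ\beta=\mathrm{id}_\Sigma$ and $\beta\circ\bar\alpha=\mathrm{id}_Q$, which is routine: $\bar\alpha(\beta(x,x',y))=(g^{-1}\sigma(g),(gy)^{-1}\sigma(gy),g^{-1}gy)=(x,x',y)$ by the displayed computation, while $\beta(\bar\alpha(g,g'))=[(g,\,g\cdot g^{-1}g')]=[(g,g')]$ on choosing $g$ itself as a lift of $x=g^{-1}\sigma(g)$ (independence of this choice being the well-definedness of $\beta$). These identities can be checked after the faithfully flat covers $\widetilde\Sigma\to\Sigma$ and $S_{T,U}\times S_{T',U'}\to Q$, so no point-set issues intervene. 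I expect no serious obstacle here; the only thing needing care is that $\bar\alpha$ must be produced as an isomorphism of \emph{schemes}, not merely a bijection on $\overline{\FF}_q$-points --- hence the explicit inverse $\beta$, rather than a point-counting argument, which would fail in characteristic $p$.
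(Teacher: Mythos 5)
The paper states this lemma without proof (it is the analogue of a step in Lusztig's argument, and the subsequent Proposition~\ref{prop:Lusztig_22} simply refers to \cite{Lusztig_04} for the proof that uses it). Your argument is correct and is essentially the natural one: the well-definedness, $\breve G_r^{\sigma}$-invariance, and $\breve T_r^{\sigma}\times\breve T_r^{\prime\sigma}$-equivariance are direct verifications, and the substantive step --- producing an inverse as a morphism of schemes --- is handled cleanly by base-changing along the Lang map $L \from \bG_r \to \bG_r$ (so that the $\breve G_r^{\sigma}$-torsor structure of $L$ pulls back to $\widetilde\Sigma \to \Sigma$), defining $\beta_0(x,x',y,g) = (g,gy)$ on the cover, checking $(gy)^{-1}\sigma(gy)=x'$ using exactly the defining relation of $\Sigma$, and descending. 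Your closing remark is also well taken: in the equal-characteristic case the schemes involved are not assumed perfect, and a bijective morphism of reduced finite-type schemes over $\overline{\FF}_q$ need not be an isomorphism, so exhibiting $\beta$ is genuinely necessary rather than cosmetic.

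One small point that is about the paper's statement rather than your proof: as written, $S_{T,U}$ is the Lang preimage of $\bU_r$ while $\Sigma$ is defined with first two coordinates in $\sigma(\bU_r)\times\sigma(\bU'_r)$; for $g\in S_{T,U}$ one has $g^{-1}\sigma(g)\in\bU_r$, not in $\sigma(\bU_r)$, so the map would not literally land in $\Sigma$ unless $U$ is $\sigma$-stable. This is evidently a typo (Lusztig's original uses the $\sigma$-twisted unipotent radical consistently in both definitions). With either consistent convention your computation goes through verbatim; your write-up implicitly assumes the definitions are matched, which is the intended reading.
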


By functoriality of cohomology, the $\breve G_r^\sigma \times \breve T_r^\sigma$-action on $S_{T,U}$ induces for each $i \in \bZ$ a $\breve G_r^\sigma \times \breve T_r^\sigma$-action on $H_c^i(S_{T,U}, \cool)$. For a character $\theta \colon \breve T_r^\sigma \rar \cool^{\times}$, let $H_c^i(S_{T,U}, \cool)_{\theta}$ denote the $\theta$-isotypic component. It is stable under the action of $\breve G_r^{\sigma}$. 

\begin{Def}\label{Def:RTtheta}
We define the virtual  $\breve G_r^{\sigma}$-representation with $\cool$-coefficients 
\[
R_{{\bf x}, T,U,r}^\theta \colonequals \sum_{i\in \bZ} (-1)^i H_c^i(S_{T,U}, \cool)_{\theta}.
\]
By pullback, we can also consider $R_{{\bf x}, T, U, r}^\theta$ a virtual representation of the parahoric subgroup $\breve P_{\bf x}^\sigma$ of $G(k)$. If ${\bf x}$ is clear from the context, we write $R_{T,U,r}^\theta$ for $R_{{\bf x}, T,U,r}^\theta$. 

Moreover, by Corollary \ref{cor:indep_of_U_irreduciblity} below, $R_{T,U,r}^{\theta}$ does not depend on the choice of $U$, if $\theta$ is regular. In this case we denote $R_{T,U,r}^\theta$ by $R_{T,r}^\theta$. For the dependence on $r$ see Section \ref{sec:level_compat}.
\end{Def}

Recall the group $N_{\bG_r}(\bT_r,\bT'_r)$ from Section \ref{sec:Weyl_Bruhat}. The next few results are all corollaries of Theorem \ref{thm:altsum_coh_Sigma}, generalizing \cite[Propositions 2.2, 2.3, Corollary 2.4]{Lusztig_04}.

\begin{prop}\label{prop:Lusztig_22}
Assume that $r \geq 2$. Let $(T,U)$, $(T',U')$ be as in Section \ref{sec:Sigma}. Further, let $\theta \from \breve T_r^\sigma \to \overline \QQ_\ell^\times$, $\theta' \from \breve T_r^{\prime \sigma} \to \overline \QQ_\ell^\times$ be two characters. 
\begin{itemize}
\item[(i)] Let $i,i' \in \bZ$. Assume that an irreducible $\breve G_r^\sigma$-representation appears in the dual space $(H_c^i(S_{T,U}, \cool)_{\theta^{-1}})^{\vee}$ of $H_c^i(S_{T,U}, \cool)_{\theta}$ and in $H_c^{i'}(S_{T',U'}, \cool)_{\theta'}$. Then there exists an integer $n \geq 1$ and a $g \in N_{\bG_r}(\bT_r',\bT_r)(\FF_{q^n})$ such that the adjoint action of $g$ carries $\theta \circ N_\sigma^{\sigma^n}|_{\cT(\overline \FF_q)^{\sigma^n}}$ to $\theta' \circ N_\sigma^{\sigma^n}|_{\cT'(\overline \FF_q)^{\sigma^n}}$. 
\item[(ii)] Assume that an irreducible $\breve G_r^\sigma$-representation occurs in $R_{T,U,r}^\theta$ and $R_{T',U',r}^{\theta'}$. Then there exist some $n \geq 1$ and $g \in N_{\bG_r}(\bT_r',\bT_r)(\FF_{q^n})$ such that the adjoint action of $g$ carries $\theta \circ N_\sigma^{\sigma^n}|_{\cT(\overline \FF_q)^{\sigma^n}}$ to $\theta' \circ N_\sigma^{\sigma^n}|_{\cT'(\overline \FF_q)^{\sigma^n}}$.
\end{itemize}
\end{prop}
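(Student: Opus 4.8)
The plan is to derive both parts from Lemma~\ref{lm:intertwiner_of_characters_nonzero_coh} and Theorem~\ref{thm:altsum_coh_Sigma} by transporting cohomological information from $S_{T,U}$ and $S_{T',U'}$ to $\Sigma$ along the isomorphism of Lemma~\ref{lm:Sigma_STU}, in the spirit of \cite[Propositions 2.2, 2.3]{Lusztig_04}. For part (i), I would first note that the diagonal left-multiplication action of $\breve G_r^\sigma$ on $S_{T,U}\times S_{T',U'}$ is free (already on $S_{T,U}$ it is), so Lemma~\ref{lm:Sigma_STU} exhibits $\Sigma$ as the quotient; as $\#\breve G_r^\sigma$ is invertible in $\cool$, this gives for each $j$ a $\breve T_r^\sigma\times\breve T_r^{\prime\sigma}$-equivariant identification $H_c^j(\Sigma,\cool)\cong H_c^j(S_{T,U}\times S_{T',U'},\cool)^{\breve G_r^\sigma}$. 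Combining with the Künneth decomposition $H_c^j(S_{T,U}\times S_{T',U'})\cong\bigoplus_{i+i'=j}H_c^i(S_{T,U})\otimes H_c^{i'}(S_{T',U'})$ (equivariant for $\breve G_r^\sigma\times\breve T_r^\sigma\times\breve T_r^{\prime\sigma}$), then passing to the $(\theta^{-1},\theta')$-isotypic component for the torus factors and to $\breve G_r^\sigma$-invariants, one sees that $\bigl(H_c^i(S_{T,U})_{\theta^{-1}}\otimes H_c^{i'}(S_{T',U'})_{\theta'}\bigr)^{\breve G_r^\sigma}$ is a direct summand of $H_c^{i+i'}(\Sigma)_{\theta^{-1},\theta'}$.

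By hypothesis $\rho$ occurs in $(H_c^i(S_{T,U})_{\theta^{-1}})^\vee$ and in $H_c^{i'}(S_{T',U'})_{\theta'}$; since $\cool[\breve G_r^\sigma]$ is semisimple this says $\rho$ is a direct summand of $(H_c^i(S_{T,U})_{\theta^{-1}})^\vee$ and of $H_c^{i'}(S_{T',U'})_{\theta'}$, hence
\[
\bigl(H_c^i(S_{T,U})_{\theta^{-1}}\otimes H_c^{i'}(S_{T',U'})_{\theta'}\bigr)^{\breve G_r^\sigma}=\Hom_{\breve G_r^\sigma}\!\bigl((H_c^i(S_{T,U})_{\theta^{-1}})^\vee,\,H_c^{i'}(S_{T',U'})_{\theta'}\bigr)\neq 0 .
\]
Therefore $H_c^{i+i'}(\Sigma,\cool)_{\theta^{-1},\theta'}\neq 0$, and Lemma~\ref{lm:intertwiner_of_characters_nonzero_coh} produces the required $n\geq 1$ and $g\in N_{\bG_r}(\bT_r',\bT_r)(\FF_{q^n})$ carrying $\theta\circ N_\sigma^{\sigma^n}|_{\cT(\overline\FF_q)^{\sigma^n}}$ to $\theta'\circ N_\sigma^{\sigma^n}|_{\cT'(\overline\FF_q)^{\sigma^n}}$. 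For part (ii), if $\rho$ occurs in the virtual representation $R_{T,U,r}^\theta=\sum_i(-1)^iH_c^i(S_{T,U})_\theta$, then, the multiplicity of $\rho$ being an alternating sum of nonnegative integers, $\rho$ occurs in $H_c^i(S_{T,U})_\theta$ for some $i$, and likewise in $H_c^{i'}(S_{T',U'})_{\theta'}$ for some $i'$. To match the shape of the hypothesis of (i) I would use Poincaré duality: $S_{T,U}$ is smooth of pure dimension $D$ over $\overline\FF_q$, so $H_c^i(S_{T,U})^\vee\cong H^{2D-i}(S_{T,U})(D)$, and together with the equality $\sum_i(-1)^i[H_c^i(X)]=\sum_i(-1)^i[H^i(X)]$ of virtual $\breve G_r^\sigma\times\breve T_r^\sigma$-representations (the Tate twist being invisible to these algebraic group actions) this gives $(R_{T,U,r}^\theta)^\vee\cong R_{T,U,r}^{\theta^{-1}}$. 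Hence $\rho^\vee$ occurs in $R_{T,U,r}^{\theta^{-1}}$, so in $H_c^i(S_{T,U})_{\theta^{-1}}$ for some $i$, i.e.\ $\rho$ occurs in $(H_c^i(S_{T,U})_{\theta^{-1}})^\vee$, and part (i) applies.

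The genuine content sits entirely in Lemma~\ref{lm:intertwiner_of_characters_nonzero_coh}, Theorem~\ref{thm:altsum_coh_Sigma}, and Lemma~\ref{lm:Sigma_STU}, all already available, so the remaining work is bookkeeping. The two points that need care are: (a) \emph{freeness} of the $\breve G_r^\sigma$-action is precisely what allows passage between $H_c^\bullet(\Sigma)$ and $\breve G_r^\sigma$-invariants in $H_c^\bullet(S_{T,U}\times S_{T',U'})$, which uses the characteristic-zero coefficients; and (b) keeping straight the contragredients and the inverse characters $\theta\leftrightarrow\theta^{-1}$ — this is why (ii) requires the extra Poincaré-duality step rather than following from (i) on the nose. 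Alternatively, one may bypass the Euler-characteristic identity and simply re-run the argument of \cite[Proposition 2.3]{Lusztig_04} directly.
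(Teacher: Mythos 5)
Your proof of part (i) is correct and is, in substance, the argument the paper delegates to \cite[Proposition~2.2]{Lusztig_04}: freeness of the diagonal $\breve G_r^\sigma$-action makes $\Sigma$ a quotient of $S_{T,U}\times S_{T',U'}$, so (with $\cool$-coefficients) $H_c^j(\Sigma)\cong H_c^j(S_{T,U}\times S_{T',U'})^{\breve G_r^\sigma}$; combined with K\"unneth, the $(\theta^{-1},\theta')$-isotypic piece of $H_c^{i+i'}(\Sigma)$ contains $\Hom_{\breve G_r^\sigma}\bigl((H_c^i(S_{T,U})_{\theta^{-1}})^\vee,\,H_c^{i'}(S_{T',U'})_{\theta'}\bigr)$ as a summand, which is nonzero under the hypothesis; then Lemma~\ref{lm:intertwiner_of_characters_nonzero_coh} concludes.

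Part (ii) is also correct in outline, but the reduction to (i) rests on the isomorphism $(R_{T,U,r}^\theta)^\vee\cong R_{T,U,r}^{\theta^{-1}}$, and your derivation of that uses two facts: Poincar\'e duality for the smooth, separated, finite-type scheme $S_{T,U}$ (fine, by Lemma~\ref{lm:STU_prop}), and the equivariant Euler-characteristic comparison $\sum_i(-1)^i[H_c^i(X)]=\sum_i(-1)^i[H^i(X)]$ in the representation ring of the finite group acting. You assert the latter without comment, but it is not a formal consequence of Poincar\'e duality: duality alone gives $\sum(-1)^i[H_c^i(X)]=\bigl(\sum(-1)^i[H^i(X)]\bigr)^\vee$, so the identity you invoke amounts to the self-duality of the equivariant compactly-supported Euler characteristic. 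That is true, but it is a genuine theorem (it reduces, via \cite[Theorem~3.2]{DeligneL_76}, to showing that a finite $p$-power-order automorphism $u$ of a quasi-projective $\overline\FF_q$-scheme $Y$ satisfies $\Tr(u,\chi_c(Y))=\Tr(u^{-1},\chi_c(Y))$, which follows from the identification of this Lefschetz number with $\chi_c(Y^u)$), and you should cite it rather than state it as obvious. Note that the same fact is implicitly invoked in the proof of Proposition~\ref{prop:Lusztig_23}, where the inner-product computation requires the same $(R^\theta)^\vee\cong R^{\theta^{-1}}$; so the gap is consistent with how the paper itself elides it. Finally, a small quibble: Theorem~\ref{thm:altsum_coh_Sigma} is not actually used here (that result needs $\theta$ regular, which Proposition~\ref{prop:Lusztig_22} does not assume); the genuine inputs are only Lemma~\ref{lm:intertwiner_of_characters_nonzero_coh}, Lemma~\ref{lm:Sigma_STU}, K\"unneth, Poincar\'e duality, and the Euler-characteristic comparison.
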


\begin{proof}
The proof (using Lemma \ref{lm:Sigma_STU} and Lemma \ref{lm:intertwiner_of_characters_nonzero_coh}) is literally the same as the proof of \cite[Proposition 2.2]{Lusztig_04}. We omit the details.
\end{proof}

\begin{prop}\label{prop:Lusztig_23}
Assume that $r \geq 2$. Let $(T,U,\theta)$ and $(T',U',\theta)$ be two triples as in Proposition \ref{prop:Lusztig_22}. Assume that $\theta$ or $\theta'$ is regular. Then 
\[ \langle R_{T,U,r}^\theta, R_{T',U',r}^{\theta'} \rangle = \#\{ w \in W_{\bf x}(T,T')^{\sigma} \colon \theta \circ {\rm Ad}(w) = \theta' \}.
\]
\end{prop}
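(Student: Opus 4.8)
The plan is to reduce the inner product $\langle R_{T,U,r}^\theta, R_{T',U',r}^{\theta'}\rangle$ to the alternating sum of $\theta^{-1},\theta'$-isotypic cohomology of $\Sigma$, and then invoke Theorem \ref{thm:altsum_coh_Sigma}. First I would recall the standard formula: for virtual representations built from cohomology of varieties with a finite group action, the pairing $\langle R_{T,U,r}^\theta, R_{T',U',r}^{\theta'}\rangle$ over $\breve G_r^\sigma$ equals $\sum_{i,j}(-1)^{i+j}\dim\big(H_c^i(S_{T,U})_{\theta}\otimes H_c^j(S_{T',U'})_{\theta'}\big)^{\breve G_r^\sigma}$, where $\breve G_r^\sigma$ acts diagonally (and we use that the $\breve G_r^\sigma$-action commutes with the torus actions, so the isotypic decompositions are $\breve G_r^\sigma$-stable). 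By the Künneth formula this is $\sum_k (-1)^k \dim H_c^k(S_{T,U}\times S_{T',U'})_{\theta,\theta'}^{\breve G_r^\sigma}$.

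Next I would apply Lemma \ref{lm:Sigma_STU}: the quotient $\breve G_r^\sigma\backslash(S_{T,U}\times S_{T',U'})$ is $\breve T_r^\sigma\times\breve T_r^{\prime\sigma}$-equivariantly isomorphic to $\Sigma$. Since $\breve G_r^\sigma$ is a finite group acting freely (the action on $S_{T,U}$ by left translation composed with the Lang-torsor structure is free, as $\breve G_r^\sigma$ acts freely on $\bG_r$ by left multiplication), taking $\breve G_r^\sigma$-invariants of the cohomology of the product is the same as taking the cohomology of the quotient $\Sigma$; more precisely $H_c^k(S_{T,U}\times S_{T',U'})^{\breve G_r^\sigma}\cong H_c^k(\Sigma)$, compatibly with the residual $\breve T_r^\sigma\times\breve T_r^{\prime\sigma}$-action. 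Passing to the $(\theta^{-1},\theta')$-isotypic part (the sign on $\theta$ comes from the fact that $\breve G_r^\sigma$ acts on $S_{T,U}$ by $x\mapsto gxt$, so the dual/contragredient intervenes exactly as in the case $r=1$ in \cite{Lusztig_04}; this is the point where one must track which isotypic component of $H_c^*(S_{T,U})$ is being paired against which), we obtain
\[
\langle R_{T,U,r}^\theta, R_{T',U',r}^{\theta'}\rangle = \sum_{k\in\bZ}(-1)^k \dim H_c^k(\Sigma,\cool)_{\theta^{-1},\theta'}.
\]
Then, since one of $\theta,\theta'$ is regular, Theorem \ref{thm:altsum_coh_Sigma} applies (if $\theta'$ is regular rather than $\theta$, use that $\Sigma$ for the pair $(T,U),(T',U')$ is related by the obvious flip $(x,x',y)\mapsto(x',x,y^{-1})$ to $\Sigma$ for $(T',U'),(T,U)$, swapping the roles of $\theta$ and $\theta'$; the count on the right is visibly symmetric under $w\mapsto w^{-1}$, since $\theta\circ\mathrm{Ad}(\dot w)=\theta'$ iff $\theta'\circ\mathrm{Ad}(\dot w^{-1})=\theta$). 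This gives $\langle R_{T,U,r}^\theta, R_{T',U',r}^{\theta'}\rangle = \#\{w\in W_{\bx}(T,T')^\sigma : \theta\circ\mathrm{Ad}(\dot w)=\theta'\}$, and since the condition $\theta\circ\mathrm{Ad}(\dot w)=\theta'$ depends only on the class $w$, not on the representative $\dot w$ (as $\theta,\theta'$ are characters of the respective tori), this is exactly the claimed formula.

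The main obstacle is the bookkeeping in the second step: verifying that the $\breve G_r^\sigma$-action is free (so that cohomology of the quotient computes invariants — this uses that the Lang map is a $\breve G_r^\sigma$-torsor, hence in particular $\breve G_r^\sigma$ acts freely on $S_{T,U}$), and correctly matching the isotypic components through the isomorphism of Lemma \ref{lm:Sigma_STU}, so that $\theta$ on the $S_{T,U}$-factor really pairs with $\theta^{-1}$ on $\Sigma$. Both points are handled exactly as in \cite[Proof of Proposition 2.3]{Lusztig_04}, and nothing in the non-reductive generalization affects this part of the argument — all the new difficulties were already absorbed into the proof of Theorem \ref{thm:altsum_coh_Sigma}. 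So in fact the cleanest write-up is simply: "Combining Lemma \ref{lm:Sigma_STU}, the Künneth formula, and the freeness of the $\breve G_r^\sigma$-action as in \cite[Proposition 2.3]{Lusztig_04}, the left-hand side equals $\sum_k(-1)^k\dim H_c^k(\Sigma)_{\theta^{-1},\theta'}$, and the claim follows from Theorem \ref{thm:altsum_coh_Sigma}."
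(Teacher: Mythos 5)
Your proof takes essentially the same route as the paper: reduce the inner product to $\sum_i (-1)^i\dim H_c^i(\Sigma)_{\theta^{-1},\theta'}$ via Lemma \ref{lm:Sigma_STU} and the K\"unneth formula, then invoke Theorem \ref{thm:altsum_coh_Sigma}. The one useful addition you make that the paper leaves implicit is the symmetry argument reducing the case ``$\theta'$ regular'' to ``$\theta$ regular'' by the flip $(x,x',y)\mapsto(x',x,y^{-1})$, which is indeed needed since Theorem \ref{thm:altsum_coh_Sigma} is stated only under regularity of $\theta$.
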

\begin{proof}
A standard computation using Lemma \ref{lm:Sigma_STU} and the K\"unneth formula shows that \\ $\langle R_{T,U}^\theta, R_{T',U'}^{\theta'} \rangle = \sum_{i \in \bZ} \dim H_c^i(\Sigma, \cool)_{\theta^{-1},\theta'}$. Now apply Theorem \ref{thm:altsum_coh_Sigma}.
\end{proof}

\begin{cor}\label{cor:indep_of_U_irreduciblity}
Assume that $r\geq 2$. Let $\theta \colon \breve T_r^{\sigma} \rar \cool^{\times}$ be regular.
\begin{itemize}
\item[(i)] $R_{T,U,r}^{\theta}$ is independent of the choice of $U$.
\item[(ii)] If additionally the stabilizer of $\theta$ in $W_{\bf x}(T,T)^{\sigma}$ is trivial, then $\pm R_{T,U,r}^{\theta}$ is an irreducible representation of $\breve G_r^\sigma$ and of $P_{\bf x}(\cO_k)$.
\end{itemize}
\end{cor}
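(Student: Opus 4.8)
The plan is to deduce both parts directly from the inner-product formula of Proposition \ref{prop:Lusztig_23}, exactly mimicking the argument in \cite[Corollary 2.4]{Lusztig_04}. First I would prove (i). Fix two choices $U, U'$ of unipotent radicals of Borels of $G_{\breve k}$ containing $T_{\breve k}$; by Proposition \ref{prop:Lusztig_23} applied with $T' = T$ one gets
\[
\langle R_{T,U,r}^\theta, R_{T,U',r}^\theta \rangle = \#\{w \in W_{\bf x}(T,T)^\sigma : \theta \circ \mathrm{Ad}(w) = \theta\} = \langle R_{T,U,r}^\theta, R_{T,U,r}^\theta \rangle = \langle R_{T,U',r}^\theta, R_{T,U',r}^\theta \rangle,
\]
since $W_{\bf x}(T,T)$ is just the finite Weyl group $W_{\bf x}(T)$ and the right-hand count is the same in all three cases. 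Writing $R_{T,U,r}^\theta = \sum_\rho m_\rho \rho$ and $R_{T,U',r}^\theta = \sum_\rho m_\rho' \rho$ as virtual sums over irreducible $\breve G_r^\sigma$-representations $\rho$, the displayed equalities say $\sum_\rho m_\rho m_\rho' = \sum_\rho m_\rho^2 = \sum_\rho (m_\rho')^2$, whence $\sum_\rho (m_\rho - m_\rho')^2 = 0$, so $m_\rho = m_\rho'$ for all $\rho$ and $R_{T,U,r}^\theta = R_{T,U',r}^\theta$ as virtual representations. This proves (i), and justifies the notation $R_{T,r}^\theta$.

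For (ii), assume in addition that the stabilizer of $\theta$ in $W_{\bf x}(T,T)^\sigma = W_{\bf x}(T)^\sigma$ is trivial. Then Proposition \ref{prop:Lusztig_23} with $T' = T$, $U' = U$, $\theta' = \theta$ gives $\langle R_{T,U,r}^\theta, R_{T,U,r}^\theta \rangle = 1$. A virtual representation with norm $1$ over $\bZ$ is $\pm$ an irreducible: indeed $\sum_\rho m_\rho^2 = 1$ with $m_\rho \in \bZ$ forces exactly one $m_\rho$ to be $\pm 1$ and all others $0$. Hence $\pm R_{T,U,r}^\theta$ is an irreducible $\breve G_r^\sigma$-representation. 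Since $R_{T,U,r}^\theta$ is by construction (Definition \ref{Def:RTtheta}) pulled back along the surjection $\breve P_{\bf x}^\sigma = P_{\bf x}(\cO_k) \twoheadrightarrow \breve G_r^\sigma$, and pullback along a surjection of finite groups preserves irreducibility, $\pm R_{T,U,r}^\theta$ is also irreducible as a representation of $P_{\bf x}(\cO_k)$. This completes the proof.

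I do not anticipate a genuine obstacle here: the entire content is packaged into Proposition \ref{prop:Lusztig_23} (and hence ultimately Theorem \ref{thm:altsum_coh_Sigma}), and what remains is the standard ``norm-one virtual character'' manipulation. The one point worth a sentence of care is the identification $W_{\bf x}(T,T) = W_{\bf x}(T)$ with the ordinary (relative) Weyl group acting on $\theta$ via $\mathrm{Ad}$, so that the hypothesis ``stabilizer of $\theta$ in the Weyl group of the special fiber is trivial'' is literally the hypothesis of Proposition \ref{prop:Lusztig_23} specialized to $T' = T$; this was recorded in Section \ref{sec:Weyl_Bruhat}. Everything else is formal.
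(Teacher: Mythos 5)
Your argument is correct and is exactly the argument of \cite[Corollary 2.4]{Lusztig_04}, to which the paper simply defers for this proof. Both (i) via the three-way inner-product comparison and (ii) via the norm-one criterion, plus the observation that inflation along $P_{\bf x}(\cO_k)\twoheadrightarrow\breve G_r^\sigma$ preserves irreducibility, match the intended proof.
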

\begin{proof}
See \cite[Corollary 2.4]{Lusztig_04}. 
\end{proof}

\subsection{Change of level}\label{sec:level_compat}

One could hope that if $\theta$ is a character of $T(\cO_k) = (\breve{T}^0)^\sigma$ which is trivial on $(\breve T^r)^\sigma$, then the representations $R_{T,U,r}^\theta$ and $R_{T,U,s}^\theta$ for all $s\geq r$ coincide. In \cite[Proposition 7.6]{CI_ADLV}, it is shown that this holds when $G$ is an inner form of $\GL_n(k)$ and $T$ is an elliptic torus. We will show in subsequent work that for general $G$ which split over $\breve k$, this is true when $T$ is elliptic. However this fails for general $T$. In some sense, the more $T$ splits, the bigger is the discrepancy between $R_{T,U,r}^\theta$ and $R_{T,U,r+1}^\theta$. We will explain the failure in an example. 

Assume that $G$ is quasi-split over $k$ and let $T \subseteq G$ be a maximal $k$-rational torus, which contains a $k$-split maximal torus of $G$. Under these assumptions there is a $k$-rational Borel subgroup of $G$ containing $T$. Let $U$ be its unipotent radical. There is a hyperspecial vertex ${\bf x} = {\bf x}_0$ contained in $\sA_{T, \breve k} \cap \mathscr{B}_k$. 
Let $r\geq 1$, and let $\theta$ be a character of $(\breve{T}^0)^\sigma$, which factors through the character (again denoted $\theta$) of $\breve T_r^{\sigma}$. For each $s\geq r$, 
\begin{equation*}
S_{{\bf x}, T,U,s}/\bU_s = (\bG_s/\bU_s)^{\sigma} = \bG_s^\sigma/\bU_s^\sigma
\end{equation*}
is a discrete point set. For a surjection of groups $H \tar K$, let $\Inf_K^H$ denote the inflation functor from virtual $K$-representations to virtual $H$-representations given by pullback. Since $S_{{\bf x}, T, U, s}$ and $S_{{\bf x}, T, U, s}/\bU_r$ have the same cohomology groups up to an even degree shift, we then have
\begin{align*}
R_{{\bf x},T,U,s}^{\theta} &= \Ind_{\breve B_s^\sigma}^{\breve G_s^\sigma} \Inf_{\breve B_r^\sigma}^{\breve B_s^\sigma} \theta, \\
\Inf_{\breve G_r^\sigma}^{\breve G_s^\sigma} R_{{\bf x},T,U,r}^{\theta} &= \Inf_{\breve G_r^\sigma}^{\breve G_s^\sigma} \Ind_{\breve B_r^\sigma}^{\breve G_r^\sigma} \theta = \Ind_{\breve B_s^\sigma \breve G_s^{r,\sigma}}^{\breve G_s^\sigma} \Inf_{\breve B_r^\sigma}^{\breve B_s^\sigma \breve G_s^{r,\sigma}} \theta,
\end{align*}
where the last formula follows from a general commutativity fact for inflation and induction ($\Ind_{HN}^G\Inf_{HN/N}^{HN} \chi = \Inf_{G/N}^G \Ind_{HN/N}^{G/N} \chi$ for an abstract group $G$, a subgroup $H\subseteq G$, a normal subgroup $N \subseteq G$, and a representation $\chi$ of $HN/N$). Thus $R_{{\bf x},T,U,s}^{\theta}$ is bigger than $\Inf_{\breve G_r^\sigma}^{\breve G_s^\sigma} R_{{\bf x},T,U,r}^{\theta}$.



\section{Traces of very regular elements}
Let the notation be as in the beginning of Section \ref{sec:Sigma}. 

\begin{Def}\label{def:veryRegularElements}
We say that $s \in \breve P_{\bf x}$ is \emph{unramified very regular} with respect to $\bf{x}$ if the following conditions hold: 
\begin{itemize}
\item[(i)] $s$ is a regular semisimple element of $G_{\breve k}$, 
\item[(ii)] the connected centralizer $Z^\circ(s)$ of $s$ is a $\breve k$-split maximal torus of $G_{\breve k}$ whose apartment contains ${\bf x}$, and
\item[(iii)] $\alpha(s) \not\equiv 1$ modulo $\fp$ for all roots $\alpha$ of $Z^\circ(s)$ in $G_{\breve k}$.
\end{itemize}
For $r\geq 2$, we say that $s \in \bG_r$ is \emph{unramified very regular}, if $s$ is the image of an unramified very regular element of $\breve P_{\bf x}$.
\end{Def}

Note that condition (ii) implies condition (i). Note that in condition (iii) the character $\alpha \colon Z^\circ(s) \rar \bG_{m,\breve k}$ induces a homomorphism of maximal bounded subgroups: $\alpha \colon \breve Z^\circ(s) \rar \cO^\times$, and hence the condition makes sense. 

\begin{rem}
When $G$ is an inner form of $\GL_n$ and $T$ is the maximal nonsplit unramified torus in $G$, Definition \ref{def:veryRegularElements} says that $x \in (\breve T^0)^\sigma = \cO_L^\times$ (here $\breve k \supseteq L \supseteq k$ is the degree-$n$-subextension) is unramified very regular if and only if the image of $x$ in $(\cO_L/U_L^1) \cong \FF_{q^n}^\times$ has trivial $\Gal(\FF_{q^n}/\FF_q)$-stabilizer. This is \textit{not} equivalent to (though is implied by) the condition that the image of $x$ in $\FF_{q^n}^\times$ is a generator although this last condition is sometimes also associated to the same terminology \cite{Henniart_92, BoyarchenkoW_13, CI_ADLV}. \hfill $\Diamond$
\end{rem}

Note that if $s \in \breve P_{\bf x}$ is unramified very regular, then we may consider the $W_{\bf x}(T)$-homogeneous space $W_{\bf x}(T,Z^\circ(s))$ (see Section \ref{sec:Weyl_Bruhat}).

\begin{theorem}\label{thm:traces}
Let $\theta \from \breve T_r^\sigma \to \overline \QQ_\ell^\times$ be any character and let $s \in \breve P_\bx^\sigma$ be unramified very regular with respect to ${\bf x}$. Then
\begin{equation*}
\Tr(g, R_{T,U,r}^\theta) = \sum_{w \in W_{\bx}(T, Z^\circ(g))^\sigma} (\theta \circ {\rm Ad}(w^{-1}))(g).
\end{equation*}
\end{theorem}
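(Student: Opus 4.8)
The plan is to adapt the proof of \cite[Theorem 4.2]{DeligneL_76}, working with the scheme $S_{T,U}=S_{\bf x,T,U,r}$ in place of a Deligne--Lusztig variety. Abbreviate $\breve G=\breve G_r^\sigma$, $\breve T=\breve T_r^\sigma$, and recall that $\breve G\times\breve T$ acts on $S_{T,U}$ by $(h,t)\cdot x=hxt$, inducing commuting actions on each $H^i_c(S_{T,U},\cool)$; both groups are finite. Since $\breve T$ is abelian and $g$ commutes with the $\breve T$-action, inserting the projector $e_\theta=|\breve T|^{-1}\sum_{t\in\breve T}\theta(t)^{-1}t$ onto the $\theta$-isotypic component gives
\[
\Tr(g,R_{T,U,r}^\theta)=\frac{1}{|\breve T|}\sum_{t\in\breve T}\theta(t)^{-1}\,\mathcal L(g,t),\qquad
\mathcal L(g,t):=\sum_{i\in\bZ}(-1)^i\Tr\!\big((g,t)\mid H^i_c(S_{T,U},\cool)\big).
\]
Thus it suffices to compute each Lefschetz number $\mathcal L(g,t)$.

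\textbf{Reduction via the Deligne--Lusztig fixed-point formula.} Next I would apply \cite[Theorem 3.2]{DeligneL_76} to the finite-order automorphism $(g,t)$ of $S_{T,U}$. Because $g$ is unramified very regular, $g$ lies in the commutative subgroup $\bT_{g,r}\subseteq\bG_r$ attached to the $\breve k$-split torus $Z^\circ(g)$ (constructed as in Section \ref{sec:subgroups_of_bGr}), hence so do its prime-to-$p$ part $g_s$ and $p$-part $g_u$; similarly $t=t_st_u$ in $\bT_r$. Then \cite[Theorem 3.2]{DeligneL_76} gives
\[
\mathcal L(g,t)=\sum_{i\in\bZ}(-1)^i\Tr\!\big((g_u,t_u)\mid H^i_c(S_{T,U}^{(g_s,t_s)},\cool)\big),\qquad
S_{T,U}^{(g_s,t_s)}=\{x\in\bG_r:\ x^{-1}\sigma(x)\in\bU_r,\ x^{-1}g_sx=t_s^{-1}\}.
\]
The crucial input is condition (iii) of Definition \ref{def:veryRegularElements}: $\alpha(g_s)\not\equiv 1\bmod\fp$ for every root $\alpha$ of $Z^\circ(g)$ forces $g_s$ to act without nonzero fixed vectors on every graded piece $\bU_{\alpha,r}^a/\bU_{\alpha,r}^{a+1}$ of $\bG_r^1$, and its image in the reductive quotient $\bG_1$ is regular; together these give $Z_{\bG_r}(g_s)=\bT_{g,r}$. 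Consequently $x^{-1}g_sx=t_s^{-1}$ is unsolvable unless $t_s^{-1}$ is $\bG_r$-conjugate to $g_s$, and when solvable it forces the conjugating element into the transporter $N_{\bG_r}(\bT_r,\bT_{g,r})$, whose $\bT_r$-quotient is the homogeneous space $W_{\bf x}(T,Z^\circ(g))$ of Section \ref{sec:Weyl_Bruhat}. Imposing the Lang condition $x^{-1}\sigma(x)\in\bU_r$ — which, since $\bU_r\cap\bT_r=1$ and $\bU_r$ lies in the identity Bruhat cell, survives only for $\sigma$-stable classes — one identifies $S_{T,U}^{(g_s,t_s)}$ as a disjoint union, over those $w\in W_{\bf x}(T,Z^\circ(g))^\sigma$ with $\mathrm{Ad}(\dot w^{-1})(g_s)=t_s^{-1}$, of pieces of simple shape (affine-space bundles, described via Lemma \ref{lm:Bruhat_cells_decomp}, Lemma \ref{lm:commutator_pairings} and Proposition \ref{prop:commutator_producing_torus_element}); on each piece $(g_u,t_u)$, being unipotent and hence contained in a connected group, acts trivially on cohomology, and on the $w$-piece one has $t^{-1}=\mathrm{Ad}(\dot w^{-1})(g)$, so that $\theta(t)^{-1}=(\theta\circ\mathrm{Ad}(\dot w^{-1}))(g)$ depends only on $w$.

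\textbf{Reassembly and main obstacle.} Feeding this back into the first formula and summing over $t$, the contributions organize, exactly as in \cite[Theorem 4.2]{DeligneL_76}, into $\sum_{w\in W_{\bf x}(T,Z^\circ(g))^\sigma}(\theta\circ\mathrm{Ad}(w^{-1}))(g)$; the normalizing factor $|\breve T|^{-1}$ is absorbed through the structure of $S_{T,U}^{(g_s,t_s)}$ as a cover of $\bU_r$, using the vanishing of compactly supported cohomology of nontrivial Artin--Schreier-type local systems on the unipotent group $\bU_r$, precisely as in the $r=1$ (Deligne--Lusztig) situation. The hard part is the second step: pinning down the fixed-point scheme $S_{T,U}^{(g_s,t_s)}$ and its cohomology in the present generality. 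In the hyperspecial case this is \cite[Theorem 4.2]{DeligneL_76}, but here $\bG_r^1$ is genuinely non-abelian, so one must run the analysis through the refined filtration $\{\bG_r^{a,i}\}$ of Section \ref{sec:filtration_general} and the associated commutator pairings, verifying that the relevant fixed loci are still assembled out of affine spaces and keeping careful track of how the defining condition $x^{-1}\sigma(x)\in\bU_r$ interacts with the root-group stratification of $\bN_r^1$.
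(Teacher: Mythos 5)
Your high-level plan — average over $\breve T$, apply the Deligne--Lusztig fixed-point formula \cite[Theorem 3.2]{DeligneL_76}, then exploit very-regularity to pin down the fixed locus — matches the paper's proof. However, your description of the crucial middle step contains genuine errors, and the lemmas you cite in support (Lemma \ref{lm:Bruhat_cells_decomp}, Lemma \ref{lm:commutator_pairings}, Proposition \ref{prop:commutator_producing_torus_element}) are in fact tools for Theorem \ref{thm:altsum_coh_Sigma} and play no role in the proof of the trace formula. The actual workhorses are Proposition \ref{p:Br}, Proposition \ref{p:gT fixed}, and Lemma \ref{lm:property_of_unram_very_regulars}, none of which you identify.

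The decisive point you miss is that the fixed locus $S_{T,U}^{(s,\zeta)}$ is \emph{zero-dimensional}: by Proposition \ref{p:gT fixed}, it is either $\dot w \breve T_r^\sigma$ (a finite set of points) for a unique $w \in W_\bx(T,Z^\circ(g))^\sigma$ with $\zeta = {\rm Ad}(w^{-1})(s^{-1})$, or it is empty. The mechanism is a Bruhat-type rigidity (Proposition \ref{p:Br}: $s \in x\breve B_r x^{-1}$ forces $x \in \dot w \breve B_r$) combined with the observation (Lemma \ref{lm:property_of_unram_very_regulars}) that the Lang condition $x^{-1}\sigma(x)\in\breve U_r$ together with $x^{-1}sx\in\breve T_r^\sigma$ kills the unipotent component of $x$ entirely, because a very regular torus element has no nontrivial fixed vectors in $\breve U_r$. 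So the "affine-space bundles" you anticipate never appear, there is no Artin--Schreier vanishing to invoke, and the factor $|\breve T_r^\sigma|^{-1}$ is absorbed by the elementary count $\#(\dot w\breve T_r^\sigma)=\#\breve T_r^\sigma$. Consequently the need to route the analysis through the refined filtration $\{\bG_r^{a,i}\}$ of Section \ref{sec:filtration_general}, which you flag as the hard part, simply does not arise here.

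Your treatment of the $p$-part is also incorrect. On the discrete set $\dot w\breve T_r^\sigma$ the pair $(t_1,\tau_1)$ acts as the translation $a\mapsto {\rm Ad}(w^{-1})(t_1)\,a\,\tau_1$, and the trace on $H_c^0$ is the number of fixed points: $\#\breve T_r^\sigma$ if $\tau_1={\rm Ad}(w^{-1})(t_1^{-1})$ and $0$ otherwise. The assertion that a unipotent automorphism "contained in a connected group" acts trivially on cohomology does not apply — there is no connected algebraic group acting here, only a finite permutation of a finite set — and following that reasoning would erroneously yield $\#\breve T_r^\sigma$ unconditionally, losing the constraint on $\tau_1$ that the final reassembly requires.
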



\begin{corollary}
Let $T' \subset G$ be a $k$-rational $\breve k$-split maximal torus whose apartment contains $\bx$. If $T$ and $T'$ are not conjugate by an element of $\breve P_{\bf x}^\sigma$, then for any $s \in T'(k)$ unramified very regular with respect to ${\bf x}$,
\begin{equation*}
\Tr(s, R_{T,U,r}^\theta) = 0.
\end{equation*}
\end{corollary}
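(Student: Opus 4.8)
The plan is to read the statement off Theorem~\ref{thm:traces}. First I would check that its hypotheses hold with $g=s$. Since $s\in T'(k)$ we have $\sigma(s)=s$, and since $s$ is unramified very regular with respect to $\bf x$ it lies in $\breve P_{\bf x}$, so $s\in\breve P_{\bf x}^\sigma$. Moreover $Z^\circ(s)=T'$: the torus $T'_{\breve k}$ is a connected subgroup of the centralizer $Z_G(s)_{\breve k}$, hence contained in $Z^\circ(s)_{\breve k}$, which by condition~(ii) of Definition~\ref{def:veryRegularElements} is itself a maximal torus; comparing dimensions forces $T'_{\breve k}=Z^\circ(s)_{\breve k}$, and this descends to $T'=Z^\circ(s)$ over $k$. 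Theorem~\ref{thm:traces} then gives
\[
\Tr(s,R_{T,U,r}^\theta)=\sum_{w\in W_{\bf x}(T,T')^\sigma}(\theta\circ{\rm Ad}(w^{-1}))(s),
\]
so it suffices to prove that the index set $W_{\bf x}(T,T')^\sigma$ is empty once $T$ and $T'$ are not conjugate by an element of $\breve P_{\bf x}^\sigma$.

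I would prove the contrapositive: if $W_{\bf x}(T,T')^\sigma\neq\varnothing$, then $T$ and $T'$ are $\breve P_{\bf x}^\sigma$-conjugate. Pick $w\in W_{\bf x}(T,T')^\sigma$ and a representative $g\in N_G(T,T')(\breve k)\cap\breve P_{\bf x}^0$. Since $T$, $T'$ and $\breve P_{\bf x}^0$ are all $\sigma$-stable (the last by Lemma~\ref{lm:FrobeniusOnAffineRoots}) and two representatives of the same class differ by $\breve T^0$, the condition $\sigma(w)=w$ says $t\colonequals g^{-1}\sigma(g)\in\breve T^0$. Now I correct $g$ by an element of $\breve T^0$: because $\breve T^0=T(\cO)$ for $T$ the connected N\'eron model (smooth, with connected special fibre, as ${\bf x}\in\sB_k$), the Lang map $\breve T^0\to\breve T^0$, $s'\mapsto s'^{-1}\sigma(s')$, is surjective --- at each Moy--Prasad truncation this is Lang's theorem for the connected smooth abelian $\FF_q$-group $\bT_r$, and the truncations form a projective system of torsors under the finite groups $\bT_r(\FF_q)$, so a compatible solution survives in the inverse limit. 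Choose $s'\in\breve T^0$ with $s'^{-1}\sigma(s')=t^{-1}$; then $h\colonequals gs'\in\breve P_{\bf x}$ satisfies $\sigma(h)=\sigma(g)\sigma(s')=gt\cdot s't^{-1}=gs'=h$ (using that $\breve T^0$ is abelian), and $hTh^{-1}=gs'Ts'^{-1}g^{-1}=gTg^{-1}=T'$ because $s'\in\breve T^0$ centralizes $T$. Thus $h\in\breve P_{\bf x}^\sigma$ conjugates $T$ to $T'$, contradicting the hypothesis; hence $W_{\bf x}(T,T')^\sigma=\varnothing$, the displayed sum is empty, and $\Tr(s,R_{T,U,r}^\theta)=0$.

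There is no serious obstacle here; the only ingredient that is not pure bookkeeping is the lifting in the second paragraph. A priori a class in the $W_{\bf x}(T)$-torsor $W_{\bf x}(T,T')$ can be $\sigma$-invariant without the torsor having any $\sigma$-fixed point, so the real content is that one may promote a $\sigma$-invariant class to a genuine $\sigma$-fixed transporter inside $\breve P_{\bf x}$ --- and that is exactly what the levelwise Lang argument for the pro-(connected smooth) group $\breve T^0$ supplies. I expect the write-up to be short; the points to handle carefully are the side on which $\breve T^0$ acts in the definition of $W_{\bf x}(T,T')$ in Section~\ref{sec:Weyl_Bruhat}, and the compatibility of the $\sigma$-actions under the identification $W_{\bf x}(T,T')=\bT_r(\overline{\FF}_q)\backslash N_{\bG_r}(\bT_r,\bT_r')(\overline{\FF}_q)$, both of which are already implicit there.
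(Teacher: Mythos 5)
Your proof is correct and follows essentially the same route as the paper: both reduce to showing $W_{\bf x}(T,T')^\sigma = \varnothing$ and then promote a $\sigma$-invariant class to an actual $\sigma$-fixed transporter via Lang's theorem at each finite level $\bT_r$ combined with an inverse-limit-of-nonempty-finite-sets argument. The only cosmetic difference is that the paper phrases the promotion as finding an $\FF_q$-point of a $\bT_r$-torsor inside $N_{\bG_r}(\bT_r,\bT_r')$ at each level and then observing that the resulting $n \in \breve P_{\bf x}^\sigma$ conjugates $T(\cO)$ into $T'(\cO)$ (whence $T$ into $T'$ by a centralizer argument), whereas you start from a transporter $g \in N_G(T,T')(\breve k)\cap\breve P_{\bf x}^0$ and correct it by an element of $\breve T^0$ solving the Lang equation there, which avoids the centralizer step; your preliminary verification of the hypotheses of Theorem \ref{thm:traces} and that $Z^\circ(s)=T'$ is left implicit in the paper.
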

\begin{proof} We need to show that for two such tori, $W_{\bx}(T, T')^\sigma = \varnothing$. Suppose there is an element $w \in W_{\bx}(T, T')^\sigma$. Then its preimage in $N_{\bG_r}(\bT_r,\bT_r')$ form a $\FF_q$-rational $\bT_r$-torsor, which by Lang's theorem has a rational point. Doing this for all $r$ and using that the inverse limit of a family of non-empty compact sets is non-empty, we can find an element $n \in \breve P_{\bf x}^\sigma$, which conjugates $T(\cO)$ into $T'(\cO)$. The centralizer of $T(\cO)$ in $G(\breve k)$ is $T(\breve k)$ (and similarly for $T'$), so $n$ also conjugates $T(\breve k)$ into $T'(\breve k)$, and so it conjugates $T$ into $T'$, which contradicts the assumption. 
\end{proof}

Let $B$ denote the Borel subgroup of $G$ whose unipotent radical is the fixed subgroup $U$, and let $\bB_r$ be the corresponding subgroup of $\bG_r$. The following result shows that $\bB_r$ behaves in certain aspects like a Borel subgroup of $\bG_r$ (although it is not a Borel subgroup if $r \geq 2$). Similar results in the case that $P_{\bf x}$ is reductive are shown in \cite{Stasinski_12}.

\begin{proposition}\label{p:Br}
Let $s \in \breve G_r$ be an unramified very regular element. If $x \in \breve G_r$ is such that $s \in x \breve B_r x^{-1}$, then there exists a unique $w \in W_{\bx}(T, Z^0(s))$ such that for any lift $\dot w \in \breve G_r$, we have $x \in \dot w \breve B_r$.
\end{proposition}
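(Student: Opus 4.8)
The plan is to descend to the reductive quotient $\bG_1$ of $P_{\bf x}$, where the assertion becomes the classical fact that the Borel subgroups of $\bG_1$ containing a regular semisimple element form a principal homogeneous space under the associated Weyl group, and then to lift this back to $\bG_r$ by a d\'evissage along the pro-unipotent part. Write $T' \colonequals Z^\circ(s)$ and let $\bar s \in \bG_1$ be the image of $s$. First I would record the consequences of very regularity: since $s$ is semisimple and $Z(s)$ has $T'$ as its unique maximal torus, $s$ lies in $T'(\breve k)$, hence in $\breve T^{\prime 0}$, so $\bar s$ lies in the maximal torus $\bT_1'$ of $\bG_1$ --- maximal because ${\bf x}$ lies in the apartment of $T'$ by Definition \ref{def:veryRegularElements}(ii). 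Moreover condition (iii) of that definition gives $\alpha(\bar s) \neq 1$ for every root $\alpha$ of $\bT_1'$ in $\bG_1$, so $\bar s$ is regular semisimple with $Z^\circ_{\bG_1}(\bar s) = \bT_1'$.

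Second, the statement over $\bG_1$. Projecting $s \in x\breve B_r x^{-1}$ gives $\bar s \in \bar x\breve B_1\bar x^{-1}$. As a semisimple element of this Borel, $\bar s$ lies in a maximal torus of it, which, being connected of full rank, is contained in $Z^\circ_{\bG_1}(\bar s) = \bT_1'$, hence equals it; thus $\bT_1' \subseteq \bar x\breve B_1\bar x^{-1}$, i.e.\ $\bar x^{-1}\bT_1'\bar x$ is a maximal torus of $\breve B_1$. Since $\bU_1$ acts simply transitively on the maximal tori of $\bB_1$, there is a unique $\bar u \in \bU_1(\overline{\FF}_q)$ with $\bar u\,(\bar x^{-1}\bT_1'\bar x)\,\bar u^{-1} = \bT_1$, and then $n \colonequals \bar x\bar u^{-1} \in N_{\bG_1}(\bT_1,\bT_1')(\overline{\FF}_q)$ with $\bar x \in n\breve B_1$; if $\bar x \in n'\breve B_1$ for another representative $n'$, then $(n')^{-1}n \in N_{\bG_1}(\bT_1)\cap\breve B_1 = \bT_1$, so the class $w \in W_{\bf x}(T,T')$ of $n$ is the unique one with $\bar x \in \dot w\breve B_1$ for every lift $\dot w$ (the coset being lift-independent because lifts differ by $\bT_1 \subseteq \breve B_1$). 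Using the identification of $W_{\bf x}(T,T')$ in $\bG_r$ with the one in $\bG_1$ (Section \ref{sec:Weyl_Bruhat}), I lift $n$ to a representative in $N_{\bG_r}(\bT_r,\bT_r')$ and, replacing $x$ by $n^{-1}x$, reduce to the case $\bar x \in \breve B_1$, where it suffices to show that the new $x$ lies in $\breve B_r$. Note that $s_0 \colonequals n^{-1}sn$ now lies in $\breve T_r$ (as $n^{-1}\bT_r'n = \bT_r$) and, by Definition \ref{def:veryRegularElements}(iii) applied to the unramified very regular element $s_0$, for which $Z^\circ(s_0) = T$, one has $\alpha(\bar s_0) \neq 1$ for all $\alpha \in \Phi$.

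Third, the d\'evissage, which is the crux and the place where the non-reductivity of $P_{\bf x}$ enters. From $\bar x \in \breve B_1$ we get $x \in \breve B_r\breve G_r^1$, so $x\breve B_r$ lies in $\{g\breve B_r : \bar g \in \breve B_1\}$, a set naturally in bijection with $\breve G_r^1/\breve B_r^1$ (with $\breve B_r^1 = \breve B_r \cap \breve G_r^1$), under which the base coset $\breve B_r$ corresponds to the identity and left translation by $s_0$ --- which fixes $x\breve B_r$ since $x^{-1}s_0 x \in \breve B_r$ --- becomes conjugation by $s_0$. So it is enough to show that the identity coset is the only $s_0$-fixed element of $\breve G_r^1/\breve B_r^1$. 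I would filter $\breve G_r^1/\breve B_r^1$ by the images of $\breve G_r^a$ for $a \geq 2$ and, at the bottom step $a=1$, by the images of the finer subgroups $\breve G_r^{1,i}$ of Section \ref{sec:filtration_general}. By Lemma \ref{lm:g_filtration_abelian} the successive quotients are abelian, and by Lemmas \ref{lm:GGr_generated_by_roots} and \ref{lm:structure_of_rootgroup_subquotients} each is the product of the one-dimensional root subquotients $\breve U_{\alpha,r}^a/\breve U_{\alpha,r}^{a+1}$ (resp.\ $\breve U_{\alpha,r}^1/\breve U_{\alpha,r}^2$) over $\alpha \in \Phi^-$ (resp.\ $\alpha \in \Phi_i^-$), the torus and positive-root directions having been divided out. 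On the $\alpha$-subquotient $s_0 \in \breve T_r$ acts, by Lemma \ref{lm:usual_commutator_relations}(i) and the Chevalley parametrization, as multiplication by $\alpha(\bar s_0) \in \overline{\FF}_q^\times$, which is $\neq 1$ by the previous paragraph, so this action has no nonzero fixed vector. Stepping down the filtration, an $s_0$-fixed coset must therefore lie in each successive smaller layer, so it is the identity coset, i.e.\ $x \in \breve B_r$. This gives existence of $w$; uniqueness is immediate, since if $x \in \dot w\breve B_r$ and $x \in \dot w'\breve B_r$ then $\bar x \in \dot w\breve B_1 \cap \dot w'\breve B_1$, forcing $w = w'$ by the $\bG_1$-uniqueness above.

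I expect the d\'evissage to be the main obstacle. Two points need care: invoking the refinement $\{\bG_r^{1,i}\}$ to bypass the failure of $\bG_2^1$ to be abelian, so that the graded pieces really are vector groups on which $s_0$ acts linearly; and verifying --- through the commutator relations of Section \ref{sec:Preliminaries} --- that the induced action of $s_0$ on each such piece is exactly scalar multiplication by the corresponding $\alpha(\bar s_0)$. It is this, combined with condition (iii) of Definition \ref{def:veryRegularElements}, that forces the fixed coset to be unique.
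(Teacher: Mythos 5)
Your proof is correct in its overall strategy, but it takes a genuinely different route from the paper's for the main inductive step ($r \geq 2$). The reduction to $Z^\circ(s) = T$ and the $r=1$ case are essentially the same content (the paper cites \cite[Proposition 4.4(ii)]{DeligneL_76}; you argue directly via simple transitivity of $\bU_1$ on maximal tori of $\bB_1$ — equivalent and standard). The divergence is in how one gets from $\bG_1$ to $\bG_r$. The paper inducts on $r$: having shown $x \in \dot w\breve B_r\breve G_r^{r-1}$ by the inductive hypothesis, it invokes the Iwahori decomposition of $\breve G_r^{r-1}$ from \cite[Theorem 4.2]{MoyP_96} to write $x = \dot w h b$ with $h \in \breve U_r^{-,r-1}$, and then Lemma \ref{lm:property_of_unram_very_regulars} kills $h$. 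You instead do a single-pass d\'evissage on the coset space $\breve G_r^1/\breve B_r^1$, filtering by the images of the Moy--Prasad filtration together with the finer $\{\bG_r^{1,i}\}$ refinement of Section \ref{sec:filtration_general}, and argue that conjugation by $s_0$ has no nontrivial fixed points in each graded piece. What the paper's approach buys is efficiency: all the heavy lifting is delegated to one citation (Moy--Prasad Iwahori decomposition at a single level) and one self-contained lemma. What your approach buys is an interesting reuse of the filtration machinery built in Section \ref{sec:filtration_general} for the cohomology computation, and it avoids having to invoke Moy--Prasad explicitly — at the cost of a longer and more careful verification of the graded-piece structure.

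One point you should tighten: the claim that each graded piece of your filtration on $\breve G_r^1/\breve B_r^1$ is the product of the negative root subquotients $\breve U_{\alpha,r}^?/\breve U_{\alpha,r}^{?+1}$ does not follow from Lemmas \ref{lm:GGr_generated_by_roots} and \ref{lm:structure_of_rootgroup_subquotients} alone; those give generation, not a direct-product decomposition. To make the induction go through, you need to know that the negative root contributions inject into $V/V^+$, where $V = \bG_r^{a,i}/\bG_r^{a,i+1}$ and $V^+$ is the image of $\bG_r^{a,i}\cap \breve B_r^1$. This is precisely the Iwahori-decomposition content that the paper cites, so your argument has not escaped it so much as encoded it at the graded level. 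One clean way to supply it is to observe that the abelian group $V$ carries a rational action of $\bT_1$ and therefore decomposes into weight spaces, with $V^+$ the sum of the non-negative-weight spaces; this gives the product structure and makes the action of $\bar s_0$ on $V/V^+$ visibly diagonal with eigenvalues $\alpha(\bar s_0) \neq 1$. With that clarification, your argument is sound.
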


\begin{proof}
The maximal $\breve k$-split tori $T$ and $Z^\circ(s)$ are conjugate by an element $y \in \breve P_{\bf x}$, as $\bf x$ is contained in the intersection of their apartments. Conjugating by $y$ we thus may reduce to the special case that $Z^\circ(s) = T$. 

We first prove the assertion in the case $r = 1$. The image of $s$ in the reductive group $\breve G_1$ is regular semisimple and $\bB_1 \subseteq \bG_1$ is a Borel subgroup. By \cite[Proposition 4.4(ii)]{DeligneL_76}, we see that there is an element $\dot{w} \in \breve G_1$ normalizing $T$, and satisfying $x \bB_1 x^{-1} = \dot{w}^{-1} \bB_1 \dot{w}$. By the normalizer theorem, $\dot{w}^{-1}x \in \breve B_1$, and we are done.

We now prove the assertion for $r \geq 2$. By the above, we see that there exists a unique $w \in W_\bx(T)$ such that $x \in \dot w \breve B_r \breve G_r^1$. We proceed by induction; to this end, it suffices to prove that if $x \in \dot w \breve B_r \breve G_r^{r-1}$, then $x \in \dot w \breve B_r$.

Since $\bG_r^{r-1}$ is normal in $\bG_r$, we may write $x = \dot w h b$ for some $h \in \breve G_r^{r-1}$ and $b \in \breve B_r$. By \cite[Theorem 4.2]{MoyP_96}, $\breve G_r^{r-1}$ has an Iwahori decomposition, so we may write $h = h_- h_+$ with $h_- \in \breve U_r^{-,r-1}$ and $h_+ \in \breve B_r^{r-1}$. Replacing $b$ by $h_+ b$ and $h$ by $h_-$, we now have $h \in \breve U_r^{-,r-1}$. Since $x^{-1} s x \in \breve B_r$ by assumption, we have $h^{-1}{\rm Ad}(w^{-1})(s)h \in \breve B_r$. Writing $t$ for the very regular element ${\rm Ad}(w^{-1})(s) \in \breve T_r$, we deduce $h^{-1}(tht^{-1})t \in \breve B_r$, and thus $h^{-1}(tht^{-1}) \in \breve B_r$. Since $h \in \breve U_r^{-, r-1}$ by construction, $h^{-1}(t h t^{-1}) \in \breve B_r$ only if $h = t h t^{-1}$, which holds only when $h = 1$ by Lemma \ref{lm:property_of_unram_very_regulars}.
\end{proof}

\begin{lm}\label{lm:property_of_unram_very_regulars}
Let $r\geq 2$ and let $t \in \breve T_r \subset \breve G_r$ be unramified very regular. If $tht^{-1} = h$ for some $h \in \breve U_r$, then $h = 1$. 
\end{lm}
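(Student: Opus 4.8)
The plan is to prove the sharper statement that conjugation by $t$ on $\breve U_r$ has no nontrivial fixed point, of which the lemma is a special case. Fix a lift $s \in \breve T^0$ of $t$; unwinding Definition \ref{def:veryRegularElements}(iii) we have $\alpha(s) - 1 \in \cO^\times$ for every root $\alpha \in \Phi$, and for each such $\alpha$ conjugation by $s$ carries $u_\alpha(y)$ to $u_\alpha(\alpha(s)y)$, so that $t$ acts on each root‑group subquotient of $\breve U_r$ through multiplication by $\alpha(s)$. The strategy is to exhibit a finite chain of $\breve T_r$‑stable subgroups $\breve U_r = W_0 \supseteq W_1 \supseteq \dots \supseteq W_N = \{1\}$, each $W_{j+1}$ normal in $W_j$, such that $t$ acts without nontrivial fixed point on every quotient $W_j/W_{j+1}$. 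A downward induction then finishes: if $h$ is $t$‑fixed and lies in $W_j$, its image in $W_j/W_{j+1}$ is $t$‑fixed, hence trivial, so $h \in W_{j+1}$; thus $h \in W_N = \{1\}$.

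I would assemble the chain from three filtrations already present in the paper, all of whose graded pieces turn out to be direct sums of root‑group subquotients $\bU_{\alpha,\bullet}$ on which $t$ acts by the image of $\alpha(s)$ in $\overline\FF_q^\times$. On top, $\breve U_r/\breve U_r^1 \cong \breve U_1$, the $\overline\FF_q$‑points of the unipotent radical $\bU_1 \subseteq \bG_1$ of a Borel subgroup of the reductive group $\bG_1$; I refine it by the subgroups $\bU_1^{(j)}$ generated by the $\bU_{\alpha,1}$ with $\height(\alpha)\ge j$, whose successive quotients are $\bigoplus_{\height(\alpha)=j}\bU_{\alpha,1}$. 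Between $\breve U_r^1 = \breve U_r \cap \breve G_r^1$ and $\breve U_r^2 = \breve U_r \cap \breve G_r^2$, I refine by the groups $\breve U_r \cap \breve G_r^{1,i}$, where $\{\bG_r^{1,i}\}$ is the filtration of $\bG_r^1$ from Section \ref{sec:filtration_general}: by Lemma \ref{lm:g_filtration_abelian}(ii) the quotients are abelian, and since $\bU_r \cap \bT_r = \{1\}$ each is the direct product of the one‑dimensional quotients $\bU_{\alpha,2}^1$ over $\alpha \in \Phi_i^+$, with no torus contribution. Finally, for $2\le a\le r-1$ the quotient $\breve U_r^a/\breve U_r^{a+1}$ is abelian by Lemma \ref{lm:g_filtration_abelian}(i) and is the direct product of the $\breve U_{\alpha,r}^a/\breve U_{\alpha,r}^{a+1}$, $\alpha\in\Phi^+$, each one‑dimensional.

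In each graded piece $t$ acts coordinate‑wise, and on the $\alpha$‑coordinate by multiplication by the image $\overline{\alpha(s)}\in\overline\FF_q^\times$, which is $\neq 1$ by hypothesis; hence $t-1$ is invertible on every graded piece, so it has no nontrivial fixed point, and the downward induction above gives $h=1$, as desired.

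The part I expect to require actual care — and the reason the statement is not formal — is the layer $\breve U_r^1$: the whole point of the paper is that $\bG_2^1$, and hence $\bU_2^1$, need not be abelian, so one cannot simply split an element of $\bU_2^1$ into root components and read off its $t$‑eigenvalues. The filtration $\{\bG_r^{1,i}\}$ of Section \ref{sec:filtration_general} is precisely what reduces this layer to abelian graded pieces that still decompose along roots; the remaining verifications are routine bookkeeping — that $\breve U_r \cap \breve G_r^{1,i}$ is exactly the product of the root‑group quotients for those $\alpha\in\Phi^+$ that are reductive or satisfy $\varepsilon_\alpha\ge\varepsilon_i$ (using $\bU_r\cap\bT_r=\{1\}$ and that $\bG_r^{1,i}$ is generated by $\bT_2^1$ and root subgroups), and that the successive $W_j$ are normal in one another.
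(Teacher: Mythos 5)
Your plan is logically sound, and the descending induction you set up does prove the lemma; but the filtration machinery is unnecessary, and the concern that motivates it is not actually an obstacle. The worry in your last paragraph — that non-abelianness of $\bU_2^1$ prevents one from ``simply splitting an element of $\bU_2^1$ into root components and reading off its $t$-eigenvalues'' — is unfounded. The product decomposition
\[
\bU_r \;\cong\; \prod_{\alpha\in\Phi^+}\bU_{\alpha,r}
\]
(as a scheme, for any fixed ordering of $\Phi^+$) holds regardless of whether $\bU_r$ is abelian; it is part of the Bruhat--Tits/Moy--Prasad structure theory (cf.\ \cite[Theorem 8.3]{Yu_02}, which is exactly what Lemma \ref{lm:GGr_generated_by_roots} draws on), and commutativity of the multiplication plays no role in the uniqueness of such a factorization. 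Since conjugation by $t$ preserves each root subgroup $\bU_{\alpha,r}$, it carries a factorization $h=\prod_\alpha\psi_\alpha(h_\alpha)$ to $\prod_\alpha\psi_\alpha(\alpha(t)h_\alpha)$, and uniqueness then forces $h_\alpha=\alpha(t)h_\alpha$ for every $\alpha$, i.e.\ $(\alpha(t)-1)h_\alpha=0$ inside the $\cO$-module subquotient $\fp^{r_1}/\fp^{r_2}$. Since $\alpha(t)-1\in\cO^\times$ by Definition~\ref{def:veryRegularElements}(iii), each $h_\alpha$ vanishes. That one-step argument is the paper's proof. Your three-layer filtration and coordinate-wise eigenvalue analysis amount to re-deriving the uniqueness of the product decomposition by induction on a normal series; the final eigenvalue computation is the same. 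What your route buys is a self-contained proof that never invokes the global product decomposition, at the cost of the extra bookkeeping you flag (checking that each $\breve U_r\cap\breve G_r^{1,i}$ really is the expected product of root-group subquotients, and that the chain consists of $\breve T_r$-stable normal inclusions). One small imprecision worth tightening in your write-up: when you say $t$ acts on a graded piece ``by multiplication by $\overline{\alpha(s)}\in\overline\FF_q^\times$'', this is literally true only for the length-one subquotients $\fp^m/\fp^{m+1}$; the cleaner formulation is that $t$ acts on $\fp^{r_1}/\fp^{r_2}$ by $\alpha(s)\in\cO^\times$ and that $\alpha(s)-1\in\cO^\times$ by very regularity, which is invertibility in the right generality. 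With that adjustment your proof goes through; it is just longer than it needs to be.
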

\begin{proof}
Fixing an order on the roots $\Phi^+ = \Phi(T,U)$, we may write $h$ \emph{uniquely} as $\prod_{\alpha\in \Phi^+}\psi_{\alpha}(h_{\alpha})$, where $\psi_{\alpha}$ is an isomorphism of $\bU_{\alpha,r}$ with a framing object coming from the Chevalley system. Then $\prod_{\alpha\in \Phi^+}\psi_{\alpha}(h_{\alpha}) = h = \zeta^{-1}h\zeta = \prod_{\alpha\in \Phi^+}\psi_{\alpha}(\alpha(\zeta^{-1}) h_{\alpha})$, and hence (by uniqueness of the presentation as a product) $h_{\alpha} = \alpha(\zeta^{-1}) h_{\alpha}$. We have naturally $h_\alpha \in \fp^{r_{1,\alpha}}/\fp^{r_{2,\alpha}}$ for appropriate $r_{1,\alpha} \leq r_{2,\alpha} \in \bZ$. As $\zeta^{-1}$ is very regular, $\alpha(\zeta^{-1}) \not\equiv 1 \mod \fp$, and hence the above equality forces $h_{\alpha} = 0$ for all $\alpha$. Thus $h = 1$.
\end{proof}

By Proposition \ref{p:Br},
\begin{equation}
\label{e:gT fixed comp}
S_{T,U}^{(g, \bT)} 
\colonequals \{x \in \breve G_r : \text{$x^{-1} \sigma(x) \in \breve U_r$ and $g x \in x \breve T_r^\sigma$}\}
= \bigsqcup_{w \in W_\bx(T, Z^0(g))} S_{T,U}^{(g, \bT)}(w),
\end{equation}
where
\[
S_{T,U}^{(g, \bT)}(w) \colonequals \{x \in \ddot w \breve B_r : \text{$x^{-1} \sigma(x) \in \breve U_r$ and $g x \in x \breve T_r^\sigma$}\},
\]
for some (any) lift $\ddot{w} \in \bG_r$ of $w$. For any $k$-rational $\breve k$-split maximal torus $T' \subset G$ whose apartment contains ${\bf x}$, the preimage of any $w \in W_{\bf x}(T, T')^\sigma$ in $\bG_r$ is an $\FF_q$-rational $\bT_r$-torsor, so by Lang's theorem, it contains a $\FF_q$-rational point $\dot w$. For any $w \in W_{\bf}(T, T')^\sigma$ we fix such a $\dot w$. 

\begin{proposition}\label{p:gT fixed}
Let $g \in \breve G_r^\sigma$ be a very regular element. Then
\begin{equation*}
S_{T,U}^{(g, \bT)}(w) = \begin{cases} \dot w \breve T_r^\sigma & \text{if $w \in W_\bx(T, Z^0(g))^\sigma$,} \\ \varnothing & \text{otherwise}. \end{cases}
\end{equation*}
\end{proposition}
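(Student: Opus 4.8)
The plan is to examine an arbitrary $\overline\FF_q$-point $x$ of $S_{T,U}^{(g,\bT)}(w)$ by decomposing it along $\dot w\breve B_r$. Write $T':=Z^\circ(g)$; by Definition~\ref{def:veryRegularElements}(ii) this is a $\breve k$-split maximal torus whose apartment contains $\bx$, and since $g$ comes from a regular semisimple element of $\breve P_\bx$, its image in $\bG_r$ lies in $\breve T_r'$. I will use freely that $\breve B_r=\breve T_r\ltimes\breve U_r$ (in particular $\breve T_r\cap\breve U_r=1$), that $\bT_r$ is commutative, that $\dot w^{-1}\breve T_r'\dot w=\breve T_r$ and $\dot w^{-1}T'\dot w=T$ for any lift $\dot w$ of $w$, and that two lifts of $w$ differ by right multiplication by an element of $\breve T_r\subseteq\breve B_r$, so that $\dot w\breve B_r$, and hence $S_{T,U}^{(g,\bT)}(w)$, is independent of the chosen lift.

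For the inclusion $\dot w\breve T_r^\sigma\subseteq S_{T,U}^{(g,\bT)}(w)$ when $w\in W_\bx(T,T')^\sigma$ (so $\dot w$ is the distinguished $\FF_q$-rational lift): given $t\in\breve T_r^\sigma$ put $x:=\dot w t\in\dot w\breve B_r$. Using $\sigma(\dot w)=\dot w$, $\sigma(t)=t$ and commutativity of $\bT_r$, a direct computation gives $x^{-1}\sigma(x)=1\in\breve U_r$ and $x^{-1}gx=\dot w^{-1}g\dot w=:t_0\in\breve T_r$ with $\sigma(t_0)=t_0$, so $x\in S_{T,U}^{(g,\bT)}(w)$.

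For the reverse inclusion and the vanishing, let $x\in S_{T,U}^{(g,\bT)}(w)$ be arbitrary and write $x=\dot w\tau u$ with $\tau\in\breve T_r$, $u\in\breve U_r$. With $t_0:=\dot w^{-1}g\dot w\in\breve T_r$ and $\bT_r$ commutative we get $x^{-1}gx=u^{-1}t_0u=t_0\cdot(t_0^{-1}u^{-1}t_0u)$, the last factor lying in $\breve U_r$; since $x^{-1}gx\in\breve T_r$ and $\breve T_r\cap\breve U_r=1$, this forces $t_0$ to commute with $u$. Now one checks that $t_0$ is again unramified very regular: its connected centralizer is $\dot w^{-1}T'\dot w=T$, whose apartment contains $\bx$, and for each root $\alpha$ of $T$ one has $\alpha(t_0)=\alpha'(g)$ for the corresponding (transported) root $\alpha'$ of $T'$, hence $\not\equiv1\bmod\fp$. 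So Lemma~\ref{lm:property_of_unram_very_regulars} (for $r\ge2$; for $r=1$ one instead uses that the centralizer in $\bG_1$ of a regular semisimple element of $\breve T_1$ meets $\breve U_1$ trivially) gives $u=1$, so $x=\dot w\tau$ and $x^{-1}gx=t_0$. I would then apply $\sigma$ to this last identity: setting $v_0:=x^{-1}\sigma(x)\in\breve U_r$ one obtains $\sigma(t_0)=v_0^{-1}t_0v_0$, and since $\sigma(t_0)\in\breve T_r$ the same argument (very-regularity of $t_0$ plus Lemma~\ref{lm:property_of_unram_very_regulars}) yields $v_0=1$; thus $x=\sigma(x)$. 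Finally $\dot w^{-1}\sigma(\dot w)=\tau\sigma(\tau)^{-1}\in\breve T_r$, so $\dot w$ and $\sigma(\dot w)$ lie in the same right $\breve T_r$-coset, whence $\sigma(w)=w$. Consequently $S_{T,U}^{(g,\bT)}(w)=\varnothing$ unless $w\in W_\bx(T,T')^\sigma$; and when $w$ is $\sigma$-fixed the lift $\dot w$ is $\FF_q$-rational, so $\sigma(\dot w)=\dot w$ forces $\sigma(\tau)=\tau$ and $x=\dot w\tau\in\dot w\breve T_r^\sigma$, which together with the previous paragraph gives $S_{T,U}^{(g,\bT)}(w)=\dot w\breve T_r^\sigma$.

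The step I expect to be the crux is the transfer of unramified very-regularity from $g$ to its $\dot w$-conjugate $t_0$ — verifying all three conditions of Definition~\ref{def:veryRegularElements} for $t_0$ — since this is exactly what licenses the two applications of Lemma~\ref{lm:property_of_unram_very_regulars}: one to remove the $\breve U_r$-component $u$ of $x$, and a second, after the $\sigma$-twist, to remove $v_0=x^{-1}\sigma(x)$ and force $x$ to be $\FF_q$-rational. Everything else reduces to routine manipulations with the decomposition $\breve B_r=\breve T_r\ltimes\breve U_r$, the commutativity of $\bT_r$, and the torsor structure of $W_\bx(T,T')$ under $W_\bx(T)$.
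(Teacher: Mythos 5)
Your proof is correct, and it takes a genuinely different route from the paper's. Both proofs start by decomposing an arbitrary $x \in S_{T,U}^{(g,\bT)}(w)$ along $\ddot w \breve B_r = \ddot w\breve T_r\breve U_r$, but they differ in the order in which they establish the two rigidity conclusions. The paper first deduces $\sigma(w) = w$ from $x^{-1}\sigma(x) \in \breve U_r$ alone: writing $x = \ddot w v$ with $v \in \breve U_r$ forces $\ddot w^{-1}\sigma(\ddot w) \in N_{\bG_r}(\bT_r)\cap\breve U_r$, and projecting to the reductive quotient $\bG_1$ and using $N_{\bG_1}(\bT_1)\cap\bU_1 = 1$ (phrased in the paper via semisimplicity) gives $\sigma$-fixedness of $w$; only afterwards does the paper invoke Lemma~\ref{lm:property_of_unram_very_regulars} once, to kill the $\breve U_r$-part $v$. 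You instead invoke Lemma~\ref{lm:property_of_unram_very_regulars} twice: first, with $g$ transported to $t_0 = \dot w^{-1}g\dot w$, to kill the $\breve U_r$-component $u$ of $x$ (so that $x^{-1}gx = t_0$), and then, after applying $\sigma$ to the identity $x^{-1}gx = t_0$, to kill $v_0 = x^{-1}\sigma(x)$; the $\sigma$-fixedness of $w$ then drops out from $\dot w^{-1}\sigma(\dot w)\in\breve T_r$. Your version is more uniform, in that both uses of rigidity are the same lemma and you avoid the separate $\bG_1$-level argument, at the cost of having to verify that unramified very-regularity transports along $\dot w$. That verification is fine: since $W_\bx(T,T')$ identifies $\bT_r\backslash N_{\bG_r}(\bT_r,\bT_r')(\overline\FF_q)$ with $\breve T^0\backslash(N_G(T,T')(\breve k)\cap\breve P_\bx^0)$, one can lift $\dot w$ to an element $\tilde{\dot w}\in N_G(T,T')(\breve k)\cap\breve P_\bx$, and then $\tilde{\dot w}^{-1}\tilde g\tilde{\dot w}$ is an unramified very regular preimage of $t_0$ with connected centralizer $T$ (conditions (i)--(iii) of Definition~\ref{def:veryRegularElements} are manifestly preserved under this conjugation). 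Your observation that the $r=1$ case needs the classical statement in $\bG_1$ rather than Lemma~\ref{lm:property_of_unram_very_regulars} (which is stated for $r\geq 2$) is also apt.
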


\enlargethispage*{\baselineskip}
\begin{proof}
Let $w \in W_\bx(T, Z^0(g))$ and let $\ddot w$ be any lift of $w$ to $\breve G_r$. Assume that $S_{T,U}^{(g, \bT)}(w) \neq \varnothing$ and let $x \in S_{T,U}^{(g, \bT)}(w)$. Then $\ddot w^{-1}x \in \breve B_r$. After modifying the lift $\ddot w$, we may write $x = \ddot w v$ with $v \in \breve U_r$. As $(\ddot w v)^{-1}\sigma(\ddot w v) = x^{-1}\sigma(x) \in \breve U_r$, we deduce that $\ddot w^{-1}\sigma(\ddot w) \in \breve U_r$. Projecting to $\bG_1$, we see that $\overline{\ddot w}^{-1}\sigma(\overline{\ddot w}) \in \breve U_1$. The left hand side is a semisimple element in $\bG_1$, and so we deduce that $w \in W_{\bf x}(T)^\sigma$. 

We may now choose $\ddot w$ to be the $\sigma$-stable lift $\dot w$. Then we can write $x = \dot w a v$ where $a \in \breve T_r$ and $v \in \breve U_r$. Then $x^{-1} \sigma(x) = v^{-1} a^{-1} \dot w^{-1} \sigma(\dot w) \sigma(a) \sigma(v) = v^{-1} a^{-1} \sigma(a) \sigma(v) \in \breve U_r$ and hence we must have $a \in \breve T_r^\sigma$. Furthermore, by assumption we have $x^{-1} g x = v^{-1} a^{-1} \dot w^{-1} g \dot w a v = v_1^{-1} \dot w^{-1} g \dot w v_1 \in \breve T_r^\sigma$, where $v_1 \colonequals ava^{-1} \in \breve U_r$. The element $t \colonequals \dot w^{-1} g \dot w \in \breve T_r^\sigma$ is very regular and now $v^{-1} (t v t^{-1}) \in \breve T_r^\sigma$. Hence necessarily $v = t v t^{-1}$, which forces $v = 1$ by Lemma \ref{lm:property_of_unram_very_regulars}.
\end{proof}

\begin{proof}[Proof of Theorem \ref{thm:traces}] 

For any $\breve k$-split maximal torus $T' \subset G$, we have a short exact sequence
\[
1 \rar (\breve T_r'{}^1)^\sigma \rar \breve T_r'{}^\sigma \rar \breve T_1'{}^\sigma \rar 1
\]
of finite abelian groups with $(\breve T_r'{}^1)^\sigma$ of $p$-power order and $\breve T_1'{}^\sigma$ of order prime to $p$. (The surjectivity on the right holds as $\breve T_1'{}^\sigma \rar H^1(\Gal(\overline \FF_q/\FF_q), \breve T_r'{}^1)$ must be the zero morphism, as the latter is a $p$-group). This sequence is split.  
    
Applying the above to $T' = Z^0(g)$, we may write $g = s t_1$ where $t_1 \in (\breve T_r'{}^1)^\sigma$ has $p$-power order and $s$ is in the image of the splitting and hence of order prime to $p$. It is easy to see that $t_1$ and $s$ are both powers of $g$. Note that $s$ is still very regular and $Z^0(s) = Z^0(g)$. Analogously, applying the above to $T' = T$, for any $\tau \in \breve T_r^\sigma$, we may write $\tau = \zeta \tau_1$ with $\tau_1 \in (\breve T_r^1)^\sigma$, and $\zeta$ in the image of the splitting. Thus $(g, \tau) \in \breve G_r^\sigma \times \breve T_r^\sigma$ has the decomposition
\begin{equation*}
(g, \tau) = (s, \zeta) \cdot (t_1, \tau_1),
\end{equation*}
where $(s,\zeta)$ and $(t_1, \tau_1)$ are both powers of $(g,\tau)$ such that $(s,\zeta)$ has prime-to-$p$ order and $(t_1,\tau_1)$ has $p$-power order. Averaging over $\breve T_r^\sigma$ and applying the Deligne--Lusztig trace formula \cite[Theorem 3.2]{DeligneL_76} (which we may do by Lemma \ref{lm:STU_prop}), we deduce
\begin{align}
\nonumber\Tr(g, R_{T,U,r}^\theta) &= \frac{1}{\# \breve T_r^\sigma} \sum_{\tau \in \breve T_r^\sigma} \theta(\tau)^{-1} \Tr\left((g,\tau)^\ast; \textstyle\sum\limits_i(-1)^i H_c^i(S_{T,U}, \cool)\right) \\
\label{eq:comp_traces_DL_formula} &= \frac{1}{\# \breve T_r^\sigma} \sum_{\tau \in \breve T_r^\sigma} \theta(\tau)^{-1} \Tr\left((t_1,\tau_1)^\ast; \textstyle\sum\limits_i (-1)^i H_c^i(S_{T,U}^{(s,\zeta)}, \cool)\right), 
\end{align}
where $S_{T,U}^{(s,\zeta)} \colonequals \{x \in \bG_r \colon x^{-1}\sigma(x) \in \bU_r, sx\zeta = x\}$ is the set of fixed points of $S_{T,U}$ under $(s,\zeta)$.

We obviously have $S_{T,U}^{(s,\zeta)} \subseteq S_{T,U}^{(g, \bT)}$, and it now follows easily from Proposition \ref{p:gT fixed} that 
\begin{equation*}
S_{T,U}^{(s, \zeta)} = 
\begin{cases}
\dot w \bT_r^\sigma & \text{if $\zeta = {\rm Ad}(w^{-1})(s^{-1})$ for some (unique) $w \in W_{\bx}(T, Z^0(g))^\sigma$}, \\
\varnothing & \text{otherwise.}
\end{cases}
\end{equation*}
Now $(t_1,\tau_1)$ acts on a point $\dot w a \in \dot w \bT_r^\sigma$ by $(t_1,\tau_1) \colon \dot w a \mapsto t_1 \dot w a \tau_1 = \dot w {\rm Ad}(w^{-1})(t_1) a \tau_1$, and thus 
\begin{align*}
\Tr\left((t_1, \tau_1)^* ; \textstyle \sum\limits_i (-1)^i H_c^i(S_{T,U}^{(s, \zeta)}, \overline \QQ_\ell)\right) 
&= \Tr\left((t_1, \tau_1)^* ; H_c^0(\dot w \bT_r^\sigma)\right) \\
&= 
\begin{cases}
\# \breve T_r^\sigma & \text{if $\tau_1 = {\rm Ad}(w^{-1})(t_1^{-1})$}, \\
0 & \text{otherwise,}
\end{cases}
\end{align*}
and Theorem \ref{thm:traces} now follows from \eqref{eq:comp_traces_DL_formula}.
\end{proof}

\bibliography{bib_ADLV_CC}{}
\bibliographystyle{alpha}

\end{document}